\documentclass[11pt]{amsart}
\usepackage[english]{babel}
\usepackage[utf8]{inputenc}

\usepackage[inner=1.1in, outer=1.1in, bottom = 2.9cm, top = 2.9cm]{geometry}


\pretolerance=10000
\tolerance=2000 
\emergencystretch=10pt



\usepackage{amsmath, amssymb, amsfonts, amsthm}
\usepackage{graphicx, overpic}
\usepackage{mathrsfs}
\usepackage{mathtools}
\usepackage[usenames,dvipsnames,svgnames,table]{xcolor}
\usepackage{enumerate}
\usepackage{enumitem}
\usepackage{shuffle} 
\usepackage{lipsum}
\usepackage{color}
\usepackage{eucal}
\usepackage{phoenician}
\usepackage{xkeyval}
\usepackage{tikz}
\usepackage{pst-node} 
\usepackage{tikz-cd} 
\usepackage[OT2,T1]{fontenc} 
\usepackage{thm-restate} 
\usepackage{float}
\usepackage{microtype}
\usepackage[strict]{changepage} 
\usepackage{esvect}
\usepackage{etoolbox}
\usepackage{mathrsfs}
\usepackage{wrapfig}
\usepackage{caption}
\usepackage{cite}
\usepackage{mathrsfs}
\DeclareMathAlphabet{\mathpzc}{OT1}{pzc}{m}{it}
\usepackage{bbding}
\usepackage{array}
\usepackage{makecell}
\usepackage{stmaryrd}
\usepackage{lmodern}
\usepackage{amsbsy}
\usepackage{esvect}
\usepackage{bbding}
\usepackage{thm-restate}
\usepackage[toc,page, titletoc,title]{appendix}
\graphicspath{ {figures/} }
\usepackage{url}
\usepackage[pagebackref]{hyperref} 

\makeatletter
\providecommand*{\twoheadrightarrowfill@}{%
  \arrowfill@\relbar\relbar\twoheadrightarrow
}
\providecommand*{\twoheadleftarrowfill@}{%
  \arrowfill@\twoheadleftarrow\relbar\relbar
}
\providecommand*{\xtwoheadrightarrow}[2][]{%
  \ext@arrow 0579\twoheadrightarrowfill@{#1}{#2}%
}
\providecommand*{\xtwoheadleftarrow}[2][]{%
  \ext@arrow 5097\twoheadleftarrowfill@{#1}{#2}%
}
\makeatother
\makeatletter
\newcommand*{\relrelbarsep}{.386ex}
\newcommand*{\relrelbar}{%
  \mathrel{%
    \mathpalette\@relrelbar\relrelbarsep
  }%
}
\newcommand*{\@relrelbar}[2]{%
  \raise#2\hbox to 0pt{$\m@th#1\relbar$\hss}%
  \lower#2\hbox{$\m@th#1\relbar$}%
}
\providecommand*{\rightrightarrowsfill@}{%
  \arrowfill@\relrelbar\relrelbar\rightrightarrows
}
\providecommand*{\leftleftarrowsfill@}{%
  \arrowfill@\leftleftarrows\relrelbar\relrelbar
}
\providecommand*{\xrightrightarrows}[2][]{%
  \ext@arrow 0359\rightrightarrowsfill@{#1}{#2}%
}
\providecommand*{\xleftleftarrows}[2][]{%
  \ext@arrow 3095\leftleftarrowsfill@{#1}{#2}%
}
\makeatother

\DeclareSymbolFont{cyrletters}{OT2}{wncyr}{m}{n}
\DeclareMathSymbol{\Sh}{\mathalpha}{cyrletters}{"58}

\usetikzlibrary{tikzmark,decorations.pathreplacing}
\usetikzlibrary{shapes,shadows,arrows}
\usetikzlibrary{decorations.markings}
\usetikzlibrary{positioning,calc}
\tikzset{near start abs/.style={xshift=1cm}}


\hypersetup{
	colorlinks=true,
	linkcolor=black,
	filecolor=black,      
	urlcolor=blue,
	citecolor= 	red,
}
\urlstyle{same}
\DeclareSymbolFont{extraup}{U}{zavm}{m}{n}
\DeclareMathSymbol{\varheart}{\mathalpha}{extraup}{86}
\DeclareMathSymbol{\vardiamond}{\mathalpha}{extraup}{87}

\newcommand{\bigslant}[2]{{\raisebox{.2em}{$#1$}\left/\raisebox{-.2em}{$#2$}\right.}}

\theoremstyle{definition}
\newtheorem{thm}{Theorem}[section]
\newtheorem{cor}{Corollary}[thm]

\newtheorem{prop}[thm]{Proposition}

\theoremstyle{definition}
\newtheorem{definition}{Definition}[section]
\newtheorem{ex}{Example}[section]

\newtheorem{remark}{Remark}[section]






\newcommand{\bR}{\mathbb{R}}



\newcommand{\cS}{\CMcal{S}}
\newcommand{\cA}{\CMcal{A}}
\newcommand{\cT}{\CMcal{T}}



\newcommand{\tH}{\mathtt{H}}



\newcommand{\Hom}{\operatorname{Hom}}

\newcommand{\Lie}{ \textbf{Lie} }

\newcommand{\Ass}{\textbf{Ass}}
\newcommand{\Lay}{\textbf{Lay}}

\newcommand{\oS}{\boldsymbol{\Gamma}}

\newcommand{\comma}{\text{,}}


\newcommand{\la}{\langle}
\newcommand{\ra}{\rangle}




\definecolor{Red}{rgb}{0.8,0,0.2}


\newcommand{\GG}[1]{}

\makeatletter
\def\@footnotecolor{red}
\define@key{Hyp}{footnotecolor}{%
 \HyColor@HyperrefColor{#1}\@footnotecolor%
}
\def\@footnotemark{%
    \leavevmode
    \ifhmode\edef\@x@sf{\the\spacefactor}\nobreak\fi
    \stepcounter{Hfootnote}%
    \global\let\Hy@saved@currentHref\@currentHref
    \hyper@makecurrent{Hfootnote}%
    \global\let\Hy@footnote@currentHref\@currentHref
    \global\let\@currentHref\Hy@saved@currentHref
    \hyper@linkstart{footnote}{\Hy@footnote@currentHref}%
    \@makefnmark
    \hyper@linkend
    \ifhmode\spacefactor\@x@sf\fi
    \relax
  }%
\makeatother

\hypersetup{footnotecolor=blue}

\makeatletter
\patchcmd{\@startsection}
  {\@afterindenttrue}
  {\@afterindentfalse}
  {}{}
\makeatother

\makeatletter
\newcommand*\Cdot{\mathpalette\Cdot@{.5}}
\newcommand*\Cdot@[2]{\mathbin{\vcenter{\hbox{\scalebox{#2}{$\m@th#1\bullet$}}}}}
\makeatother

\title[The Adjoint Braid Arrangement as a Lie Algebra]{The Adjoint Braid Arrangement as a Combinatorial\\ Lie Algebra via the Steinmann Relations}
\author{Zhengwei Liu}
\address[Zhengwei Liu]{Harvard University}
\email{zhengweiliu@fas.harvard.edu}
\author{William Norledge}
\address[William Norledge]{Pennsylvania State University and Harvard University}
\email{wxn39@psu.edu}
\author{Adrian Ocneanu}
\address[Adrian Ocneanu]{Pennsylvania State University and Harvard University}
\email{axo2@psu.edu}
\keywords{Restricted all-subset arrangement, resonance arrangement, trees with levels, category of lunes, Lie element, species, Lie algebra, Steinmann relations, generalized retarded function, axiomatic quantum field theory}
\thanks{Supported by the Templeton Religion Trust grant TRT 0159 for the Mathematical Picture Language Project at Harvard University. The Harvard course of Adrian Ocneanu described work done by him at Penn State University, 1990-2017, partly supported by NSF grants DMS-9970677, DMS-0200809, DMS-0701589.}

\begin{document}

\renewcommand{\chapterautorefname}{Chapter}
\renewcommand{\sectionautorefname}{Section}
\renewcommand{\subsectionautorefname}{Section}

\begin{abstract}
We study a certain discrete differentiation of piecewise-constant functions on the adjoint of the braid hyperplane arrangement, defined by taking finite-differences across hyperplanes. In terms of Aguiar-Mahajan's Lie theory of hyperplane arrangements, we show that this structure is equivalent to the action of Lie elements on faces. We use layered binary trees to encode flags of adjoint arrangement faces, allowing for the representation of certain Lie elements by antisymmetrized layered binary forests. This is dual to the well-known use of (delayered) binary trees to represent Lie elements of the braid arrangement. The discrete derivative then induces an action of layered binary forests on piecewise-constant functions, which we call the forest derivative. Our main result states that forest derivatives of functions factorize as external products of functions precisely if one restricts to functions which satisfy the Steinmann relations, which are certain four-term linear relations appearing in the foundations of axiomatic quantum field theory. We also show that the forest derivative satisfies the Lie properties of antisymmetry the Jacobi identity. It follows from these Lie properties, and also crucially factorization, that functions which satisfy the Steinmann relations form a left comodule of the Lie cooperad, with the coaction given by the forest derivative. Dually, this endows the adjoint braid arrangement modulo the Steinmann relations with the structure of a Lie algebra internal to the category of vector species. This work is a first step towards describing new connections between Hopf theory in species and quantum field theory. 
\end{abstract}

\maketitle

\setcounter{tocdepth}{1} 
\tableofcontents

\section*{Introduction} \label{sec:Introduction}

The (essentialized) braid arrangement $\text{Br}[I]$ over a finite set $I$ is the hyperplane arrangement with ambient space 
\[
\text{T}[I]:= \bR^I/\bR =  \bigslant{   \{    \text{functions}\ I \to \bR  \} }{   \text{translations} }   
\]
and hyperplanes the fat diagonal
\[
\{ x_{i_1}-x_{i_2}=0  \}  \subset \text{T}[I]
\qquad
\text{for injective} \quad 
(i_1,i_2)\in I^2  
.\]
This is the reflection hyperplane arrangement of the type $A$ root system. In \cite{aguiar2010monoidal}, \cite{aguiar2013hopf}, Aguiar-Mahajan lay out rich connections between this hyperplane arrangement and combinatorial Hopf theory. They develop Hopf theory internal to the category of Joyal's vector species,\footnote{\ The theory of species was developed by André Joyal as the category-theoretic analog of analyzing combinatorial structures in terms of generating functions \cite{joyal1981theorie}, \cite{joyal1986foncteurs}. See also \cite{bergeron1998combinatorial}.} clarifying and generalizing the work of Barratt \cite{barratt1978twisted}, Joyal \cite{joyal1986foncteurs}, Schmitt \cite{Bill93}, Stover \cite{stover1993equivalence}, and others. Working internal to species provides a clear and unified perspective on combinatorial Hopf theory, with the plethora of graded Hopf algebras which appear in the literature being recovered via certain monoidal functors from vector species into graded vector spaces \cite[Chapter 15]{aguiar2010monoidal}. Aguiar-Mahajan emphasize the role which is played by the braid arrangement in providing consistent geometric interpretations of Hopf theory in Joyal's vector species. They went on to develop many aspects of the theory over generic real hyperplane arrangements \cite{aguiar2017topics}, \cite{aguiar2020bimonoids}.

The adjoint\footnote{\ in the sense of \cite[Section 1.9.2]{aguiar2017topics}} of the braid arrangement $\text{Br}^\vee[I]$ is the hyperplane arrangement with ambient space 
\[
\text{T}^\vee[I]:=  \Hom( \bR^I/\bR, \bR )\  =\   \text{sum-zero hyperplane of }\bR I 
\]
and hyperplanes
\[
\bigg\{ \sum_{i\in S} x_i=\sum_{i\in T} x_i=0\bigg  \}  \subset   \text{T}^\vee[I]
\qquad \text{for surjective} \quad 
  (S,T)\in 2^I
.\footnote{\ $(S,T)$ denotes the surjective function $I\to 2=\{1,2\}$ such that $S$ and $T$ are the preimages of $1$ and $2$ respectively}\]
This is the dual root space of type $A$, equipped with those hyperplanes which can be spanned by coroots. In this paper, we develop theory for the adjoint braid arrangement which gives rise to algebraic structure internal to Joyal's species. In particular, we obtain a new and highly structured geometric realization of one of the central Lie algebras appearing in Aguiar-Mahajan's work. This construction exposes connections between Hopf theory in species and quantum field theory, which are described in \cite{norledge2019hopf}, \cite{norledge2020species}.

We begin by studying the discrete differentiation of functions on faces of the adjoint braid arrangement, defined by taking \hbox{finite-differences} across hyperplanes. Originally, our motivation for understanding this structure came from studying permutohedral cones, see \cite{norledge2019hopf}. To describe the discrete derivative, let us first recall some basic aspects of the adjoint braid arrangement. For $P=(  S_1| \dots| S_k)$ a partition of $I$, the subspace
\[    
\text{T}^\vee[P] 
:=
\Big \{     (x_i)_{i\in I} \in \bR I   \ \Big | \  \sum_{i\in S_j} x_{i} =0 \ \text{ for all } \  1\leq j \leq k  \Big  \} 
\]
is a flat of the adjoint braid arrangement. We have a natural isomorphism
\[
 \text{T}^\vee[S_1]\times \dots \times \text{T}^\vee[S_k] \cong \text{T}^\vee[P]
.\]
We call such flats \emph{semisimple}, in reference to this factorization structure. The adjoint braid arrangement \emph{under} $\text{T}^\vee[P]$, denoted $\textsf{Br}^\vee[P]$, is the hyperplane arrangement with ambient space $\text{T}^\vee[P]$ and consists of those hyperplanes of $\text{T}^\vee[P]$ which are also flats of the adjoint braid arrangement. In general, $\textsf{Br}^\vee[P]$ almost has the structure of its corresponding product of arrangements, apart from the fact some extra `bad' hyperplanes are added, thus
\[
\text{Br}^\vee[S_1]   \times \dots \times \text{Br}^\vee[S_k]   \ncong \textsf{Br}^\vee[P]
.\]
The hyperplanes of $\textsf{Br}^\vee[P]$ which come from the product are exactly those which are semisimple, and the bad hyperplanes come from the fact that semisimple flats are not closed under intersection. See \autoref{subsec:The Restricted All-subset Arrangement} for more details. 

Let $\text{L}^\vee[I]$, resp. $\textsf{L}^\vee[P]$, denote the set of chambers of $\text{Br}^\vee[I]$, resp. $\textsf{Br}^\vee[P]$. For $\Bbbk$ a field of characteristic zero, let
\[
{\textbf{\text{L}}^\vee}^\ast[I]   :=   \big \{   \text{functions }   f:\text{L}^\vee[I]\to \Bbbk  \big \}
\qquad \text{and} \qquad
{\textbf{\textsf{L}}^\vee}^\ast[P]   :=   \big \{   \text{functions }   f:\textsf{L}^\vee[P]\to \Bbbk  \big \}
.\] 
The discrete differentiation of functions on chambers across the hyperplane of $(S,T)\in 2^I$ is formalized as the linear map
\[
\boldsymbol{\partial}_{[S,T]}  :{\textbf{\text{L}}^\vee}^\ast[I] \to {\textbf{\textsf{L}}^\vee}^\ast\big [S|T\big ]    
,\qquad
f \mapsto \boldsymbol{\partial}_{[S,T]} f 
\]
given by
\[       
\boldsymbol{\partial}_{[S,T]} f (\mathtt{X}):=  f(\mathtt{X}^{[S,T]})- f(\mathtt{X}^{[T,S]}),\qquad \mathtt{X}\in  \textsf{L}^\vee\big [S|T\big ]
\]
where $\mathtt{X}^{[S,T]}$ and $\mathtt{X}^{[T,S]}$ denote the two chambers of the adjoint braid arrangement which are adjacent to the codimension one face $\mathtt{X}$. We now pick a second hyperplane, corresponding to say $(U,V)\in 2^I$. Suppose that this second hyperplane is `non-overlapping'\footnote{\ this terminology comes from quantum field theory} in the sense that one of
\[     
S\cap U  \qquad \text{and}  \qquad      T\cap U 
\]
is empty (we elaborate on this non-overlapping property in \autoref{sec:$R$-Semisimplicity}). Then, differentiating for a second time as above but now taking finite-differences across the hyperplane of $(U,V)$, we obtain a linear map
\[
\boldsymbol{\partial}_{ (  [S_1,S_2]\, |\, [T] )}    :{\textbf{\textsf{L}}^\vee}^\ast\big [S|T\big ]     \to {\textbf{\textsf{L}}^\vee}^\ast\big [     S_1 | S_2 |T   \big ]    
\] 
where without loss of generality we have assumed $T\cap U=\emptyset$, and put $S_1=S\cap U$, $S_2=S\cap V$. In this way, non-overlapping iterations of our derivative are naturality encoded by layered binary forests, with each node of the forest corresponding to taking a derivative across a single hyperplane, see \autoref{sec:forestderiv}.  We call this the forest derivative. In full generality, the forest derivative is formalized as linear maps
\[
\boldsymbol{\partial}_{\mathcal{F}}  : {\textbf{\textsf{L}}^\vee}^\ast\big [P] \to {\textbf{\textsf{L}}^\vee}^\ast\big [Q]
\]
where $Q$ is a finer partition than $P$, and $\mathcal{F}$ is any appropriate layered binary forest, see \autoref{def:deriv}. 

Using the theory of Lie elements for generic real hyperplane arrangements \cite[Chapters 4 and 10]{aguiar2017topics}, the (dual of the) forest derivative can be obtained by representing certain Lie elements of the adjoint braid arrangement with layered binary forests, and then letting Lie elements act on faces. This perspective is described in \autoref{sec:lunes}. 

\begin{figure}[t]
	\centering
	\includegraphics[scale=0.75]{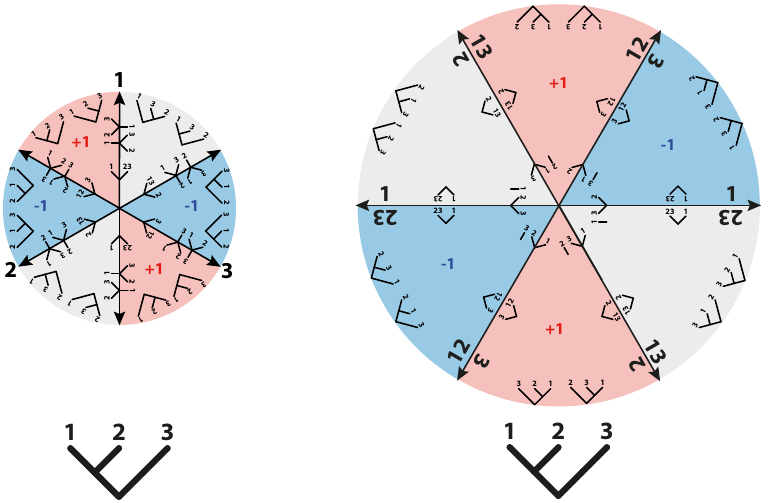}
	\caption{The braid and adjoint braid arrangements on $I=\{1,2,3\}$, \hbox{decorated} with a schematic for the action of the category of partitions on faces, which \hbox{interprets} layered binary trees as flags of faces. By antisymmetrizing trees, these actions allow us to use layered binary trees to represent Lie elements of both arrangements. Classically, this is only done for the braid arrangement (and with delayered trees) \cite{garsia90}, \cite{reutenauer}. The Lie elements represented by the tree $[[1,2],3]$ are shown.}
	\label{fig:action}
\end{figure} 

In \autoref{thm:lie}, we observe that this derivative has antisymmetry and satisfies the Jacobi identity, as interpreted on trees in the usual way, e.g. \cite{reutenauer}. The Jacobi identity is a consequence of the fact that the geometry of the adjoint braid arrangement imposes the following relations on trees, which can be seen in \autoref{fig:action},
\[   
[[1,2],3]=[[1,3],2] \qquad  \text{and} \qquad  [1,[2,3]]= [2,[1,3]]
.\] 
See \autoref{sec:lieprop} for details. Therefore we suspect a Lie (co)algebra is lurking somewhere. However, the objects we are dealing with are not single vector spaces; rather, they are whole families of vector spaces, one for each finite set $I$, or one for each partition $P$ of $I$, and they come equipped with an action of the symmetric group on $I$. In other words, they are \emph{vector species}. Therefore we should consider Lie (co)algebras internal to vector species.


Since a single derivative is with respect to a choice of $(S,T)\in 2^I$, this suggests we should consider positive Lie coalgebras constructed with respect to the so-called Cauchy monoidal product \cite[Definition 8.5]{aguiar2010monoidal}. Indeed, such an internal Lie coalgebra $\textbf{g}$ consists of a family of linear maps
\[
\textbf{g}[I] \to \textbf{g}[S] \otimes  \textbf{g}[T]
,\]
one for each choice of $(S,T)\in 2^I$. Now, since every hyperplane of the product \hbox{$\text{Br}^\vee[S_1]   \times \dots \times \text{Br}^\vee[S_k]$} is also a hyperplane of $\textsf{Br}^\vee[P]$, we have an external product of functions
\[  
\text{prod}_P: {\textbf{\text{L}}^\vee}^\ast[S_1]\otimes \dots \otimes {\textbf{\text{L}}^\vee}^\ast[S_k]  \hookrightarrow   {\textbf{\textsf{L}}^\vee}^\ast[P]  
\] 
with image those functions which do not change values across bad hyperplanes, see \autoref{def:prod}. We call a function $f\in {\textbf{\textsf{L}}^\vee}^\ast[P]$ \emph{semisimple} if it is contained in the image of this product. For example, in \autoref{fig:rootsolid} a semisimple function $f\in  {\textbf{\textsf{L}}^\vee}^\ast[12|34]$ must satisfy $f(\mathtt{X})=f(\mathtt{X}')$. 

We then obtain a would-be Lie cobracket by pulling back the derivative along the external product. Let $\boldsymbol{\Gamma}^\ast[P]$ denote the subspace of ${\textbf{\textsf{L}}^\vee}^\ast[P]$ consisting of \emph{semisimply differentiable} functions, which are functions whose non-overlapping iterated derivatives $\boldsymbol{\partial}_{\mathcal{F}} f$ are all semisimple. By restricting the derivative to semisimply differentiable functions on chambers $\boldsymbol{\Gamma}^\ast[I]:=\boldsymbol{\Gamma}^\ast[(I)]$, we obtain maps
\[
\boldsymbol{\partial}_{[S,T]} : \boldsymbol{\Gamma}^\ast[I] \to  \boldsymbol{\Gamma}^\ast\big [S|T\big ]
.\]
The fact that this restricted derivative does indeed land in $\boldsymbol{\Gamma}^\ast\big [S|T\big ]$ is a consequence of the fact that the derivative of an external product of functions factorizes as an external product of derivatives, see \autoref{prop:delayer}. In \autoref{cor1}, we see that the restricted external product is an isomorphism
\[
\text{prod}_P : \boldsymbol{\Gamma}^\ast [S_1] \otimes \dots \otimes  \boldsymbol{\Gamma}^\ast [S_k] \xrightarrow{\sim} \boldsymbol{\Gamma}^\ast [P]
.\] 
Thus, we obtain a Lie cobracket given by
\[
\partial_{[S,T]}   : \boldsymbol{\Gamma}^\ast[I] \to  \boldsymbol{\Gamma}^\ast\big [S]   \otimes   \boldsymbol{\Gamma}^\ast\big [T]  
,\qquad
\partial_{[S,T]} := \text{prod}_{(S|T)}^{-1}  \circ \boldsymbol{\partial}_{[S,T]}  
.\]
This gives $\boldsymbol{\Gamma}^\ast$ the structure of a Lie coalgebra internal to vector species. In this paper, we mainly consider the corresponding coaction of the Lie cooperad, which involves the iterated derivatives with respect to an arbitrary tree or forest, see \autoref{sec:coalg}. 

In \autoref{sec:The Steinmann Relations}, we observe that the property of a function's \emph{first} derivatives $\boldsymbol{\partial}_{[S,T]}f$ alone being semisimple is equivalent to the function satisfying certain four-term relations. These relations have previously appeared in the foundations of axiomatic quantum field theory, under the name Steinmann relations \cite{steinmann1960zusammenhang}, \cite[Equation 44]{steinmann1960}, \cite[p.827-828]{streater1975outline}. This connection is very interesting because the Steinmann relations were originally defined in a very different context to the current one, devoid of (explicit) \hbox{species-theoretic} considerations. In this paper, we arrive at the Steinmann relations by first observing Lie properties, and then restricting the ambient structure in an effort to obtain a Lie coalgebra. 

Our main result \autoref{main} states that if a function's first derivatives are semisimple (equivalently the function satisfies the Steinmann relations), then the function is semisimply differentiable. One can consider the functions which satisfy the Steinmann relations as differentiable functions, and semisimply differentiable functions as smooth functions. Then \autoref{main} is an analog of the result in complex analysis that a differentiable function is analytic. 

Therefore the Lie coalgebra $\boldsymbol{\Gamma}^\ast$ may be characterized as the subspace of functions in ${\textbf{\text{L}}^\vee}^\ast$ which satisfy the Steinmann relations. In other words, restricting to functions which satisfy the Steinmann relations is both necessary and sufficient in order for our discrete derivative to factorize and thus define a Lie cobracket. In the language of \cite{aguiar2020bimonoids}, the Steinmann relations are precisely what is required in order to move from a less structured \hbox{non-cartesian} setting to a more structured cartesian setting. In this way, our paper provides a pure mathematical `explanation' of the Steinmann relations.


\begin{figure}[t]
	\centering
	\includegraphics[scale=0.8]{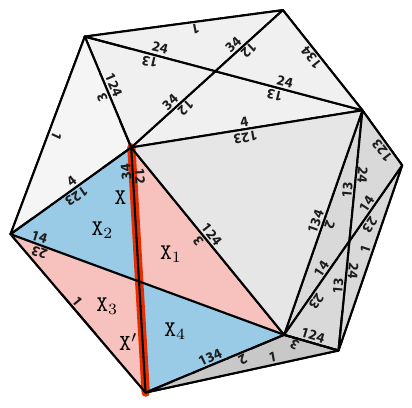}
	\caption{The intersection of the adjoint braid arrangement on $I=\{1,2,3,4\}$ with the coroot polytope of type $A_3$. Here, $\mathtt{X}$ and $\mathtt{X}'$ are codimension one faces contained in the flat corresponding to the partition $(12|34)$, and $\mathtt{X}_1$, $\mathtt{X}_2$, $\mathtt{X}_3$, $\mathtt{X}_4$ are chambers. The would-be Lie brackets of $\mathtt{X}$ and $\mathtt{X}'$ are $\boldsymbol{\partial}^\ast_{[12,34]} \mathtt{X}  =\mathtt{X}_1-\mathtt{X}_2$ and $\boldsymbol{\partial}^\ast_{[12,34]} \mathtt{X}'=\mathtt{X}_4-\mathtt{X}_3$ (see \autoref{sec:coalg}). However, so that the flat of $(12|34)$ factorizes like type $A_1 \times A_1$, the faces $\mathtt{X}$ and $\mathtt{X}'$ must be identified. Therefore we must have $\mathtt{X}_1-\mathtt{X}_2=\mathtt{X}_4-\mathtt{X}_3$, and so $\mathtt{X}_1-\mathtt{X}_2+\mathtt{X}_3-\mathtt{X}_4=0$, which is called a Steinmann relation.}
	\label{fig:rootsolid}
\end{figure}

We now discus some connections to Hopf theory in species. If we let $\oS[I]$ denote the quotient of $\textbf{L}^\vee[I]:=\Bbbk \text{L}^\vee [I]$ by the Steinmann relations, than dually we obtain a Lie algebra
\[
\partial^\ast_{[S,T]}:\oS[S] \otimes \oS[T] \to \oS[I]
,\qquad \partial^\ast_{[S,T]}:= (\partial_{[S,T]})^\ast
.\]
It is shown in \cite{norledge2019hopf} that $\oS$ is isomorphic to the free Lie algebra on the positive exponential species
\[
\oS \cong \CMcal{L}ie(\textbf{E}^\ast_+)=\textbf{Zie}
.\]
See \cite[Section 11.9]{aguiar2010monoidal}. We adopt the notation `\textbf{Zie}' from \cite{aguiar2017topics}. Thus, the universal enveloping algebra of $\oS$ is the cocommutative Hopf monoid of set compositions $\boldsymbol{\Sigma}$, which is defined in \cite[Section 11.1]{aguiar2013hopf}. The algebra $\boldsymbol{\Sigma}$ is fundamental to Hopf theory in species, and is closely related to the Tits algebra of the braid arrangement. The Tits product is recovered as the action of $\boldsymbol{\Sigma}$ on itself by Hopf powers \cite[Section 13]{aguiar2013hopf}. Moreover, it is shown in \cite{norledge2019hopf} that its dual Hopf algebra, the commutative Hopf monoid of set compositions $\boldsymbol{\Sigma}^\ast$, may be geometrically realized as an algebra of piecewise-constant functions on the adjoint braid arrangement which is spanned by characteristic functions of (generalized) permutohedral tangent cones. In particular, the multiplication of $\boldsymbol{\Sigma}^\ast$ is induced by embedding adjoint braid arrangement hyperplanes, and so the indecomposable quotient $\boldsymbol{\Sigma}^\ast\twoheadrightarrow \textbf{Zie}^\ast$ is obtained geometrically by restricting functions to open chambers. This explains why in this paper we are able to construct $\textbf{Zie}$ over the adjoint braid arrangement chambers, and also the appearance of signed quasi-shuffles in Ocneanu's relations for permutohedral cones \cite[\href{https://www.youtube.com/watch?v=mqHXVDwTGlI}{Lecture 36}]{oc17}. See also \cite{early2017canonical}.


The standard homomorphism $\boldsymbol{\Sigma}^\ast \twoheadrightarrow \textbf{L}^\ast$\footnote{\ $\cT (\varsigma)$ in \cite[Theorem 12.57]{aguiar2010monoidal}} is obtained geometrically on the adjoint braid arrangement by quotienting out functions which are constant in one or more coroot directions. For the usual geometric realization of $\boldsymbol{\Sigma}^\ast$ on the braid arrangement, the roles played by the geometry is vice versa; over the braid arrangement, $\boldsymbol{\Sigma}^\ast\twoheadrightarrow \textbf{Zie}^\ast$ manifests as quotienting out functions which are constant in one or more fundamental weight directions, and $\boldsymbol{\Sigma}^\ast \twoheadrightarrow \textbf{L}^\ast$ is the restriction of functions to open Weyl chambers. In particular, the braid arrangement realization of $\textbf{Zie}$, the so-called Zie elements \cite[Section 10.3]{aguiar2017topics}, is spread across faces of all codimensions, whereas the adjoint braid arrangement realization of $\textbf{Zie}$, i.e. $\oS$, exists purely on chambers. This switching of the correspondence between algebra and geometry occurs because on the braid arrangement, arrangements over flats factorize, whereas on the adjoint braid arrangement, it is the arrangements under flats which factorize.


Structures related to the Lie algebra $\oS$ and the adjoint braid arrangement have appeared in quantum field theory, see e.g. \cite{Huz1}, \cite{Ruelle}, \cite{epstein1976general}, \cite{epstein2016}, \cite{evans1992n}, \cite{evans1994being}. \hbox{Up-operators}\footnote{\ see \cite[Section 8.12]{aguiar2010monoidal}} on the species of the adjoint braid arrangement, called the Steinmann arrows, play a central role in the algebraic formalism developed in \cite{egs74}, \cite{epstein1976general} for the study of generalized retarded functions, see \autoref{arrow}. These up-operators may be extended to $\boldsymbol{\Sigma}$, and are used to obtain the (operator products of time-ordered products of) interacting field observables of a perturbative S-matrix scheme \cite[p. 259-261]{ep73roleofloc}. The primitive elements map $\oS\hookrightarrow \boldsymbol{\Sigma}$, which embeds the adjoint braid arrangement realization of $\textbf{Zie}$ into its universal enveloping algebra, appears in \cite[Equation 79, p. 260]{ep73roleofloc}, \cite[Equation 1, p.26]{epstein1976general}, \cite[Equations 35, 36]{epstein2016}. The map is given in terms of the $\tH$-basis of $\boldsymbol{\Sigma}$. We hope to further expose the connection between species and quantum field theory in future work. \medskip

\subsection*{Acknowledgments.} The authors are grateful to the Templeton Religion Trust, which supported this research with grant TRT 0159 for the Mathematical Picture Language Project at Harvard University. This made possible the visiting appointment of Adrian Ocneanu and the postdoctoral fellowship of William Norledge for the academic year 2017-2018. Adrian Ocneanu wants to thank Penn State for unwavering support during decades of work, partly presented for the first time during his visiting appointment. We also thank Nick Early for discussions related to permutohedral cones and the Steinmann relations \cite{early2017canonical}, \cite{early2017generalized}. After a literature search, Early discovered that the relations, which were conjectured by Ocneanu to characterize the span of characteristic functions of permutohedral cones, were known in axiomatic quantum field theory as the Steinmann relations. Zhengwei Liu would like to thank Arthur Jaffe for many helpful suggestions.\medskip 

This paper was inspired by a lecture course given by Adrian Ocneanu at Harvard University in the fall of 2017 \cite{oc17}. The lecture course is available on YouTube (see \href{https://www.youtube.com/watch?v=CJR-QXHP5GQ&list=PLdex6rFe0dfQRKLmcGC0rUrsswjHd01FS}{video playlist}). 

\section{The Adjoint Braid Arrangement}\label{subsec:The Restricted All-subset Arrangement}

\noindent \hyperlink{foo}{In} this section, we define the adjoint braid arrangement and describe some of its basic aspects. In particular, we emphasize the flats which are spanned by subsets of coroots. These flats are indexed by set partitions. We give a combinatorial description of both orthogonal projections of faces, and its linear dual, which is an external product of functions on faces. We show that by taking a certain quotient of faces, or dually by restricting to certain functions, projections and products become bijections. This gives a product structure on the adjoint braid arrangement under flats which are spanned by subsets of coroots. This product structure is required in order to obtain algebraic structure internal to Joyal's species.\footnote{\ see \cite[Preface, Differences]{aguiar2020bimonoids}}

\subsection{Flats of the Adjoint Braid Arrangement}


Let $I$ be a finite set with cardinality $n$. Let
\[      \bR^I:=\{  \text{functions}\     \lambda: I\to \bR    \} .     \]
For any subset $S\subseteq I$, let $\lambda_S\in \bR^I$ be given by
\[\lambda_S(i):=1 \quad \text{if} \quad  i\in S \qquad \text{and} \qquad \lambda_S(i):=0 \quad \text{if} \quad i\notin S.\] 
We have the free vector space on $I$, 
\[
\bR I:=\{  h=(h_i)_{i\in I} : h_i\in \bR   \}
.\] 
We have the perfect pairing
\[ \la-, - \ra:\bR I\times \bR^I\to \bR, \qquad     ( h, \lambda )\mapsto   \la h, \lambda  \ra := \sum_{i\in I}h_i \,  \lambda(i)    . \]
Let $\text{T}^\vee[I]$ denote the sum-zero hyperplane of $\bR I$,
\[\text{T}^\vee[I]  :=\big \{  h\in \bR I  :\la h, \lambda_I\ra =0    \big   \}. \]
For $i_1,i_2 \in I$ with $i_1\neq i_2$, the \emph{coroot} $h_{i_1 i_2}\in \text{T}^\vee[I]$ is defined by
\[\la h_{i_1 i_2},\lambda\ra:=\lambda(i_1)-\lambda(i_2).\]  
Then $\text{T}^\vee[I]$, together with the coroots $h_{i_1 i_2}$, forms the dual root system of type $A_{n-1}$.

\begin{definition}
A \emph{semisimple flat} is an $\bR$-linear subspace of $\text{T}^\vee[I]$ which can be spanned by coroots. 
\end{definition}

A \emph{partition} $P=( S_1|\dots| S_k )$ of $I$ is an (unordered) set of disjoint nonempty \emph{blocks} $S_j\subseteq I$ whose union is $I$. For partitions $P$ and $Q$ of $I$, we say that $Q$ is \emph{finer} than $P$ if every block of $Q$ is a subset of some block of $P$. We associate to each partition $P=( S_1| \dots| S_k)$ of $I$ the semisimple flat $\text{T}^\vee[P]$ which is given by
\[         
\text{T}^\vee[P]:=\big \{  h\in \bR I  : \lambda_{S_j}(h)=0\ \text{ for all }\ 1\leq j\leq k     \big \}  
.       \] 
Let $\sim_P$ denote the equivalence relation on $I$ which says that elements are equivalent if they are in the same block of $P$. We have
\[             \text{T}^\vee[P]=  \big  \la     h_{i_1 i_2}  : i_1 \sim_P i_2     \big \ra          .  \footnote{\ the angled brackets denote the $\bR$-linear span}   \]
Thus, $\text{T}^\vee[P]$ is indeed a semisimple flat. Conversely, for each semisimple flat $V\subseteq  \text{T}^\vee[I]$ there exists a unique partition $P$ of $I$ such that $V=\text{T}^\vee[P]$. The partition $P$ corresponds to the equivalence relation
\[i_1\sim i_2 \qquad \text{if} \quad    h_{i_1 i_2} \in V \ \  \text{or} \ \   i_1=i_2 .\] 
Therefore semisimple flats of $\text{T}^\vee[I]$ are in one-to-one correspondence with partitions of $I$. For partitions $P$ and $Q$ of $I$, $\text{T}^\vee[Q]$ is a subspace of $\text{T}^\vee[P]$ if and only if $Q$ is finer than $P$.  

Given a subset $S\subseteq I$, let $^{I\! } S$ denote the partition of $I$ which is the completion of $S$ with singletons, i.e. $^{I\! } S$ has $S$ as a block, and all its other blocks are singletons. A \emph{simple flat} is a semisimple flat of the form $\text{T}^\vee[^{I\! } S]$ for $|S|\geq 2$. We have a natural isomorphism 
\[
\text{T}^\vee[^{I\! } S]\cong \text{T}^\vee[S]
.\] 
For $P=( S_1| \dots| S_k)$, the semisimple flat $\text{T}^\vee[P]$ orthogonally factorizes into simple flats as follows,
\begin{equation} \label{eq} \tag{$\ast$}
  \text{T}^\vee[P]= \bigoplus_{|S_j|\geq 2}  \text{T}^\vee[^{I\! } S_j]\cong  \bigoplus_{|S_j|\geq 2}  \text{T}^\vee[S_j]   .
\end{equation}
The subspace $\text{T}^\vee[P]$ equipped with the coroots of $\text{T}^\vee[I]$ which are contained in $\text{T}^\vee[P]$ forms the dual root system of type 
\[     A_{ |S_1|-1} \times \dots \times   A_{ |S_k|-1}      .    \]
The factorization of $\text{T}^\vee[P]$ into simple flats is the decomposition of this dual root system into irreducible  dual root systems. 

A \emph{special hyperplane} is a semisimple flat which has codimension one in $\text{T}^\vee[I]$. An \emph{adjoint flat} is a subspace of $\text{T}^\vee[I]$ which is an intersection of a set of special hyperplanes of $\text{T}^\vee[I]$. If $P=( S_1| \dots| S_k)$ and $T_j=I- S_j$,\footnote{\ the minus denotes relative complement} then 
\[\text{T}^\vee[P]   = \bigcap^k_{j=1} \text{T}^\vee[S_j| T_j].\] 
Therefore every semisimple flat is an adjoint flat. However, the set of semisimple flats is not closed under intersection, and so there exist adjoint flats which are not semisimple.

\begin{definition}
The \emph{adjoint braid arrangement} $\mathrm{Br}^\vee[I]$ on $I$ is the hyperplane arrangement consisting of the special hyperplanes in $\text{T}^\vee[I]$. 
\end{definition}

The arrangement $\mathrm{Br}^\vee[I]$ is the adjoint of the more famous braid arrangement $\mathrm{Br}[I]$ (the adjoint of a real hyperplane arrangement is defined in \cite[Section 1.9.2]{aguiar2017topics}). We define $\mathrm{Br}[I]$ in \autoref{sec:lunes}. The adjoint braid arrangement also goes by the names the restricted all-subset arrangement \cite{kamiya2010ranking}, \cite{kamiya2012arrangements}, \cite{billera2012maximal}, \cite[Section 6.3.12]{aguiar2017topics}, the resonance arrangement \cite{MR2836109}, \cite{cavalierires}, \cite{billerabooleanprod}, \cite{gutekunst2019root}, and the root arrangement \cite{MR3917218}. Its spherical representation is called the Steinmann planet, or Steinmann sphere, in quantum field theory, e.g. \cite[Figure A.4]{epstein2016}, which may be identified with the boundary of the convex hull of coroots. The Steinmann sphere is the adjoint analog of the type $A$ Coxeter complex. 

\begin{definition}
The adjoint braid arrangement \emph{under} $\text{T}^\vee[P]$, denoted $\textsf{Br}^\vee[P]$, is the hyperplane arrangement with ambient space $\text{T}^\vee[P]$, and with hyperplanes the adjoint flats of $\text{T}^\vee[I]$ which are hyperplanes of $\text{T}^\vee[P]$.
\end{definition} 

We let $\textsf{Br}^\vee[^{I\! } S]$ denote the adjoint braid arrangement under $\text{T}^\vee[^{I\! } S]$. A natural isomorphism $\textsf{Br}^\vee[^{I\! } S]\cong \mathrm{Br}^\vee[S]$ is induced by the natural isomorphism of their underlying spaces. 

For $P=(S_1|\dots |S_k)$ a partition of $I$, the hyperplanes of $\textsf{Br}^\vee[P]$ which are semisimple flats of $\text{T}^\vee[I]$ are in natural bijection with the hyperplanes of the arrangements $\mathrm{Br}^\vee[^{I\! } S_j]$, $1\leq j\leq k$. However, if $P$ has at least two blocks which are not singletons, then $\textsf{Br}^\vee[P]$ will have additional hyperplanes which are not semisimple. Therefore (\ref{eq}) does not hold at the level of hyperplane arrangements. 

\subsection{$R$-Semisimplicity}\label{sec:$R$-Semisimplicity}

The hyperplanes of $\textsf{Br}^\vee[P]$ can be measured according to how far from being semisimple they are. In general, a hyperplane $V\subset \textsf{Br}^\vee[P]$ is obtained by choosing some proper and nonempty subset $E\subset I$ which is not a union of blocks of $P$, and then taking the subspace of $\text{T}^\vee[P]$ on which $\lambda_E=0$,
\[   V=\big \{   h\in \text{T}^\vee[P]:  \lambda_E(h)=0    \big  \}    . \] 
Let $P=(S_1|\dots |S_k)$. For any subset $E\subseteq I$, the \emph{reduction} $\text{Redn}_P(E)$ of $E$ with respect to $P$ is obtain by removing all the blocks of $P$ which are contained in $E$,
\[   \text{Redn}_P(E):=         E-  \bigcup_{E\cap S_j=S_j   } S_j.  \] 
Given subsets $E,S\subseteq I$, we write
\[   E=S \mod P  \qquad   \text{if} \qquad \text{Redn}_P (E)=\text{Redn}_P (S) .  \]
Let $E_1, E_2 \subseteq I$ be subsets with 
\[E_1,E_2\neq \emptyset \mod P.\] 
Then, the two hyperplanes of $\text{T}^\vee[P]$ on which $\lambda_{E_1}=0$ or $\lambda_{E_2}=0$ coincide if and only if
\[       E_1=E_2  \mod P    \qquad   \text{or} \qquad    E_1=I-E_2  \mod P   .        \]

\begin{definition}
Let $P=(S_1|\dots|S_k)$ be a partition of $I$, and let $R$ be a partition of $I$ such that $P$ is finer than $R$. Let $E\subseteq I$ be a subset such that 
\[E\neq \emptyset \mod P.\] 
Then $E$, and the hyperplane on which $\lambda_E=0$, are called \emph{$R$-semisimple} if $\text{Redn}_P (E)$ is contained within a single block of $R$. 
\end{definition}

Even though we just say $R$-semisimple, the definition depends on both $P$ and $R$. We think of the partition $R$ as the threshold of allowed `badness'\footnote{\ `badness' in the sense of not factorizing and obstructing species-theoretic algebraic structure} for the hyperplanes of $\textsf{Br}^\vee[P]$. A finer choice of $R$ corresponds to a stricter threshold, with the strictest choice being $R=P$. A hyperplane is $P$-semisimple if and only if it is a semisimple flat of $\text{T}^\vee[I]$. 

\begin{ex}
Let $I=\{1,\dots,9\}$, and let
\[       P=(12|34|56|78|9), \qquad R_1=(12|3456|789), \qquad    R_2=(123456|789)    .        \]
If $E_1=\{3,5,7,8\}$, then $\text{Redn}_P(E_1)=\{3,5 \}$, and $E_1$ is both $R_1$-semisimple and $R_2$-semisimple. In particular, $E_1$ is $R_2$-semisimple despite non-trivially intersecting two blocks of $R_2$. If $E_2=\{1,3,5\}$, then $\text{Redn}_P(E_2)=E_2$, and $E_2$ is $R_2$-semisimple but not $R_1$-semisimple.
\end{ex}

\subsection{Adjoint Faces}

We define an \emph{adjoint face} $\mathtt{X}$ over $I$ to be the characteristic function of a relatively open face\footnote{\ i.e. the relative interior of an intersection of closed halfspaces of the adjoint braid arrangement, with at least one associated halfspace chosen for each hyperplane, see \cite[Section 1.1.3]{aguiar2017topics}} of the adjoint braid arrangement $\mathrm{Br}^\vee[I]$. Let $[I;2]$ denote the set of ordered pairs $(S,T)$ such that 
\[
S\cup T=I,  \qquad S\cap T=\emptyset, \qquad S,T\neq \emptyset
.\] 
We can also think of $[I;2]$ as the set of surjective functions $I\to 2$, where $2$ is the ordinal with two elements. The \emph{signature} $\cS_h$ of a point $h\in \text{T}^\vee[I]$ is given by
\[         \cS_h := \big \{   (S,T)\in  [I;2]  :  \text{ $\lambda_S(h)\geq 0$}\big \} .  \]
If both $(S,T),(T,S)\in \cS_h$, then we say that $(S,T)$ is a \emph{symmetric} element of $\cS_h$. We put \[\cS_\mathtt{X}:=\cS_h\] 
for $h\in \text{T}^\vee[I]$ with $\mathtt{X}(h)=1$ (notice this is well-defined). Thus, $\cS_\mathtt{X}$ encodes on which sides of the special hyperplanes the relatively open face of $\mathtt{X}$ lies on. If $\cS=\cS_\mathtt{X}$, then we also denote $\mathtt{X}$ by $\mathtt{X}_\cS$. 

The following notation is useful; let the \emph{sign} $s_\mathtt{X}$ of an adjoint face $\mathtt{X}$ be the function on proper and nonempty subsets $S\subset I$ which is given by
\[   
 s_\mathtt{X}(S):= 
\begin{cases}
+  &  \quad \text{if } (S,T)\in  \cS_\mathtt{X}\ \text{and}\ (T,S)\notin  \cS_\mathtt{X}  \\
-  &  \quad \text{if }  (T,S)\in  \cS_\mathtt{X}\ \text{and}\ (S,T)\notin  \cS_\mathtt{X}  \\
0  &  \quad \text{if }   (S,T)\ \text{is a symmetric element}.  
\end{cases}
\]

\noindent We call the top dimensional adjoint faces \emph{adjoint chambers}, which are characterized by the property that $\cS_\mathtt{X}$ does not contain symmetric elements. Adjoint chambers correspond to the geometric cells of \cite{epstein1976general}, \cite{epstein2016}, and the corresponding sets $\cS_\mathtt{X}$ are called (or are equivalent to) cells. Note that cells also go by the name maximal unbalanced families. In \cite{billera2012maximal}, it is proved that the number of maximal unbalanced families, and so also the number of adjoint chambers, grows superexponentially with $n=|I|$. This is sequence \href{https://oeis.org/A034997}{A034997} in the OEIS.

The \emph{support} $\text{supp}(\mathtt{X})$ of an adjoint face $\mathtt{X}$ is the adjoint flat given by the following intersection of special hyperplanes,
\[     \text{supp}(\mathtt{X}):=\bigcap_{\substack{\text{symmetric elements}\\ (S,T)\in \cS_\mathtt{X}   }}   \text{T}^\vee[S|T]   .  \]
The support of $\mathtt{X}$ is equivalently the $\bR$-linear span of the relatively open face corresponding to $\mathtt{X}$.


Let $\Bbbk$ be a field of characteristic zero. For $P$ a partition of $I$, we have the vector space of $\Bbbk$-valued functions on the flat $\text{T}^\vee[P]$,
\[\Bbbk^{\text{T}^\vee[P]}:=\big \{ \text{functions}\ \text{T}^\vee[P] \to \Bbbk \big  \}.\] 
Let $\textbf{\textsf{L}}^\vee[P]\subset \Bbbk^{\text{T}^\vee[P]}$ denote the $\Bbbk$-linear span of the adjoint faces with support $\text{T}^\vee[P]$. The adjoint faces $\mathtt{X} \in \textbf{\textsf{L}}^\vee[P]$ are characterized by the property that $(S,T)$ is symmetric in $\cS_\mathtt{X}$ if and only if $S=\emptyset \mod P$. In particular, we have the $\Bbbk$-linear span of adjoint chambers, denoted
\[   
\textbf{L}^\vee[I]  :=\textbf{\textsf{L}}^\vee[(I)]   .\footnote{\ this notation is motivated by the fact that in the theory of Hopf monoids in species, the vector species of linear orders $\textbf{L}$ may be interpreted as formal $\Bbbk$-linear combinations of chambers of the braid arrangement}
\] 
Given a subset $S\subseteq I$, we have a natural isomorphism $\textbf{\textsf{L}}^\vee[^{I\! } S]\cong \textbf{L}^\vee[S]$.


 

\subsection{Projections of Adjoint Faces}\label{projj}

Let $R=( T_1|\dots| T_k  )$ be a partition of $I$, and let $P$ be a partition of $I$ which is finer than $R$. For $1\leq j \leq k$, let $P_j$ denote the partition of $T_j$ which is restriction of $P$ to $T_j$, and let $^{I\!} P_j$ denote the partition of $I$ which is the completion of $P_j$ with singletons.  

\begin{ex}
Let $I=\{ 1,2,3,4,5,6,7,8,9   \}$, $R=(1234|56|789)$, and $P=(12|34|56|7|89)$. Then 
\[
^{I\!} P_1=(12|34|5|6|7|8|9)
, \qquad 
^{I\!} P_2=(1|2|3|4|56|7|8|9)
, \qquad 
^{I\!}P_3=(1|2|3|4|5|6|7|89)   
.\] 
\end{ex}

We have the orthogonal factorization
\[      
\text{T}^\vee[P] =\bigoplus_{1\leq j \leq k}    \text{T}^\vee[^{I\!} P_j].\footnote{\ as $R$ becomes coarser for fixed $P$, the corresponding product of hyperplane arrangements will `miss' fewer hyperplanes; in particular, the finest case $R=P$ only sees hyperplanes which are semisimple}     
\]
For each adjoint face $\mathtt{X}\in \textbf{\textsf{L}}^\vee[P]$ and $1\leq j \leq k$, let $\mathtt{X}|_{T_j}$ denote the adjoint face in $\textbf{\textsf{L}}^\vee[^{I\!} P_j]$ which has sign
\[      s_{\mathtt{X}|_{T_j}}(S):=
\begin{cases}
s_{\mathtt{X}}(S\cap T_j)   &\quad\text{if $S\cap T_j\neq\emptyset$} \\
0  &\quad\text{if $S\cap T_j=\emptyset$.}
\end{cases}\]
To see that the adjoint face $\mathtt{X}|_{T_j}$ exists, notice that the orthogonal projection $h'$ of a point $h\in \text{T}^\vee[R]$ with $\mathtt{X}(h)=1$ onto $\text{T}^\vee[^{I\!} P_j]$ satisfies $\mathtt{X}|_{T_j}(h')=1$. 

\begin{definition}\label{def:proj}
Let $R=( T_1|\dots| T_k  )$ be a partition of $I$, and let $P$ be a partition of $I$ which is finer than $R$. Let $\mathtt{X}\in \textbf{\textsf{L}}^\vee[P]$ be an adjoint face. The \emph{projection} $\textbf{proj}_R(\mathtt{X})$ of $\mathtt{X}$ with respect to $R$ is given by
\[  
\textbf{proj}_R(\mathtt{X}):=   \mathtt{X}|_{T_1}\otimes  \dots \otimes \mathtt{X}|_{T_k}  \in  \textbf{\textsf{L}}^\vee[^{I\!} P_1] \otimes \dots \otimes \textbf{\textsf{L}}^\vee[^{I\!} P_k]
.\]
\end{definition}

\begin{prop}\label{prop:sur}
Let $R=( T_1|\dots| T_k  )$ be a partition of $I$, and let $P$ be a partition of $I$ which is finer than $R$. The map 
\[    
\textbf{proj}_R:    \textbf{\textsf{L}}^\vee[P]    \to  \textbf{\textsf{L}}^\vee[^{I\!} P_1]\otimes \dots \otimes \textbf{\textsf{L}}^\vee[^{I\!} P_k]
, \qquad  \ \   
\mathtt{X}\mapsto  \textbf{proj}_R(\mathtt{X})  
\]
is surjective.
\end{prop}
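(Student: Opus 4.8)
The plan is to exploit the fact that the target $\textbf{L}_{P_1}^\vee[I]\otimes \dots \otimes \textbf{L}_{P_k}^\vee[I]$ is spanned by the pure tensors $\mathtt{Y}_1\otimes \dots \otimes \mathtt{Y}_k$ of shards $\mathtt{Y}_j\in \textbf{L}_{P_j}^\vee[I]$. Since $\boldsymbol{\Delta}_R$ sends a shard to exactly such a pure tensor, it suffices to show that every such pure tensor is hit; that is, given arbitrary shards $\mathtt{Y}_1,\dots,\mathtt{Y}_k$, I must produce a single shard $\mathtt{X}\in \textbf{L}_P^\vee[I]$ with $\mathtt{X}|_{T_j}=\mathtt{Y}_j$ for all $j$. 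The shard $\mathtt{X}$ will be built geometrically from a suitable point of $\text{T}_P^\vee[I]$, using the orthogonal decomposition $\text{T}_P^\vee[I]=\bigoplus_j \text{T}_{P_j}^\vee[I]$.

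First I would choose, for each $j$, a point $h_j$ in the relatively open face of $\mathtt{Y}_j$, so that $s_{\mathtt{Y}_j}(U)$ is the sign of $\lambda_U(h_j)$ for every proper nonempty $U$. Since $(h_j)_i=0$ for $i\notin T_j$, we have $\lambda_S(h_j)=\lambda_{S\cap T_j}(h_j)$ for all $S$. Setting $h=\sum_j c_j h_j$ for positive scalars $c_j$ (to be fixed below) and $\mathtt{X}:=\mathtt{X}_{\cS_h}$, only the $j$-th summand contributes to $\lambda_U(h)$ when $U\subseteq T_j$, giving $\lambda_U(h)=c_j\lambda_U(h_j)$. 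Hence for $S\cap T_j\neq\emptyset$ one gets $s_\mathtt{X}(S\cap T_j)=s_{\mathtt{Y}_j}(S\cap T_j)=s_{\mathtt{Y}_j}(S)$, while $S\cap T_j=\emptyset$ forces $\lambda_{S\cap T_j}=0$ on both sides; comparing with \autoref{def:proj} this is exactly $s_{\mathtt{X}|_{T_j}}=s_{\mathtt{Y}_j}$, so $\mathtt{X}|_{T_j}=\mathtt{Y}_j$ as required.

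The one genuinely nontrivial point — and what I expect to be the main obstacle — is verifying that $\mathtt{X}=\mathtt{X}_{\cS_h}$ actually has support $\text{T}_P^\vee[I]$, i.e.\ lies in $\textbf{L}_P^\vee[I]$. By the characterization of $\textbf{L}_P^\vee[I]$, this is equivalent to $\lambda_S(h)=0$ holding exactly for those proper nonempty $S$ that are unions of blocks of $P$. One inclusion is automatic: if $S$ is a union of blocks of $P$, then (as $P$ is finer than $R$) each $S\cap T_j$ is a union of blocks of $P$ contained in $T_j$, so $\lambda_{S\cap T_j}(h_j)=0$ and hence $\lambda_S(h)=\sum_j c_j\lambda_{S\cap T_j}(h_j)=0$. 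The reverse — that no accidental vanishing occurs — is where genericity of the $c_j$ enters. If $S$ is not a union of blocks of $P$, some block $S'$ of $P$ meets $S$ properly; as $S'$ lies in a single $T_{j_0}$, the set $S\cap T_{j_0}$ is not a union of blocks of $P_{j_0}$, whence $\lambda_{S\cap T_{j_0}}(h_{j_0})\neq 0$ and at least one summand of $\lambda_S(h)=\sum_j c_j\lambda_{S\cap T_j}(h_j)$ is nonzero.

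Finally, each such bad $S$ imposes the single linear condition $\sum_j c_j\lambda_{S\cap T_j}(h_j)=0$ on $(c_1,\dots,c_k)$, which cuts out a proper hyperplane of $\bR^k$ (some coefficient is nonzero). There are only finitely many such $S$, so their hyperplanes cannot cover the positive orthant, and I may fix positive scalars $c_j$ avoiding all of them. For this choice $h$ lies on precisely the special hyperplanes containing $\text{T}_P^\vee[I]$, so $\text{supp}(\mathtt{X})=\text{T}_P^\vee[I]$ and $\mathtt{X}\in\textbf{L}_P^\vee[I]$ with $\boldsymbol{\Delta}_R(\mathtt{X})=\mathtt{Y}_1\otimes\dots\otimes\mathtt{Y}_k$. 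Geometrically this last step simply says that the open product chamber cut out by the $\mathtt{Y}_j$ inside $\text{T}_P^\vee[I]$ is nonempty and is subdivided by the non-semisimple hyperplanes of $\mathrm{Br}^\vee_P[I]$ into chambers, any one of which projects onto the prescribed tuple; the scalar-avoidance argument is just a way to select a point in one of these chambers.
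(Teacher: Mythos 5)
Your proof is correct and takes essentially the same route as the paper: both realize the pure tensor $\mathtt{Y}_1\otimes\dots\otimes\mathtt{Y}_k$ geometrically by summing points $h_j$ chosen in the corresponding relatively open faces and then arguing the sum can be arranged to miss every non-semisimple hyperplane of $\mathrm{Br}^\vee_P[I]$. The only difference is cosmetic: the paper gets the avoidance for free from the observation that the Minkowski sum is open in $\text{T}_P^\vee[I]$ and hence not covered by finitely many hyperplanes, whereas you make the same point explicit by a genericity argument on the positive scalars $c_j$ (and you also spell out the verification that $s_{\mathtt{X}|_{T_j}}=s_{\mathtt{Y}_j}$, which the paper merely asserts).
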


\begin{proof}
Let $\mathtt{X}_j\in \textbf{\textsf{L}}^\vee[^{I\!} P_j]$, for $1\leq j \leq k$, be any family of adjoint faces. The Minkowski sum of the relatively open faces corresponding to the $\mathtt{X}_j$ is open, since it is a cartesian product of open sets. Therefore it cannot be contained in the union of the hyperplanes of $\textsf{Br}^\vee[P]$. So we can find points $h_j$ with $\mathtt{X}_j(h_j)=1$ such that $h=\sum_j h_j$ satisfies $\mathtt{X}(h)=1$ for some $\mathtt{X}\in\textbf{\textsf{L}}^\vee[P]$. We have $ \mathtt{X}|_{T_j}=  \mathtt{X}_j$, and so
\[ \textbf{proj}_R (\mathtt{X} )   =  \mathtt{X}_1\otimes \dots \otimes \mathtt{X}_k .   \qedhere\] 
\end{proof}

If $R=P$, then we can make the identification $\textbf{\textsf{L}}^\vee[^{I\! } S_j]= \textbf{L}^\vee[S_j]$ to define
\[
\text{proj}_P:    \textbf{\textsf{L}}^\vee[P] \to  \textbf{L}^\vee[S_1]\otimes \dots \otimes \textbf{L}^\vee[S_k], \qquad  \ \   \mathtt{X} \mapsto  \textbf{proj}_P(\mathtt{X})      
.      \]

\subsection{Products of Functionals on Adjoint Faces}\label{sec:Functions on Shards}


We have the dual space 
\[   
 {\textbf{\textsf{L}}^\vee}^\ast[P]:=   \Hom(\textbf{\textsf{L}}^\vee[P], \Bbbk) = \big \{ \text{$\Bbbk$-linear functions $f:\textbf{\textsf{L}}^\vee[P]\to \Bbbk$}    \big \}
.\] 
Since adjoint faces form a basis of $\textbf{\textsf{L}}^\vee[P]$, the elements of ${\textbf{\textsf{L}}^\vee}^\ast[P]$ are naturally $\Bbbk$-valued functions on the adjoint faces $\mathtt{X}\in \textbf{\textsf{L}}^\vee[P]$. Since adjoint faces are already functions on $\text{T}^\vee[I]$, from now on we refer to elements $f\in {\textbf{\textsf{L}}^\vee}^\ast[P]$ as \emph{functionals}. 

We call a functional on adjoint faces $f\in {\textbf{\textsf{L}}^\vee}^\ast[P]$ \emph{simple} if it is supported by a simple flat, i.e. if the blocks of $P$ are all singletons, apart from one subset $S\subseteq I$ with $|S|\geq 2$, so that $f\in {\textbf{\textsf{L}}^\vee}^\ast[^{I\! } S]$. In particular, functionals on adjoint chambers are simple. 

\begin{definition}\label{def:prod}
Let $R=( T_1|\dots| T_k  )$ be a partition of $I$, let $P$ be a partition of $I$ which is finer than $R$, and let $^{I\!} P_j$ continue to denote the partition of $I$ which is the completion with singletons of the restriction of $P$ to $T_j$. Let $\textbf{\textit{f}}$ be a tensor product of functionals,
\[    
\textbf{\textit{f}}=f_1\otimes \dots \otimes f_k \in    {\textbf{\textsf{L}}^\vee}^\ast[^{I\!} P_1] \otimes \dots \otimes  {\textbf{\textsf{L}}^\vee}^\ast[^{I\!} P_k]    
. \]
The \emph{external product} $\textbf{prod}_R(\textbf{\textit{f}})$ of $\textbf{\textit{f}}$ with respect to $R$ is the functional in ${\textbf{\textsf{L}}^\vee}^\ast[P]$ whose value taken on each adjoint face $\mathtt{X}\in \textbf{\textsf{L}}^\vee[P]$ is the product of the values taken by the $f_j$ on the projections of $\mathtt{X}$ onto $\text{T}^\vee[^{I\!} P_j]$, thus
\[       
\textbf{prod}_R(\textbf{\textit{f}})(\mathtt{X}) := f_{1} (\mathtt{X}|_{T_1} )\dots f_{k} (\mathtt{X}|_{T_k} ), \qquad  \mathtt{X}\in \textbf{\textsf{L}}^\vee[P]         
.\]
\end{definition}

Notice that $\textbf{prod}_R$ is just the linear dual map of $\textbf{proj}_R$. Therefore, by \autoref{prop:sur}, we obtain an injective linear map
\[
\textbf{prod}_R: {\textbf{\textsf{L}}^\vee}^\ast[^{I\! }P_1]\otimes \dots \otimes {\textbf{\textsf{L}}^\vee}^\ast[^{I\! }P_k]\hookrightarrow {\textbf{\textsf{L}}^\vee}^\ast[P], \qquad  \ \    \textbf{\textit{f}} \mapsto \textbf{prod}_R(\textbf{\textit{f}})      
.          \]
A functional $f\in {\textbf{\textsf{L}}^\vee}^\ast[P]$ is called \emph{$R$-semisimple} if it is in the image of this embedding. If $R=P$, then $\textbf{\textit{f}}=f_1\otimes \dots \otimes f_k$ is a tensor product of simple functionals 
\[
f_j\in {\textbf{\textsf{L}}^\vee}^\ast[^{I\! } S_j]\cong {\textbf{\text{L}}^\vee}^\ast[S_j]
.\] 
In this case, we can make the identification ${\textbf{\textsf{L}}^\vee}^\ast[^{I\! } S_j]= {\textbf{\text{L}}^\vee}^\ast[S_j]$ to define
\[
\text{prod}_P:  {\textbf{\text{L}}^\vee}^\ast[S_1]\otimes \dots \otimes  {\textbf{\text{L}}^\vee}^\ast[S_k]\hookrightarrow  {\textbf{\textsf{L}}^\vee}^\ast[P], \qquad  \ \    \textbf{\textit{f}} \mapsto  \textbf{prod}_P(\textbf{\textit{f}}) 
. \]
A functional $f\in {\textbf{\textsf{L}}^\vee}^\ast[P]$ is called \emph{semisimple} if it is $P$-semisimple, i.e. if it is in the image of $\text{prod}_P$.

Let
\[   
 \textbf{\textsf{L}}^\vee[P|R]:=  \  \bigslant{ \textbf{\textsf{L}}^\vee[P] }{  \text{ker}\ \textbf{proj}_R   } 
.\]
Then $\textbf{proj}_R$ and $\textbf{prod}_R$ induce isomorphisms 
\[      
\textbf{\textsf{L}}^\vee[P|R] \cong   \textbf{\textsf{L}}^\vee[^{I\! }P_1]\otimes \dots \otimes \textbf{\textsf{L}}^\vee[^{I\! }P_k]  
\]
and
\[   {\textbf{\textsf{L}}^\vee}^\ast[P|R]  :=  \Hom(\textbf{\textsf{L}}^\vee[P|R], \Bbbk)      \cong   {\textbf{\textsf{L}}^\vee}^\ast[^{I\! }P_1]\otimes \dots \otimes  {\textbf{\textsf{L}}^\vee}^\ast[^{I\! }P_k]  . \]
In particular, ${\textbf{\textsf{L}}^\vee}^\ast[P|R]$ is naturally the space of $R$-semisimple functionals. \medskip

We now give an explicit description of $\text{ker}\ \textbf{proj}_R$. We continue to let $R$ and $P$ be partitions of $I$ with $P$ finer than $R$. We say that two adjoint faces are Steinmann $R$-adjacent if they are adjacent via a hyperplane which is not $R$-semisimple. We make the following equivalent definition.  

\begin{definition}
For distinct adjoint faces $\mathtt{X}_1,\mathtt{X}_2\in \textbf{\textsf{L}}^\vee[P]$, we say that $\mathtt{X}_1$ is \hbox{\emph{Steinmann $R$-adjacent}} to $\mathtt{X}_2$ if there exists a subset $E\subset I$ with $E\neq \emptyset \mod P$ which is not $R$-semisimple, and such that $s_{\mathtt{X}_1}$ and $s_{\mathtt{X}_2}$ differ only on the subsets which are equal to $E\mod  P$ or $I-E \mod  P$. 
\end{definition}

If $\mathtt{X}_1$ and $\mathtt{X}_2$ are $R$-semisimple via $E$, then it follows that 
\[   
s_{\mathtt{X}_2}(S) = 
\begin{cases}
-s_{\mathtt{X}_1}(S)  &\quad\text{if $S= E\mod P  \quad $  or  $\quad S= I-E \mod P   $} \\
\ \ s_{\mathtt{X}_1}(S)  &\quad\text{otherwise.} 
\end{cases}
\]
We say that $\mathtt{X}_1$ is \emph{Steinmann $R$-equivalent} to $\mathtt{X}_2$ if there exists a finite sequence of consecutively Steinmann $R$-adjacent adjoint faces which starts with $\mathtt{X}_1$ and terminates with $\mathtt{X}_2$. For Steinmann $P$-adjacency and Steinmann $P$-equivalence, we just say \emph{Steinmann adjacent} and \emph{Steinmann equivalent} respectively.   

In the following proof, see \cite[Section 2.1.7]{aguiar2017topics} for the notion of walls in hyperplane arrangements. 

\begin{thm}\label{lem}
Let $R$ and $P$ be partitions of $I$, with $P$ finer than $R$. Let $\mathtt{X}_1,\mathtt{X}_2\in \textbf{\textsf{L}}^\vee[P]$ be distinct adjoint faces. Then $\mathtt{X}_1$ is Steinmann $R$-equivalent to $\mathtt{X}_2$ if and only if 
\[\textbf{proj}_R  (\mathtt{X}_1)= \textbf{proj}_R  (\mathtt{X}_2).\]
In other words, Steinmann $R$-adjacency generates $\text{ker}\ \textbf{proj}_R$, and so $\textbf{\textsf{L}}^\vee[P|R]$ is naturally the quotient of $\textbf{\textsf{L}}^\vee[P]$ by Steinmann $R$-adjacency. 
\end{thm}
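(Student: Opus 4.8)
The plan is to prove the two implications separately, with both resting on a preliminary translation of the projection equality into a statement about signs. First I would show that $\boldsymbol{\Delta}_R(\mathtt{X}_1)=\boldsymbol{\Delta}_R(\mathtt{X}_2)$ holds if and only if $s_{\mathtt{X}_1}$ and $s_{\mathtt{X}_2}$ agree on every $R$-semisimple subset, equivalently if and only if no $R$-semisimple hyperplane of $\mathrm{Br}^\vee_P[I]$ separates the two chambers $\mathtt{X}_1,\mathtt{X}_2$. For one reading of this, given $E$ with $\text{Redn}_P(E)\subseteq T_j$, the blocks of $P$ contained in $E$ vanish on $\text{T}^\vee_P[I]$, so $\lambda_E=\lambda_{\text{Redn}_P(E)}$ there; hence $s_\mathtt{X}(E)=s_\mathtt{X}(\text{Redn}_P(E))=s_{\mathtt{X}|_{T_j}}(\text{Redn}_P(E))$ is read off from the single factor $\mathtt{X}|_{T_j}$. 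Conversely, every sign recorded by the factors $\mathtt{X}|_{T_1},\dots,\mathtt{X}|_{T_k}$ is the sign on such an $R$-semisimple subset, so these factors are collectively equivalent to the restriction of $s_\mathtt{X}$ to $R$-semisimple subsets.

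The \emph{only if} direction is then a direct computation. Suppose $\mathtt{X}_1$ is Steinmann $R$-adjacent to $\mathtt{X}_2$ via a non-$R$-semisimple $E$, so the signs differ only on subsets equal to $E$ or $I-E \mod P$. Since $\lambda_E$ and $\lambda_{I-E}$ cut out the same hyperplane of $\text{T}^\vee_P[I]$ and $R$-semisimplicity is a property of the hyperplane, both $E$ and $I-E$ are non-$R$-semisimple, so $\text{Redn}_P(E)$ and $\text{Redn}_P(I-E)$ each meet at least two blocks of $R$ and lie in no single $T_j$. If some nonempty $V\subseteq T_j$ satisfied $V=E \mod P$, then $\text{Redn}_P(E)=\text{Redn}_P(V)\subseteq V\subseteq T_j$, a contradiction, and likewise for $I-E$. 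Hence the differing signs never occur among subsets of a single $T_j$, so $\mathtt{X}_1|_{T_j}=\mathtt{X}_2|_{T_j}$ for every $j$ and $\boldsymbol{\Delta}_R(\mathtt{X}_1)=\boldsymbol{\Delta}_R(\mathtt{X}_2)$. Iterating along a chain of adjacencies handles Steinmann $R$-equivalence.

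For the \emph{if} direction I would argue by gallery connectivity. Assuming $\boldsymbol{\Delta}_R(\mathtt{X}_1)=\boldsymbol{\Delta}_R(\mathtt{X}_2)$, the first paragraph shows that no $R$-semisimple wall separates $\mathtt{X}_1$ from $\mathtt{X}_2$ as chambers of $\mathrm{Br}^\vee_P[I]$. I would then invoke the standard fact that a minimal gallery between two chambers of a real hyperplane arrangement crosses exactly the separating hyperplanes, each once, one wall at a time. Along such a minimal gallery, every crossed wall $H=\{\lambda_E=0\}$ separates $\mathtt{X}_1$ and $\mathtt{X}_2$ and is therefore non-$R$-semisimple, and the crossing flips $s_\mathtt{X}$ precisely on the subsets equal to $E$ or $I-E \mod P$, which is exactly a Steinmann $R$-adjacency. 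The gallery thus realizes $\mathtt{X}_1$ and $\mathtt{X}_2$ as Steinmann $R$-equivalent.

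I expect the \emph{if} direction to be the main obstacle. The crux is the careful use of walls (in the sense of \cite[Section 2.1.7]{aguiar2017topics}) to guarantee that each step of a minimal gallery is a single-hyperplane crossing whose sign-flip pattern matches the definition of Steinmann $R$-adjacency, together with the assurance that the minimal gallery never crosses an $R$-semisimple hyperplane. The separation criterion established in the first paragraph is precisely what rules out such crossings, so the two directions are tightly linked through it.
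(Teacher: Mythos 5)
Your proposal is correct and follows essentially the same route as the paper: reduce the equality $\boldsymbol{\Delta}_R(\mathtt{X}_1)=\boldsymbol{\Delta}_R(\mathtt{X}_2)$ to agreement of signs on $R$-semisimple subsets, get the forward implication directly from the definition of Steinmann $R$-adjacency, and get the converse by repeatedly crossing separating non-$R$-semisimple walls. Your minimal-gallery phrasing of the last step is just a repackaging of the paper's iterative wall-crossing argument, and your extra checks (that $R$-semisimplicity depends only on the hyperplane, i.e. is shared by $E$ and $I-E \bmod P$) are correct refinements of details the paper leaves implicit.
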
 
\begin{proof}
Directly from the definition of projections of adjoint faces, we see that 
\[\textbf{proj}_R  (\mathtt{X}_1)= \textbf{proj}_R  (\mathtt{X}_2)\] 
if and only if $s_{\mathtt{X}_1}$ and $s_{\mathtt{X}_2}$ take the same value on subsets of $I$ which are $R$-semisimple. The signs of Steinmann $R$-equivalent adjoint faces must agree on subsets which are $R$-semisimple because, in the definition of Steinmann $R$-adjacency, the sign is altered only on subsets which are not $R$-semisimple. Thus, Steinmann $R$-equivalence implies equal projections. 

Conversely, suppose that the signs $s_{\mathtt{X}_1}$ and $s_{\mathtt{X}_2}$ take the same values on subsets of $I$ which are $R$-semisimple. Then, since $\mathtt{X}_1$ and $\mathtt{X}_2$ are distinct and yet are not separated by any $R$-semisimple hyperplanes, there must exist a wall of (the face corresponding to) $\mathtt{X}_1$ which is not $R$-semisimple and which separates $\mathtt{X}_1$ and $\mathtt{X}_2$. Let this separating wall be defined by $\lambda_E=0$ for some proper and nonempty subset $E\subset I$. Then, move to the new adjoint face obtained from $\mathtt{X}_1$ by switching the sign of $s_{\mathtt{X}_1}$ on sets $S$ of the form $S= E\mod P$ or $S= I-E\mod P$. This new adjoint face is Steinmann $R$-adjacent to $\mathtt{X}_1$. We repeat this process until the newly obtained adjoint face is $\mathtt{X}_2$, which must happen after a finite number of iterations. This produces a sequence of consecutively Steinmann $R$-adjacent adjoint faces from $\mathtt{X}_1$ to $\mathtt{X}_2$, and so $\mathtt{X}_1$ is Steinmann $R$-equivalent to $\mathtt{X}_2$.
\end{proof}



\begin{cor} \label{cor}
A functional on adjoint faces is $R$-semisimple if and only if it is constant on Steinmann $R$-equivalence classes of adjoint faces. In particular, a functional on adjoint faces is semisimple if and only if it is constant on Steinmann equivalence classes of adjoint faces
\end{cor}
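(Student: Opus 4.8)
The plan is to read the corollary off Theorem \ref{lem} by pure linear-algebra duality, since $\boldsymbol{\mu}_R$ was defined precisely as the transpose of $\boldsymbol{\Delta}_R$. First I would recall the definition: a functional $f\in \Hom(\textbf{L}_P^\vee[I],\Bbbk)$ is $R$-semisimple exactly when it lies in the image of $\boldsymbol{\mu}_R=(\boldsymbol{\Delta}_R)^\ast$. Because $\boldsymbol{\Delta}_R$ is surjective by \autoref{prop:sur}, the standard fact about transposes of surjections identifies this image with the annihilator of $\ker\boldsymbol{\Delta}_R$; concretely, $f$ is $R$-semisimple if and only if $f$ factors through the quotient $\textbf{L}_{P|R}^\vee[I]=\textbf{L}_P^\vee[I]/\ker\boldsymbol{\Delta}_R$, i.e. if and only if $\ker\boldsymbol{\Delta}_R\subseteq \ker f$. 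Since the shards form a basis and $\boldsymbol{\Delta}_R$ is a map on these explicit bases, this is elementary finite-dimensional duality, and I would state it with a one-line justification rather than invoke abstract machinery.

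The second, and only substantive, input is \autoref{lem}, which tells us that $\ker\boldsymbol{\Delta}_R$ is spanned by the differences $\mathtt{X}_1-\mathtt{X}_2$ of Steinmann $R$-adjacent shards. Combining this with the previous paragraph, $f$ is $R$-semisimple if and only if $f$ annihilates every such generator, i.e. if and only if $f(\mathtt{X}_1)=f(\mathtt{X}_2)$ whenever $\mathtt{X}_1$ and $\mathtt{X}_2$ are Steinmann $R$-adjacent. Finally, since Steinmann $R$-equivalence is by definition the reflexive, symmetric, transitive closure of Steinmann $R$-adjacency (a finite chain of consecutively adjacent shards), a functional agrees on all adjacent pairs precisely when it is constant on each Steinmann $R$-equivalence class. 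This yields the claimed equivalence.

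The ``in particular'' clause is then immediate: it is the special case $R=P$, for which ``$R$-semisimple'' is by definition ``semisimple'' and ``Steinmann $R$-equivalent'' is by definition ``Steinmann equivalent''.

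I do not anticipate a genuine obstacle, as the essential work has already been carried out in \autoref{lem} and the corollary is its formal transpose. The only point requiring minor care is the duality identification of $\image(\boldsymbol{\Delta}_R)^\ast$ with the functionals vanishing on $\ker\boldsymbol{\Delta}_R$, which I would justify directly from the surjectivity of $\boldsymbol{\Delta}_R$ on the shard bases; all spaces in sight are spanned by the finite sets of shards, so finite-dimensional duality applies and no infinite-dimensional subtlety arises.
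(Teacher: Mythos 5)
Your proposal is correct and is exactly the argument the paper intends: the corollary is stated without proof as an immediate consequence of \autoref{lem}, obtained by dualizing (using that $\boldsymbol{\mu}_R$ is the transpose of the surjection $\boldsymbol{\Delta}_R$, so its image is the annihilator of $\ker\boldsymbol{\Delta}_R$, which \autoref{lem} identifies with the span of differences of Steinmann $R$-adjacent shards). Your spelled-out duality step and the specialization $R=P$ for the ``in particular'' clause match the paper's implicit reasoning precisely.
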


In terms of finite differences of functionals across hyperplanes, which we study next, \autoref{cor} characterizes $R$-semisimple functionals as functionals whose value does not change across hyperplanes which are not $R$-semisimple. In particular, a semisimple functional is equivalently a functional whose value changes only across semisimple hyperplanes.

\section{The Forest Derivative} \label{sec:forestderiv}

\noindent \hyperlink{foo}{We} define the notion of tree and forest we shall be using, and define a composition of forests. This gives forests the structure of a one-way category, which we call the category of partitions. In this category, forests represent processes of refining partitions. We associate linear maps to forests which evaluate finite differences of functionals on adjoint faces across semisimple flats. We show that this association has antisymmetry and satisfies the Jacobi identity, as interpreted on forests.

\subsection{Trees and Forests}

For us, a \emph{tree} over a finite set $S$ is a planar\footnote{\ i.e. a choice of left and right child is made at every node} full binary tree whose leaves are labeled bijectively with the blocks of a partition of $S$. 
\begin{figure}[H]
	\centering
	\includegraphics[scale=0.7]{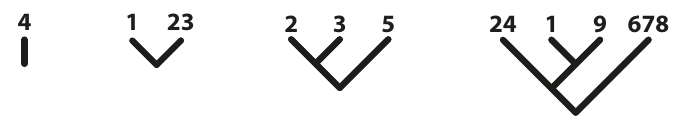}
	\caption{Some trees over sets of single digit integers. We adopt the convention that, for example, `$23$' denotes the set $\{2,3\}$. When drawing trees, we use the botanical convention rather than the uprooted convention, which is better suited to our needs.}
	\label{fig:tree}
\end{figure} 
\noindent A \emph{layered tree} $\mathcal{T}$ over a finite set $S_\mathcal{T}$ is a tree over $S_\mathcal{T}$, together with the structure of a linear ordering of the nodes of $\mathcal{T}$ such that if $v\in \mathcal{T}$ is a node on the geodesic from the root of $\mathcal{T}$ to another node $u \in \mathcal{T}$, then $v<u$. Layered trees are sometimes called trees with levels or increasing trees. We say that a layered tree is \emph{unlumped} if its leaves are labeled with singletons. 
\begin{figure}[H]
	\centering
	\includegraphics[scale=0.7]{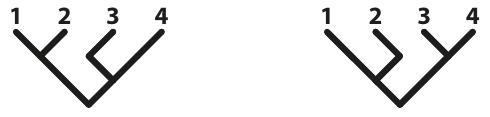}
	\caption{Schematic representations of two unlumped layered trees which have the same underlying delayered tree.}
	\label{fig:treelay}
\end{figure} 
\noindent We let $|\mathcal{T}|$ denote the number of leaves of $\mathcal{T}$. A \emph{stick} is a tree $\mathcal{T}$ with $|\mathcal{T}|=1$. We denote layered trees by nested products of sets $[\ \cdot\ ,\ \cdot\ ]$ when there is no ambiguity regarding the layering. For example, the trees in \autoref{fig:tree} have unique layerings, and may be denoted
\[   [4] \qquad  \ \ \  [1,23]\ \ \  \qquad  [[2,3],5]\ \ \  \qquad  [[ 24 ,[1,9] ] ,678 ].\]



Let $P=( S_1| \dots| S_k)$ be a partition of $I$. A \emph{layered forest} $\mathcal{F}= ( \mathcal{T}_1| \dots|\mathcal{T}_k )$ over $P$ is a set of trees such that $\mathcal{T}_j$ is a tree over $S_j$, together with the structure of a linear ordering of the nodes of the trees of $\mathcal{F}$ such that the restriction to each tree is a layered tree. 

For $Q$ a partition of $I$ which is finer than $P$, let $\textbf{\textsf{Lay}}[Q,P]$ denote the vector space of formal $\Bbbk$-linear combinations of layered forests over $P$, whose leaves are labeled bijectively with the blocks of $Q$. We have the space of unlumped layered trees over $I$, denoted
\[
\textbf{Lay}[I]:=  \textbf{\textsf{Lay}}\big [I,(I)\big ] 
. \]
We write $\mathcal{F}:P \leftarrow Q$ to indicate $\mathcal{F}\in \textbf{\textsf{Lay}}[Q,P]$. The arrow is intentionally in the direction of fusing the blocks of $Q$ together to form $P$.

Let $P=( S_1| \dots| S_k)$ be a partition of $I$, and let $\mathcal{T}$ be a tree over $S_m$ for some $1\leq m \leq k$. The \emph{$\mathcal{T}$-forest} over $P$ is the forest obtained by completing $\mathcal{T}$ with sticks labeled by the $S_j$, $j\neq m$. In contexts where there is no ambiguity, we denote this forest by $\mathcal{T}$. The $\mathcal{T}$-forests with $|\mathcal{T}|=2$ are called \emph{cuts}, and are denoted by $\mathcal{V}$. We denote by $\overline{\mathcal{V}}$ the cut which is obtained by switching the left and right branches of $\mathcal{V}$.

\begin{figure}[H]
	\centering
	\includegraphics[scale=0.7]{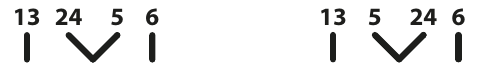}
	\caption{The cuts $\mathcal{V}=[24,5]$ and $\overline{\mathcal{V}}=[5,24]$ over the partition $(13|245|6)$}
	\label{fig:cut}
\end{figure} 

\begin{definition}
Let $\mathcal{F}_1:P \leftarrow Q$ and $\mathcal{F}_2:Q \leftarrow R$ be layered forests. The \emph{composition} 
\[
\mathcal{F}_1\circ \mathcal{F}_2:P \leftarrow R
\] 
of $\mathcal{F}_2$ with $\mathcal{F}_1$ is the layered forest which is obtained by identifying each leaf of $\mathcal{F}_1$ with the root node of the corresponding tree of $\mathcal{F}_2$, and requiring that $v_1$ is less than $v_2$ for all nodes $v_1\in \mathcal{F}_1$ and $v_2\in \mathcal{F}_2$. 
\end{definition}

Every layered forest $\mathcal{F}$ has a unique decomposition into cuts, corresponding to the linear ordering of the nodes of $\mathcal{F}$,
\[   
\mathcal{F}=  \mathcal{V}_{1} \circ  \dots \circ   \mathcal{V}_{l}         
 .   \]
The \emph{category of partitions} $\textsf{Lay}_I$ over $I$ is the linear one-way category with objects the partitions of $I$, hom-spaces formal linear combinations of layered forests
\[    
\Hom_{ \textsf{Lay}_I } (Q,P) :=  \textbf{\textsf{Lay}}[Q,P] 
,\] 
and morphism composition the linearization of layered forest composition,
\[      
\textbf{\textsf{Lay}}[R,Q] \otimes \textbf{\textsf{Lay}}[Q,P] \to \textbf{\textsf{Lay}}[R,P]
,\qquad 
\mathcal{F}_2\otimes \mathcal{F}_1 \mapsto \mathcal{F}_1\circ \mathcal{F}_2  
.\]

\begin{remark}
Categories of forests of this kind, subject to various relations, can often be interpreted as the structure for (symmetric May) operads with morphism composition providing the operadic composition, see \autoref{sec:lunes}.\footnote{\ note this correspondence between categories and operads, which appears in the theory of real hyperplane arrangements \cite[Section 15.9]{aguiar2017topics}, is slightly different to the correspondence which says that an operad is a symmetric multicategory with one object} In the case of the category of partitions, the structure preventing an operadic structure is the layering. Operads cannot `see' the layering because an operad models a forest as a tensor product of trees. However, a less structured version of operads was recently given in \cite[Chapter 4]{aguiar2020bimonoids} which formalizes the current situation. The linear category $\textsf{Lay}_I$ perspective we take in this paper is \cite[Section 4.2.3]{aguiar2020bimonoids}.
\end{remark}

The category of partitions is freely generated by cuts $\mathcal{V}$, which follows from the fact that every layered forest has a unique decomposition into cuts. See \autoref{sec:lunes} for an important interpretation of this category in terms of the braid arrangement. We also show in \autoref{sec:lunes} that the category of partitions acts on faces and top-lunes of both the braid arrangement and the adjoint braid arrangement.
 
Let $P=(S_1|\dots| S_k)$ be a partition of $I$, and let $C$ be a proper and nonempty subset of a block $S_m \in P$, for some $1\leq m \leq k$. Let $D=S_m- C$. Let $P'$ be the refinement of $P$ which is obtained by replacing the block $S_m$ with the blocks $C$ and $D$. Then the forests $\mathcal{F}$ of the form $\mathcal{F}:P \leftarrow P'$ are the cuts
\[\mathcal{V}=[C,D] \qquad \text{and} \qquad    \overline{\mathcal{V}}=[D,C].  \] 
We think of $ \mathcal{V}$ as refining $P$ by choosing $C$, and of $ \overline{\mathcal{V}}$ as refining $P$ by choosing $D$. 

In this way, a generic layered forest $\mathcal{F}:P \leftarrow Q$ describes a process of refining $P$ to give $Q$ by cutting blocks such that each time a block is cut, one of the two new blocks is favored. The favored block appears on the left branch of the forest, whereas the unfavored block appears on the right branch.

\begin{definition}
For $\mathcal{F}\in \textsf{Lay}_I$ a layered forest, the \emph{antisymmetrization} $\cA_I(\mathcal{F})\in \textsf{Lay}_I$ of $\mathcal{F}$ is the alternating sum of all the layered forests which are obtained by switching left and right branches at nodes of $\mathcal{F}$, with sign the parity of the number of switches. 
\end{definition} 

The antisymmetrization of layered forests defines a linear endofunctor on the category of partitions,
\[      \cA_I: \textsf{Lay}_I \to \textsf{Lay}_I, \qquad  \mathcal{F}\mapsto \cA_I(\mathcal{F}) .            \]




\subsection{The Forest Derivative} \label{sec:deriv}

\hyperlink{foo}{Given} an adjoint face $\mathtt{X}\in \textbf{\textsf{L}}^\vee[Q]$ and a cut \hbox{$\mathcal{V}=[C,D]:P \leftarrow Q$} of a block $S_m\in P$, let us denote by $\mathtt{X}^\mathcal{V}$ the adjoint face in $\textbf{\textsf{L}}^\vee[P]$ which has sign
\[   
s_{\mathtt{X}^{\mathcal{V}}}(S) := 
\begin{cases}
+ &\quad\text{if $S=C \mod P$} \\
- &\quad\text{if $S=D \mod P$}\\
s_\mathtt{X}(S)  &\quad\text{if $S\neq \emptyset \mod Q$} \\
0 &\quad\text{if $S= \emptyset \mod P$}.  \\
\end{cases}
\]
Notice that $\text{T}^\vee[Q]$ is the hyperplane of $\text{T}^\vee[P]$ which contains $\mathtt{X}$ as a top dimensional adjoint face, and $\mathtt{X}^{\mathcal{V}}$ and $\mathtt{X}^{\overline{\mathcal{V}}}$ are the two adjoint faces with support $\text{T}^\vee[P]$ which have $\mathtt{X}$ as a `facet'. 
\begin{figure}[H]
	\centering
	\includegraphics[scale=1.05]{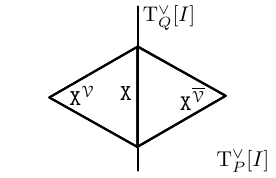}
	\label{fig:bracket}
\end{figure} 
\noindent In terms of Aguiar-Mahajan's general theory of real hyperplane arrangements \cite{aguiar2017topics}, $\mathtt{X}^\mathcal{V}$ is the composition (in the category of lunes) of the half-flat associated to $\mathcal{V}$, with $\mathtt{X}$. See \autoref{sec:lunes} for details.

\begin{remark} \label{arrow}
In the case $\textbf{\textsf{L}}^\vee[Q]=\textbf{\textsf{L}}^\vee[^{I\! } S]$ with $|S|=n-1$, and if we identify $\textbf{L}^\vee[S]=\textbf{L}^\vee[^{I\! } S]$, then we obtain two up-operators on the species $\textbf{L}^\vee$ called the Steinmann arrows, see e.g. \cite[Equation 20 and 21]{epstein2016}. 
\end{remark}

\begin{definition} \label{def:deriv}
Let $f \in {\textbf{\textsf{L}}^\vee}^\ast[P]$ be a functional on adjoint faces, and let $\mathcal{V}:P \leftarrow Q$ be a cut of $P$. The \emph{derivative} $\boldsymbol{\partial} _{\mathcal{V}} f$ of $f$ with respect to $\mathcal{V}$ is the functional in ${\textbf{\textsf{L}}^\vee}^\ast[Q]$ which is given by
\[       \boldsymbol{\partial} _{\mathcal{V}} f (\mathtt{X}):=  f(\mathtt{X}^\mathcal{V})- f(\mathtt{X}^{\overline{\mathcal{V}}}), \qquad \text{for} \quad      \mathtt{X}\in  \textbf{\textsf{L}}^\vee[Q] .  \] 
More generally, let $\mathcal{F}:P \leftarrow Q$ be any layered forest with decomposition into cuts
\[      \mathcal{F}=  \mathcal{V}_1 \circ  \dots \circ \mathcal{V}_l  .     \]
The \emph{forest derivative} $\boldsymbol{\partial} _{\mathcal{F}} f$ of $f$ with respect to $\mathcal{F}$ is the functional in ${\textbf{\textsf{L}}^\vee}^\ast[Q]$ which is given by the following composition of derivatives with respect to cuts,
\[        \boldsymbol{\partial} _\mathcal{F} f :=   \boldsymbol{\partial} _{\mathcal{V}_l}  (    \boldsymbol{\partial} _{\mathcal{V}_{l-1}} ( \dots   ( \boldsymbol{\partial} _{\mathcal{V}_2} (\boldsymbol{\partial} _{\mathcal{V}_1} f) )\dots ))   .    \]
\end{definition}

We shall sometimes call a derivative of the form $\boldsymbol{\partial} _{\mathcal{V}} f$ a \emph{first derivative}. See \autoref{sec:lunes} for a deeper perspective on the forest derivative, in terms of the category of Lie elements of the adjoint braid arrangement. 

In situations where a partition $P=( S_1| \dots| S_k)$ of $I$ is given, and $\mathcal{T}$ is a tree over $S_m$ for some $1\leq m\leq k$, we let $\boldsymbol{\partial} _\mathcal{T}$ denote the derivative with respect to the $\mathcal{T}$-forest over $P$ (recall that the $\mathcal{T}$-forest over $P$ is completion of $\mathcal{T}$ with sticks labeled by the blocks of $P$). 

The forest derivative defines a linear map of functionals on adjoint faces, given by
\[     \boldsymbol{\partial} _\mathcal{F} : {\textbf{\textsf{L}}^\vee}^\ast[P]\to  {\textbf{\textsf{L}}^\vee}^\ast[Q], \qquad   f\mapsto  \boldsymbol{\partial} _\mathcal{F} f.           \]
It is a direct consequence of the definition that the derivative respects forest composition; we have
\[               \boldsymbol{\partial} _{\mathcal{F}_1 \circ \mathcal{F}_2} = \boldsymbol{\partial} _{\mathcal{F}_2}  \circ \boldsymbol{\partial} _{ \mathcal{F}_1 } .       \]
The identities of $\textsf{Lay}_I$ are the forests of sticks. If $\mathcal{F}$ is a forest of sticks, then the decomposition of $\mathcal{F}$ into cuts is empty, and $\boldsymbol{\partial}_\mathcal{F}$ is the identity linear map. Therefore, the forest derivative defines a contravariant linear functor on the category of partitions into the category of vector spaces, given by
\[         \textsf{Lay}^\text{op}_I \to \textsf{Vec}, \qquad  \ \          P\mapsto {\textbf{\textsf{L}}^\vee}^\ast[P]     , \quad            \mathcal{F} \mapsto \boldsymbol{\partial}_\mathcal{F}.           \]


\subsection{The Dual Forest Derivative} 

We now describe the linear dual of the forest derivative. 

\begin{definition}\label{dualderiv}
Let $\mathtt{X}\in \textbf{\textsf{L}}^\vee[Q]$ be an adjoint face, and let $\mathcal{V}:P \leftarrow Q$ be a cut which produces $Q$. The \emph{dual derivative} $\boldsymbol{\partial}^\ast_\mathcal{V}\,  \mathtt{X}$ of $\mathtt{X}$ with respect to the cut $\mathcal{V}$ is the element of $\textbf{\textsf{L}}^\vee[P]$ which is given by
\[          \boldsymbol{\partial}^\ast_\mathcal{V}\,  \mathtt{X} := \mathtt{X}^\mathcal{V}-\mathtt{X}^{\overline{\mathcal{V}}}.              \]
More generally, let $\mathcal{F}:P \leftarrow Q$ be any layered forest with decomposition into cuts
\[      \mathcal{F}=  \mathcal{V}_1 \circ  \dots \circ \mathcal{V}_l  .    \]
The \emph{dual forest derivative} $\boldsymbol{\partial}^\ast_\mathcal{F}\,  \mathtt{X}$ of $\mathtt{X}\in \textbf{\textsf{L}}^\vee[Q]$ with respect to $\mathcal{F}$ is the element of $\textbf{\textsf{L}}^\vee[P]$ which is given by the following composition of dual derivatives with respect to cuts,
\[        \boldsymbol{\partial}^\ast_\mathcal{F}\,  \mathtt{X} :=     \boldsymbol{\partial} ^\ast_{\mathcal{V}_1}  (     \boldsymbol{\partial} ^\ast_{\mathcal{V}_{2}}   \dots    ( \boldsymbol{\partial} ^\ast_{\mathcal{V}_{l-1}}    (\boldsymbol{\partial} ^\ast_{\mathcal{V}_l} (\mathtt{X}))\dots ))   .     \]
\end{definition}

After extending linearly, the dual forest derivative is a linear map of linear combinations of adjoint faces, given by
\[     \boldsymbol{\partial} ^\ast_\mathcal{F} : \textbf{\textsf{L}}^\vee[Q]  \to \textbf{\textsf{L}}^\vee[P], \qquad   \mathtt{X}\mapsto  \boldsymbol{\partial}^\ast_\mathcal{F}\,  \mathtt{X}.           \]
It is a direct consequence of the definition that the dual derivative respects forest composition,
\[               \boldsymbol{\partial} ^\ast_{\mathcal{F}_1 \circ \mathcal{F}_2} = \boldsymbol{\partial} ^\ast_{\mathcal{F}_1}  \circ \boldsymbol{\partial} ^\ast_{ \mathcal{F}_2 } .       \]
Notice that $\boldsymbol{\partial} ^\ast _\mathcal{F}$ is just the linear dual of $\boldsymbol{\partial} _\mathcal{F}$, we have
\[            \boldsymbol{\partial} _\mathcal{F} f (\mathtt{X})  = f ( \boldsymbol{\partial}_\mathcal{F}^\ast\,  \mathtt{X}  )    .      \qedhere   \]
The dual forest derivative defines a covariant linear functor on the category of partitions into the category of vector spaces, given by
\[         \textsf{Lay}_I \to \textsf{Vec}, \qquad   \ \            P\mapsto \textbf{\textsf{L}}^\vee[P]     , \quad       \mathcal{F} \mapsto \boldsymbol{\partial}^\ast_\mathcal{F}.           \]

For $\mathcal{F}=  \mathcal{V}_1 \circ  \dots \circ \mathcal{V}_l$, let
\begin{equation*}
\mathtt{X}^\mathcal{F}:=   { \big ({{ (\mathtt{X}^{\mathcal{V}_l})  }}^{\dots}\big )}   ^{\mathcal{V}_1} .   
\end{equation*}
Let us extend the map $\mathcal{F} \mapsto \mathtt{X}^\mathcal{F}$ linearly to formal linear combinations of forests. Then directly from the definition of $ \boldsymbol{\partial} ^\ast_\mathcal{F}$, we see that  \begin{figure}[t]
	\centering
	\includegraphics[scale=0.6]{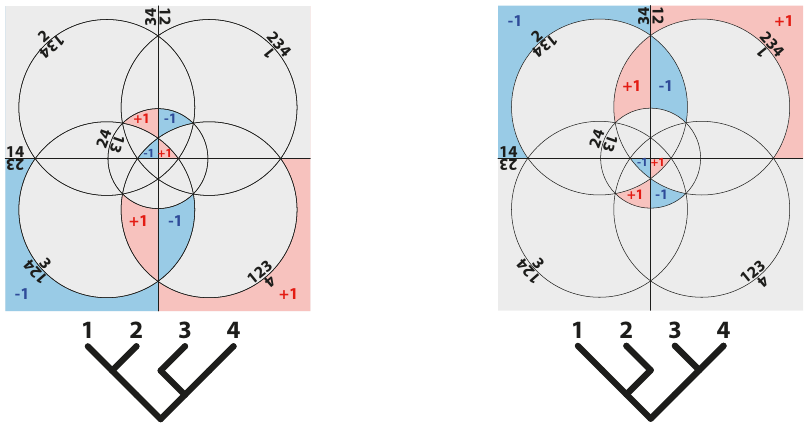}
	\caption{The dual forest derivative $\boldsymbol{\partial}^\ast_\mathcal{F} \mathtt{X}$ for $\mathtt{X}$ equal to the zero-dimensional adjoint face, and $\mathcal{F}$ equal to the two layerings of $[[1,2],[3,4]]$, depicted on the stereographic projection of the Steinmann planet. For unlumped forests, $\boldsymbol{\partial}^\ast_\mathcal{F} \mathtt{X}$ is the Lie element of $\mathcal{F}$ (see \autoref{sec:lunes}).}
	\label{fig:layer}
\end{figure} 
\[  \boldsymbol{\partial}^\ast_\mathcal{F}\,  \mathtt{X}  =  \mathtt{X}^{\cA_I( \mathcal{F})}.          \]
In particular, we have
\[            \boldsymbol{\partial} _\mathcal{F} f (\mathtt{X})  =  f( \mathtt{X}^{   \cA_I( \mathcal{F})   })    .         \] 
Note that the derivative does depend upon the layering of forests, see \autoref{fig:layer}.  


\subsection{Lie Properties}  \label{sec:lieprop}

We now show that the forest derivative satisfies the Lie axioms of antisymmetry and the Jacobi identity, as interpreted on layered forests. We first give two examples, the first showing antisymmetry holding for $n=2$, and the second showing the Jacobi identity holding for $n=3$. 

\begin{ex}
Let $I=\{1,2\}$. Let $\mathtt{X}$ be the central zero-dimensional adjoint face. Then
\[      \boldsymbol{\partial} ^\ast_{[1,2]} \mathtt{X} +  \boldsymbol{\partial} ^\ast_{[2,1]} \mathtt{X} =0      .              \]
Schematically, we have 
\begin{figure}[H]
	\centering
	\includegraphics[scale=0.6]{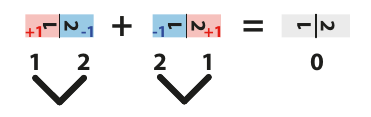}
\end{figure} 
\end{ex}

\begin{ex} \label{jacid}
Let $I=\{1,2,3\}$. Again, let $\mathtt{X}$ be the central zero-dimensional adjoint face. Then
\[      \boldsymbol{\partial} ^\ast_{[[1,2],3   ]} \mathtt{X} +  \boldsymbol{\partial} ^\ast_{[[3,1], 2 ]} \mathtt{X} +\boldsymbol{\partial} ^\ast_{[[2,3], 1 ]} \mathtt{X}=0      .              \]
Schematically, we have
\begin{figure}[H]
	\centering
	\includegraphics[scale=0.6]{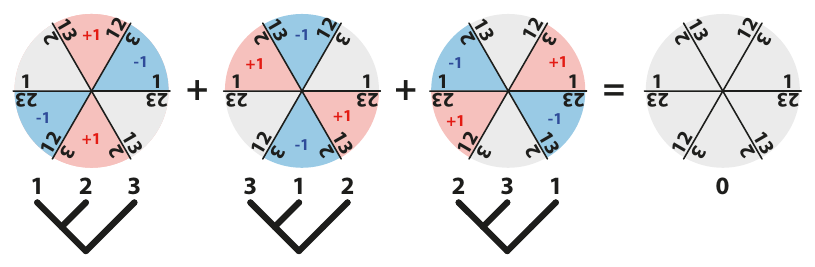}
\end{figure} 
\end{ex} 

\begin{thm}\label{thm:lie}
Let $P=( S_1|\dots| S_k)$ be a partition of $I$, and let $S_m$, $1\leq m\leq k$, be a block of $P$. 

\begin{enumerate}
\item
For trees $\mathcal{T}_1$ and $\mathcal{T}_2$ such that $(S_{\mathcal{T}_1}| S_{\mathcal{T}_2})$ is a partition of $S_m$, we have
\[  \boldsymbol{\partial} _{[\mathcal{T}_1,\mathcal{T}_2]}+\boldsymbol{\partial} _{[\mathcal{T}_2,\mathcal{T}_1]}=0.  \]
\item
For trees $\mathcal{T}_1$, $\mathcal{T}_2$ and $\mathcal{T}_3$ such that $( S_{\mathcal{T}_1}| S_{\mathcal{T}_2}| S_{\mathcal{T}_3} )$ is a partition of $S_m$, we have
\[ \boldsymbol{\partial} _{[[\mathcal{T}_1,\mathcal{T}_2],\mathcal{T}_3]}   +\boldsymbol{\partial} _{[[\mathcal{T}_3,\mathcal{T}_1],\mathcal{T}_2]}  +\boldsymbol{\partial} _{[[\mathcal{T}_2,\mathcal{T}_3],\mathcal{T}_1]}  =0 .\] 
\end{enumerate}
\end{thm}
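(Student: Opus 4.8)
The plan is to push everything through the identity $\boldsymbol{\partial}_\mathcal{F} f(\mathtt{X}) = f(\mathtt{X}^{\cA_I(\mathcal{F})})$ recorded just above, working inside the chosen block $S_m$ (the other blocks of $P$ remaining inert sticks). Since functionals separate shards, each asserted relation $\sum_i \boldsymbol{\partial}_{\mathcal{F}_i}=0$ is equivalent to $\sum_i \mathtt{X}^{\cA_I(\mathcal{F}_i)}=0$ in $\textbf{L}^\vee[I]$ for every shard $\mathtt{X}$. I will also use that $\cA_I$ factorizes across a disjoint union of node-sets: antisymmetrizing over all nodes is the composite of the antisymmetrizations over each part. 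Splitting the nodes of $[\mathcal{T}_1,\mathcal{T}_2]$ into the root, the nodes of $\mathcal{T}_1$, and the nodes of $\mathcal{T}_2$ gives $\cA_I([\mathcal{T}_1,\mathcal{T}_2]) = [L_1,L_2]-[L_2,L_1]$, where $L_i := \cA_I(\mathcal{T}_i)$ and $[a,b]$ denotes grafting $a,b$ under a common new root, extended linearly; iterating, $\cA_I([[\mathcal{T}_1,\mathcal{T}_2],\mathcal{T}_3]) = \{\{L_1,L_2\},L_3\}$ with $\{a,b\}:=[a,b]-[b,a]$.

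Part (1) is then immediate. Interchanging the branches at the root is a single branch-switch, and $\cA_I$ is anti-invariant under any single switch: toggling the chosen node is a sign-reversing involution on the switch-sets indexing $\cA_I$. Hence $\cA_I([\mathcal{T}_1,\mathcal{T}_2]) = -\,\cA_I([\mathcal{T}_2,\mathcal{T}_1])$, so the two shard-combinations are negatives of one another and $\boldsymbol{\partial}_{[\mathcal{T}_1,\mathcal{T}_2]}+\boldsymbol{\partial}_{[\mathcal{T}_2,\mathcal{T}_1]}=0$.

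The main obstacle is a pair of geometric \emph{pre-Lie relations} for the raw grafting: for all trees $u,v,w$ and every shard $\mathtt{X}$,
\[ \mathtt{X}^{[[u,v],w]} = \mathtt{X}^{[[u,w],v]} \qquad\text{and}\qquad \mathtt{X}^{[u,[v,w]]} = \mathtt{X}^{[v,[u,w]]} . \]
I would establish these by computing the sign function $s_{\mathtt{X}^\mathcal{T}}$ inductively on $\mathcal{T}$ from the cut formula for $s_{\mathtt{X}^\mathcal{V}}$: a subset $S$ acquires a $+$ or $-$ at the top cut only when it reduces modulo the current partition to the favored or unfavored top block, and otherwise inherits the sign produced by the subtrees. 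The relations say that swapping the two right-hand subtrees $v,w$, respectively the two outer subtrees $u,v$, changes none of these signs. The underlying reason is structural: the hyperplanes of $\mathrm{Br}^\vee[I]$ are the sum-hyperplanes $\lambda_S=0$, which cannot record the relative order of the blocks $S_v$ and $S_w$ — such an ordering is braid-arrangement (difference-hyperplane) data, invisible to the adjoint arrangement. The one genuinely computational point is to check that the inherited signs agree in the spanning cases, where the block of one subtree is absorbed as a union of current blocks under the reduction; the $n=2,3$ examples above already display the mechanism, and the general case is identical with $S_u,S_v,S_w$ replacing singletons and recursion into the subtrees handling internal subsets.

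Finally I would assemble part (2). By the factorization, the cyclic sum is $\sum_{\mathrm{cyc}}\{\{L_1,L_2\},L_3\}$, a signed combination of twelve grafted terms: six of left-comb shape $[[-,-],-]$ and six of right-comb shape $[-,[-,-]]$. Applying $\mathtt{X}^{(-)}$ and using the first pre-Lie relation (symmetry in the last two slots) together with the antisymmetry $\{a,b\}=-\{b,a\}$, the six left-comb terms cancel in pairs; the six right-comb terms cancel using the second pre-Lie relation (symmetry in the first two slots). Both relations are linear in each subtree, so they apply verbatim to the $L_i$. Thus $\sum_{\mathrm{cyc}}\mathtt{X}^{\cA_I([[\mathcal{T}_1,\mathcal{T}_2],\mathcal{T}_3])}=0$ for every $\mathtt{X}$, and evaluating against $f$ yields the Jacobi identity. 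Once the pre-Lie lemma is in hand, the Jacobi identity is a short, purely formal cancellation; the geometry enters only through that lemma.
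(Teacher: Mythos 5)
Your proposal is correct and follows essentially the same route as the paper: both arguments reduce to the two geometric pre-Lie identities $\mathtt{X}^{[[u,v],w]}=\mathtt{X}^{[[u,w],v]}$ and $\mathtt{X}^{[u,[v,w]]}=\mathtt{X}^{[v,[u,w]]}$ and then perform the identical pairwise cancellation of the twelve terms in the antisymmetrized cyclic sum (the paper's displayed list of six equalities is exactly your two relations instantiated). The only organizational difference is that the paper first reduces to the case where the $\mathcal{T}_i$ are sticks via $\boldsymbol{\partial}_{\mathcal{F}_1\circ\mathcal{F}_2}=\boldsymbol{\partial}_{\mathcal{F}_2}\circ\boldsymbol{\partial}_{\mathcal{F}_1}$, which lets it check the pre-Lie identities only for subsets $S,T,U$ rather than inductively for general trees as you propose; your version is equivalent, since your general-tree pre-Lie lemma itself factors through the same reduction.
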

\begin{proof}
We first prove antisymmetry (1). Since the derivative respects forest composition, we have
\[         \boldsymbol{\partial} _{[\mathcal{T}_1,\mathcal{T}_2]}+\boldsymbol{\partial} _{[\mathcal{T}_2,\mathcal{T}_1]}    =  ( \boldsymbol{\partial} _{  [ S_{\mathcal{T}_1} ,  S_{\mathcal{T}_2}  ] }+  \boldsymbol{\partial} _{  [ S_{\mathcal{T}_2} ,  S_{\mathcal{T}_1}  ] } )\circ \boldsymbol{\partial} _{   (\mathcal{T}_1|\mathcal{T}_2  )  } .\footnote{\ recall that $(\mathcal{T}_1|\mathcal{T}_2  )$ denotes the forest containing the trees $\mathcal{T}_1$ and $\mathcal{T}_2$}   \]
Therefore it is enough to check the case where $[\mathcal{T}_1,\mathcal{T}_2]$ is a cut $\mathcal{V}$. Then, rewriting in terms of the dual derivative, we have 
\[   (\boldsymbol{\partial} _{\mathcal{V}}+\boldsymbol{\partial} _{\overline{\mathcal{V}}})f(\mathtt{X})=   f(  \boldsymbol{\partial}^\ast _{\mathcal{V}}\,  \mathtt{X}  + \boldsymbol{\partial}^\ast _{\overline{\mathcal{V}}}\,  \mathtt{X}   ) = f( \mathtt{X}^{\mathcal{V}}-\mathtt{X}^{\overline{\mathcal{V}}}+\mathtt{X}^{\overline{\mathcal{V}}}-\mathtt{X}^{\mathcal{V}} )=f(0)=0      .  \]
We now prove the Jacobi identity (2). Since the derivative respects forest composition, we have
\begin{align*}
&\boldsymbol{\partial} _{[[\mathcal{T}_1,\mathcal{T}_2],\mathcal{T}_3]}   +\boldsymbol{\partial} _{[[\mathcal{T}_3,\mathcal{T}_1],\mathcal{T}_2]}  +\boldsymbol{\partial} _{[[\mathcal{T}_2,\mathcal{T}_3],\mathcal{T}_1]} \\[6pt]
=&(\boldsymbol{\partial} _{[[ S_{\mathcal{T}_1}, S_{\mathcal{T}_2}], S_{\mathcal{T}_3}]}   +\boldsymbol{\partial} _{[[ S_{\mathcal{T}_3}, S_{\mathcal{T}_1}], S_{\mathcal{T}_2}]}  +\boldsymbol{\partial} _{[[ S_{\mathcal{T}_2}, S_{\mathcal{T}_3}], S_{\mathcal{T}_1}]})\circ \boldsymbol{\partial} _{   (\mathcal{T}_1|\mathcal{T}_2|\mathcal{T}_3 ) }. 
\end{align*}
Therefore it is enough to check the case where $[[\mathcal{T}_1,\mathcal{T}_2],\mathcal{T}_3]=  [[ S,T  ],U ] $, for $(S|T|U)$ a partition of $S_m$. Then, rewriting in terms of the dual derivative, we have  
\begin{align*}
 &(\boldsymbol{\partial} _{ [[S,T],U]  } +\boldsymbol{\partial} _{ [[U,S],T]  } +\boldsymbol{\partial} _{ [[T,U],S]  })f(\mathtt{X}) \\[6pt ]
 =   f&(    \boldsymbol{\partial}^\ast_{[[S,T],U]} \mathtt{X}  +   \boldsymbol{\partial}^\ast_{[[U,S],T]} \mathtt{X}+ \boldsymbol{\partial} ^\ast_{[[T,U],S]} \mathtt{X}     ) \\[6pt]
 =f& (         \mathtt{X}^{ [[ S,T ],U ]  }   - \mathtt{X}^{ [[ T,S ],U ]  } -\mathtt{X}^{ [U,[ S,T ] ]  } +\mathtt{X}^{ [U,[ T,S ] ]  }        \\[6pt]
  +&        \mathtt{X}^{ [[ U,S ],T ]  } - \mathtt{X}^{ [[ S,U ],T ]  } -\mathtt{X}^{ [T,[ U,S ] ]  } +\mathtt{X}^{ [T,[ S,U ] ]  }  \\[6pt]
   +&          \mathtt{X}^{ [[ T,U ],S ]  }   - \mathtt{X}^{ [[ U,T ],S ]  } -\mathtt{X}^{ [S,[ T,U ] ]  } +\mathtt{X}^{ [S,[ U,T ] ]  }    )     .   
\end{align*}
Directly from the definition of the dual derivative, we see that
\[   \mathtt{X}^{ [[S,T],U] }=\mathtt{X}^{ [[S,U],T] }, \quad \mathtt{X}^{ [T,[S,U]] }=\mathtt{X}^{ [S,[T,U]] } , \quad  \mathtt{X}^{ [[T,S],U] }=\mathtt{X}^{ [[T,U],S] },  \]
\[   \mathtt{X}^{ [U,[T,S]] }=\mathtt{X}^{ [T,[U,S]] }, \quad \mathtt{X}^{ [[U,S],T] }=\mathtt{X}^{ [[U,T],S] } , \quad  \mathtt{X}^{ [S,[U,T]] }=\mathtt{X}^{ [U,[S,T]] }.  \]
Therefore
\[   f(    \boldsymbol{\partial} ^\ast_{[[S,T],U]} \mathtt{X}  +   \boldsymbol{\partial} ^\ast_{[[U,S],T]} \mathtt{X}+ \boldsymbol{\partial} ^\ast_{[[T,U],S]} \mathtt{X}     )=0. \qedhere  \]
\end{proof}


\section{The Action of Lie Elements on Faces} \label{sec:lunes}

\noindent \hyperlink{foo}{We} describe the forest derivative of \autoref{def:deriv} in the context of the general theory of real hyperplane arrangements, as developed in \cite{aguiar2017topics}. We show that the dual forest derivative is obtained by representing certain Lie elements of the adjoint braid arrangement with layered binary trees, and then composing with the contravariant internal hom-functor which is represented by the zero-dimensional flat. This hom-functor constructs the action of Lie elements on faces. We also show how the dual forest derivative is the geometric analog of the regular right action of the Lie operad on the associative operad. 

\subsection{Categories of Lunes}

Let $\text{T}[I]$ denote the quotient of $\bR^I$ by the relation $\lambda_I=0$, 
\[         \text{T}[I]:=   \bR^I/\bR\lambda_I  .     \]
Thus, to obtain $\text{T}[I]$ from $\bR^I$, we consider functions $\lambda:I\to \bR$ only up to translations of $\bR$. Notice that $\text{T}[I]$ is naturally the $\bR$-linear dual space of $\text{T}^\vee[I]$. Indeed, we have a perfect pairing, given by
\[       \text{T}^\vee[I] \times  \text{T}[I]\to \bR, \qquad (h,\lambda)\mapsto \la  h,\lambda  \ra:= \sum_{i\in I}h_i \,  \lambda(i)    .  \]  
This is well-defined; since $\sum_{i\in I} h_i=0$, we have that translating $\lambda$ does not change the value of the pairing $\la  h,\lambda  \ra$.

We associate to each partition $P=( S_1| \dots| S_k)$ of $I$ the linear subspace $\text{T}[P]$ of $\text{T}[I]$ which is given by
\[         \text{T}[P]:= (\text{T}^\vee[P])^\perp= \big \{  \lambda\in  \text{T}[I]: \la h, \lambda \ra=0\ \text{for all}\ h\in \text{T}^\vee[P]  \big   \}.       \] 
Equivalently, $\text{T}[P]$ consists of those functions $\lambda:I\to \bR$ which are constant on the blocks of $P$. A \emph{reflection hyperplane} is a hyperplane of $\text{T}[I]$ of the form $\text{T}[P]$, which happens if $P$ is a partition with exactly one non-singleton block of cardinality two. The hyperplane arrangement consisting of the reflection hyperplanes in $\text{T}[I]$ is called the \emph{braid arrangement} over $I$, which we denote by $\mathrm{Br}[I]$. The chambers of the braid arrangement are called (type $A$) \emph{Weyl chambers}. The subspaces $\text{T}[P]$, as $P$ ranges over partitions of $I$, are the flats of $\mathrm{Br}[I]$. \medskip

We associate to a cut of the form
\[\mathcal{V}=[C,D]:P \leftarrow Q\]
the closed half-space of $\text{T}[Q]$ which is given by
\[      \big  \{ \lambda \in \text{T}[Q] :  \la h_{i_1 i_2}, \lambda\ra \geq 0 \text{ for any } i_1\in C \text{ and } i_2\in D   \big \}   .                \]
Notice that this associates to $\overline{\mathcal{V}}$ the complementary closed halfspace. Under this association, we may view the category of partitions $\textsf{Lay}_I$ as the category which is freely generated by half-flats of the braid arrangement $\mathrm{Br}[I]$. 
We also associate to $\mathcal{V}$ the closed half-space of $\text{T}^\vee[P]$ which is given by
\[      \big  \{ h\in \text{T}^\vee[P] : \la h, \lambda_{C}\ra\geq 0   \big \}   .                \]
Notice that this associates to $\overline{\mathcal{V}}$ the complementary closed halfspace. \medskip



Associated to any real hyperplane arrangement is its category of lunes \cite[Section 4.4]{aguiar2017topics}, which has objects the flats, and morphisms the so-called lunes. A lune is any intersection of closed half-spaces which is also a chamber for some restriction of the arrangement to an interval of flats. The convention is that the source of a lune is its supporting flat, or `case', and the target of a lune is its base flat. This means that morphisms in the category of lunes travel in the direction of reducing dimension, and increasing codimension. Thus, braid arrangement lunes point in the direction of fusing blocks together (algebra), whereas adjoint braid arrangement lunes point in the direction of cutting blocks (coalgebra). The composition of lunes $L \circ M$ is the unique appropriate lune $N$ such that $L \subseteq  N \subseteq   M$ \cite[Proposition 4.30]{aguiar2017topics}. 

Let $\textsf{Ass}_I$ denote the linearized category of lunes of the braid arrangement $\mathrm{Br}[I]$, and let $\textsf{Ass}^\vee_I$ denote the linearized category of lunes of the adjoint braid arrangement $\mathrm{Br}^\vee[I]$. The structure of $\textsf{Ass}_I$ is essentially that of the associative operad \cite[Section 6.5.10]{aguiar2017topics}. The object $\textsf{Ass}^\vee_I$ seems to be less structured, however for us it is merely the substrate. 

The two associations of half-spaces to cuts, just defined, are functions on the free generators of $\textsf{Lay}_I$ into lunes of slack-$1$, and so define two linear functors
\[   \pi_I: \textsf{Lay}_I \twoheadrightarrow   \textsf{Ass}_I \qquad     \text{and} \qquad           \pi_I^\vee:  \textsf{Lay}^\text{op}_I  \to   \textsf{Ass}^\vee_I.\]
The functor $\pi_I$ is the quotient of $\textsf{Lay}_I$ by delayering and debracketing, i.e. $\pi_I$ sends a layered forest $\mathcal{F}$ to the lune corresponding to the composite set composition of $I$ which forms the canopy of $\mathcal{F}$. In particular, $\pi_I$ is surjective. 

We call a half-flat of the adjoint braid arrangement \emph{semisimple} if both its case and base are semisimple flats of $\text{T}^\vee[I]$. The image of $\pi_I^\vee$ is the subcategory of $\textsf{Ass}^\vee_I$ which is generated by semisimple half-flats.


\subsection{Categories of Lie Elements}

\begin{figure}[t]
	\centering
	\includegraphics[scale=0.6]{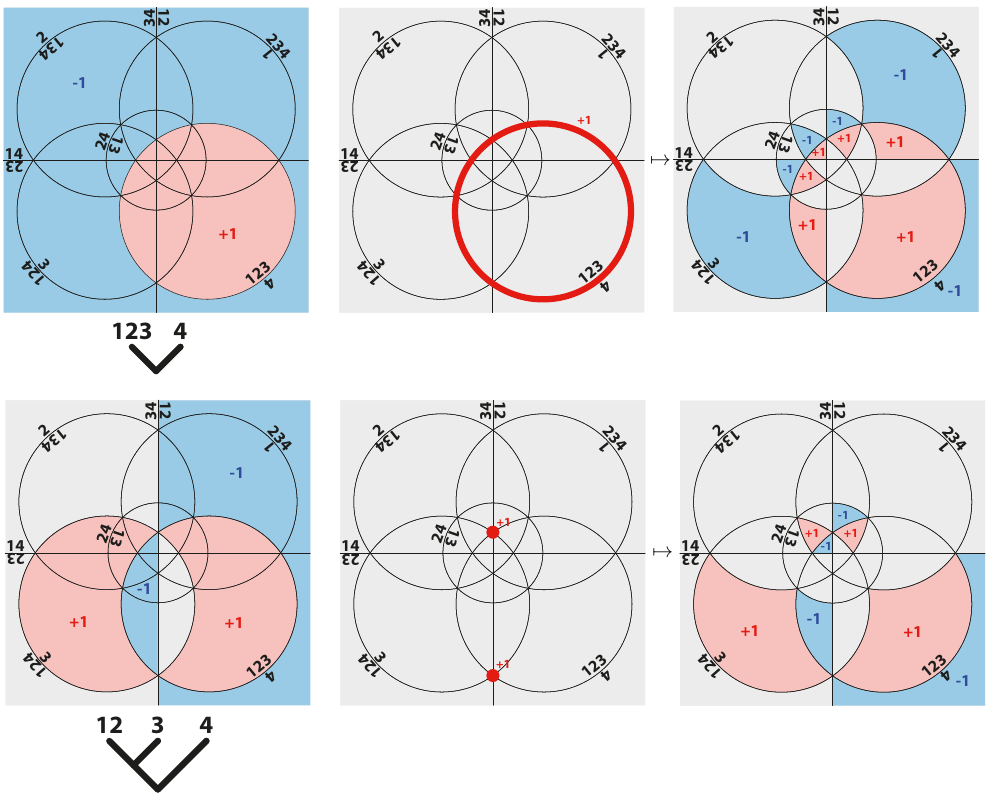}
	\caption{The action of the Lie elements of $[123,4]$ and $[[12,3],4]$ on flats; that is, if a Lie element has source the partition $P$, then we take the action of the Lie element on all the adjoint faces with support $\text{T}^\vee[P]$. The resulting linear combination of adjoint faces was called the antiderivative of a tree by Ocneanu in \cite{oc17}, where it was used to express the tree derivative $\boldsymbol{\partial}_\mathcal{T}$ as an inner product in the unlumped case $\mathcal{T}\in \textbf{Lay}[I]$.}  
	\label{fig:lieelements}
\end{figure}

Associated to any real hyperplane arrangement is its category of Lie elements \cite[Section 10.6.2]{aguiar2017topics}, which is a linear subcategory of the category of lunes. The embedding of Lie elements in lunes generalizes the embedding of the (positive) Lie operad in the associative operad 
\[\textbf{Lie} \hookrightarrow \textbf{Ass},\] 
which is recovered by specializing to the braid arrangement. Note that the category of Lie elements has morphisms not only the Lie elements of the arrangement, but also the Lie elements of all possible restrictions of the arrangement.\footnote{\ analogously, the category of lunes has morphisms not only the chambers, but also the chambers of all possible restrictions of the arrangement}

Let us denote by $\textsf{Lie}_I$ and $\textsf{Lie}^\vee_I$ the categories which are the images of the functors $\pi_I\circ \cA_I$ and $\pi^\vee_I\circ \cA_I$ respectively. Then $\textsf{Lie}_I$ is the category of Lie elements of $\mathrm{Br}[I]$, and $\textsf{Lie}^\vee_I$ is the subcategory of the category of Lie elements of $\mathrm{Br}^\vee[I]$ which is generated by differences of complimentary semisimple half-flats.

Taking certain hom-functors on $\textsf{Ass}_I$ or $\textsf{Ass}^\vee_I$ define two important actions of their lunes, one on chambers under flats (faces), and one on chambers over flats (top-lunes). Let $I$ also denote the partition of $I$ into singletons $(i_1|\dots|i_n)$, and let $(I)$ denote the partition with one block as usual. Then, over the braid arrangement, $\Hom_{  \textsf{Ass}_I }(I,-)$ is the action on top-lunes and $\Hom_{  \textsf{Ass}_I }(-,(I))$ is the action on faces, whereas over the adjoint braid arrangement, $\Hom_{  \textsf{Ass}^\vee_I }((I),-)$ is the action on top-lunes and $\Hom_{  \textsf{Ass}^\vee_I }(-,I)$ is the action on faces.
\bgroup
\def\arraystretch{1.35}  
\begin{table}[H] 
\begin{tabular}{|c|c|c|}
\hline
  &   \begin{tabular}{@{}c@{}}action on top-lunes\end{tabular} & \begin{tabular}{@{}c@{}}action on faces\end{tabular}   \\ \hline
 \begin{tabular}{@{}c@{}}braid arrangement \end{tabular}  &   $\Hom_{  \textsf{Ass}_I }(I,-)$    &  $\Hom_{  \textsf{Ass}_I }(-,(I))$  \\ \hline
 \begin{tabular}{@{}c@{}}adjoint braid arrangement \end{tabular}  &  $\Hom_{  \textsf{Ass}^\vee_I }((I),-)$   &   $\Hom_{  \textsf{Ass}^\vee_I }(-,I)$    \\ \hline
\end{tabular}
\end{table}
\egroup
\noindent We fix our geometric perspective, to consider only the actions on faces. We collect all our functors in the following diagram.

\begin{center}
\begin{tikzcd}[column sep=large,row sep=large] 
  \textsf{Vec}  & \textsf{Ass}_I   \arrow[l,   "\Hom(-\comma (I) )"']  & \textsf{Lay}_I  	\arrow[d, "\cA_I"]   	\arrow[l, rightarrow, "\pi_I"'] 	\arrow[r, rightarrow,  "\pi^\vee_I"]& 	\textsf{Ass}^\vee_I 		\arrow[r,  "\Hom(-\comma I)"] & 	  \textsf{Vec} \\
&     \textsf{Lie}_I    	\arrow[u, hook]     & \textsf{Lay}_I   	\arrow[l, twoheadrightarrow,  "\pi_I"'] 	\arrow[r, twoheadrightarrow,  "\pi^\vee_I"]&  	\textsf{Lie}^\vee_I 	\arrow[u, hook] 
\end{tikzcd}
\end{center}
For any adjoint face $\mathtt{X} \in \textsf{Ass}^\vee_I$ with semisimple support, and any cut $\mathcal{V}$ which produces the partition of the support of $\mathtt{X}$, we have
\[           \mathtt{X}^\mathcal{V}=  \mathtt{X}  \circ  \pi_I^\vee(\mathcal{V})   .\footnote{\ composition is in the category of lunes $\textsf{Ass}^\vee_I$} \] 
Then, it follows directly from the definitions that the actions for the adjoint braid arrangement have the following descriptions,
\begin{align*}
&\textsf{Lay}_I      \xrightarrow{\ \ \pi_I^\vee\ \ }    \textsf{Ass}^\vee_I  \ \xrightarrow{  \Hom(-,I)   }\textsf{Vec},&      \mathcal{F}&\mapsto (\mathtt{X}\mapsto \mathtt{X}^\mathcal{F})  \\ \intertext{and} 
&\textsf{Lay}_I    \xtwoheadrightarrow{\pi_I^\vee\circ  \cA_I}     \textsf{Lie}^\vee_I \   \xrightarrow{  \Hom(-,I)   }\textsf{Vec},&   \mathcal{F}&\mapsto \boldsymbol{\partial}^\ast_\mathcal{F}. 
\end{align*}
In particular, the forest derivative is obtained by composing the functor $\pi_I^\vee\circ  \cA_I$ with linear duality $\textsf{Vec}\to \textsf{Vec}^{\text{op}}$. 

On the braid arrangement side, $\pi_I\circ \cA_I$ is the quotient by antisymmetry, the Jacobi identity, and delayering. The action obtained by composing $\pi_I$, respectively $\pi_I\circ \cA_I$, with $\Hom(-\comma (I) )$ is the structure of the regular \emph{right} action of the associative operad $\Ass$, respectively the Lie operad $\Lie$, on $\Ass$. In particular, by composing $\pi_I\circ \cA_I$ with linear duality $\textsf{Vec}\to \textsf{Vec}^{\text{op}}$, we obtain the structure of the dual right coaction
\[ \textbf{Ass}^\ast \to   \textbf{Ass}^\ast \circ \Lie^\ast  . \] 
This is the geometric analog of the forest derivative on the braid arrangement, in the sense that it is the discrete differentiation of functions on Weyl chambers across reflection hyperplanes. For the regular left actions of $\Ass$ and $\Lie$ on $\Ass$, one should compose $\pi_I$ and $\pi_I\circ \cA_I$ with $\Hom(I, - )$.

On the adjoint braid arrangement side, \autoref{thm:lie} says that $\pi_I^\vee \circ \cA_I$ factors through the Lie axioms of antisymmetry and the Jacobi identity, but not necessarily delayering. We obtain delayering next, by restricting to functionals whose derivatives are semisimple. 

\section{Semisimple Differentiability and the Steinmann Relations} \label{semisimple}

\noindent \hyperlink{foo}{We} study the relationship between the forest derivative and the property of semisimplicity for functionals on adjoint faces. The derivative does not preserve semisimplicity, however we do show that the derivative of a product of functionals factorizes as a product of derivatives. Crucially, by restricting to functionals whose forest derivatives are semisimple, we are able to delayer forests, and thus obtain algebraic structure in species. The Steinmann relations of axiomatic quantum field theory are easily seen to be equivalent to the property that the first derivatives of functionals are semisimple. We show that the Steinmann relations are equivalent to the property that all derivatives are semisimple, i.e. to conclude that all of a functionals derivatives are semisimple, it is enough to check the first derivatives. 

\subsection{Derivatives of Products of Functions} 

\hyperlink{foo}{Let} $\mathcal{F}=( \mathcal{T}_1| \dots| \mathcal{T}_k):P \leftarrow Q$ be a layered forest, and let $P=(S_1|\dots|S_k)$. For $1\leq j \leq k$, let $Q_j$ denote the partition of $S_j$ which is the restriction of $Q$ to $S_j$, and let $^{I \! } Q_j$ denote the partition of $I$ which is the completion of $Q_j$ with singletons. Let us denote by 
\[\boldsymbol{\partial} _{j }:{\textbf{\textsf{L}}^\vee}^\ast[^{I\! } S_j] \to   {\textbf{\textsf{L}}^\vee}^\ast[^{I \! } Q_j] \] 
the derivative with respect to the completion of $\mathcal{T}_j$ with singleton sticks. Under the identifications 
\[
{\textbf{\textsf{L}}^\vee}^\ast[^{I\! } S_j]= {\textbf{\text{L}}^\vee}^\ast[S_j] 
\qquad  \text{and} \qquad 
{\textbf{\textsf{L}}^\vee}^\ast[^{I \! } Q_j]= {\textbf{\textsf{L}}^\vee}^\ast[Q_j],
\] 
the map $\boldsymbol{\partial} _{j }$ is the ordinary derivative 
\[  
\boldsymbol{\partial} _{\mathcal{T}_j }:{\textbf{\text{L}}^\vee}^\ast[S_j] \to       {\textbf{\textsf{L}}^\vee}^\ast[Q_j]
.\] 
Let us denote by $\otimes \boldsymbol{\partial} _ \mathcal{F}$ the tensor product of maps $\bigotimes_j \boldsymbol{\partial} _{_j}$, thus
\[\otimes\boldsymbol{\partial} _\mathcal{F}:  \bigotimes_j {\textbf{\textsf{L}}^\vee}^\ast[^{I\! } S_j]   \to\bigotimes_j{\textbf{\textsf{L}}^\vee}^\ast[^{I \! } Q_j], \qquad   f_1\otimes \dots \otimes f_k \mapsto \boldsymbol{\partial} _{1}   f_1\otimes \dots \otimes \boldsymbol{\partial} _{k}  f_k .	          \]
We now show that one can differentiate a semisimple functional by first differentiating the simple factors, and then taking their product. 

\begin{thm} \label{prop:delayer}
Let $\mathcal{F}=( \mathcal{T}_1| \dots| \mathcal{T}_k):P \leftarrow Q$ be a layered forest, and let $P=(S_1|\dots|S_k)$ be a partition of $I$. Then the following diagram commutes,
\[
\begin{tikzcd}
\bigotimes_j {\textbf{\textsf{L}}^\vee}^\ast[^{I \! } Q_j] \arrow[r, "\textbf{prod}_P"]   & {\textbf{\textsf{L}}^\vee}^\ast[Q]    \\	
{\bigotimes_j {\textbf{\textsf{L}}^\vee}^\ast[^{I\! } S_j]} \arrow[u, "\otimes \boldsymbol{\partial} _{\mathcal{F}}"] \arrow[r, "\textbf{prod}_P"'] &    {\textbf{\textsf{L}}^\vee}^\ast[P].  \arrow[u, "\boldsymbol{\partial} _\mathcal{F}"'] 
\end{tikzcd}
\]
\end{thm}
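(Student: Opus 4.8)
The plan is to check the identity $\boldsymbol{\mu}_P\circ(\otimes\boldsymbol{\partial}_\mathcal{F})=\boldsymbol{\partial}_\mathcal{F}\circ\boldsymbol{\mu}_P$ by evaluating both composites on a pure tensor $\textbf{\textit{f}}=f_1\otimes\dots\otimes f_k$ and on a shard $\mathtt{X}\in\textbf{L}_Q^\vee[I]$. Since all four maps are linear, pure tensors span the lower-left corner, and shards form a basis of $\textbf{L}_Q^\vee[I]$, this suffices. The upper-left composite is immediate from the definitions of $\otimes\boldsymbol{\partial}_\mathcal{F}$ and of the (top) product map:
\[\boldsymbol{\mu}_P\big(\otimes\boldsymbol{\partial}_\mathcal{F}(\textbf{\textit{f}})\big)(\mathtt{X})=\prod_{j}(\boldsymbol{\partial}_{\mathcal{T}_j}f_j)(\mathtt{X}|_{S_j}).\]
For the lower-right composite I would use $\boldsymbol{\partial}_\mathcal{F} g(\mathtt{X})=g(\mathtt{X}^{\cA_I(\mathcal{F})})$ with $g=\boldsymbol{\mu}_P(\textbf{\textit{f}})$, together with the description $\boldsymbol{\mu}_P(\textbf{\textit{f}})(\mathtt{Y})=\prod_j f_j(\mathtt{Y}|_{S_j})$. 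Everything then reduces to understanding how the projections $(\cdot)|_{S_j}$ interact with the raising operations $(\cdot)^{\mathcal{F}'}$ as $\mathcal{F}'$ ranges over the branch configurations of $\mathcal{F}$.

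The technical heart is a projection statement for a single cut. If $\mathcal{V}=[C,D]$ refines a block $B$ with $B\subseteq S_m$ (so the cut lives entirely inside the $m$-th block), then for any shard $\mathtt{Z}$ over the relevant partition one has $(\mathtt{Z}^\mathcal{V})|_{S_{m'}}=\mathtt{Z}|_{S_{m'}}$ for $m'\neq m$, and $(\mathtt{Z}^\mathcal{V})|_{S_m}=(\mathtt{Z}|_{S_m})^\mathcal{V}$, the latter reading $\mathcal{V}$ as a cut within $S_m$. I would prove this directly from the sign formula defining $\mathtt{Z}^\mathcal{V}$ and the sign formula $s_{\mathtt{Z}|_{S_j}}(S)=s_\mathtt{Z}(S\cap S_j)$ defining the projection. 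The one identity to verify is that reduction commutes with intersection in the needed sense, namely $\text{Redn}_{(P')_m}(S)=\text{Redn}_{P'}(S\cap S_m)$, where $(P')_m$ is the completion with singletons of the restriction of the ambient partition $P'$ to $S_m$; granting this, the four cases of the cut-sign formula match term by term, and for $m'\neq m$ the cases $S\cap S_{m'}=C$ or $D\bmod P'$ never occur because $C,D\subseteq S_m$ are disjoint from $S_{m'}$. Iterating this along the cut decomposition of a layered forest $\mathcal{F}'=(\mathcal{T}'_1|\dots|\mathcal{T}'_k)$ (using $\boldsymbol{\partial}^\ast_{\mathcal{F}_1\circ\mathcal{F}_2}=\boldsymbol{\partial}^\ast_{\mathcal{F}_1}\circ\boldsymbol{\partial}^\ast_{\mathcal{F}_2}$, and that cuts in distinct blocks are invisible to one another's projections) yields the key lemma: for every branch configuration $\mathcal{F}'$ of $\mathcal{F}$ and every $j$,
\[(\mathtt{X}^{\mathcal{F}'})|_{S_j}=(\mathtt{X}|_{S_j})^{\mathcal{T}'_j}.\]

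With the key lemma in hand I would finish by expanding $\mathtt{X}^{\cA_I(\mathcal{F})}=\sum_{\mathcal{F}'}\epsilon(\mathcal{F}')\,\mathtt{X}^{\mathcal{F}'}$ over all branch configurations $\mathcal{F}'$, where $\epsilon(\mathcal{F}')$ is the parity of the number of switches, applying the functional $\boldsymbol{\mu}_P(\textbf{\textit{f}})$ term by term, and invoking the lemma to obtain $\boldsymbol{\mu}_P(\textbf{\textit{f}})(\mathtt{X}^{\cA_I(\mathcal{F})})=\sum_{\mathcal{F}'}\epsilon(\mathcal{F}')\prod_j f_j\big((\mathtt{X}|_{S_j})^{\mathcal{T}'_j}\big)$. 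The decisive final step is that a branch configuration of $\mathcal{F}$ is exactly an independent choice of branch configuration $\mathcal{T}'_j$ in each tree, with $\epsilon(\mathcal{F}')=\prod_j\epsilon(\mathcal{T}'_j)$; hence the sum factorizes as $\prod_j\big(\sum_{\mathcal{T}'_j}\epsilon(\mathcal{T}'_j)f_j((\mathtt{X}|_{S_j})^{\mathcal{T}'_j})\big)=\prod_j f_j\big((\mathtt{X}|_{S_j})^{\cA_I(\mathcal{T}_j)}\big)=\prod_j(\boldsymbol{\partial}_{\mathcal{T}_j}f_j)(\mathtt{X}|_{S_j})$, which is the upper-left composite. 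I expect the main obstacle to be the single-cut projection statement itself, i.e. the bookkeeping of the $\bmod P$ reductions across different blocks, rather than the final factorization, which is a clean multilinearity argument once the lemma is available. One caution worth recording is that the naive ``factor-wise'' identity $\boldsymbol{\partial}^\ast_{\mathcal{T}_j}(\mathtt{X}|_{S_j})=(\mathtt{X}^{\cA_I(\mathcal{F})})|_{S_j}$ is false for $k\geq 2$ in general; the factorization only emerges after summing over all branch configurations, before projecting.
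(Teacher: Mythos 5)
Your proposal is correct, but it takes a genuinely different route from the paper. The paper first uses compatibility of both vertical maps with forest composition ($\otimes\boldsymbol{\partial}_{\mathcal{F}_1\circ\mathcal{F}_2}=\otimes\boldsymbol{\partial}_{\mathcal{F}_1}\circ\otimes\boldsymbol{\partial}_{\mathcal{F}_2}$ and the corresponding identity for $\boldsymbol{\partial}$) to reduce the whole diagram to the case where $\mathcal{F}$ is a single cut $\mathcal{V}=[C,D]$ of a block $S_m$; the computation is then a two-term difference $\prod_j f_j(\mathtt{X}^{\mathcal{V}}|_{S_j})-\prod_j f_j(\mathtt{X}^{\overline{\mathcal{V}}}|_{S_j})$, from which the factors $j\neq m$ pull out using exactly your single-cut projection statement $\mathtt{X}^{\mathcal{V}}|_{S_j}=\mathtt{X}|_{S_j}$. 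You instead keep the full forest, invoke the closed form $\boldsymbol{\partial}_\mathcal{F}f(\mathtt{X})=f(\mathtt{X}^{\cA_I(\mathcal{F})})$, upgrade the single-cut statement to the key lemma $(\mathtt{X}^{\mathcal{F}'})|_{S_j}=(\mathtt{X}|_{S_j})^{\mathcal{T}'_j}$ for every branch configuration, and then factorize the alternating sum over branch configurations as a product of per-tree alternating sums. Both arguments rest on the same geometric fact (a cut inside $S_m$ is invisible to the projection onto $S_j$ for $j\neq m$ and commutes with the projection onto $S_m$), so neither is circular; the paper's reduction is shorter because after isolating one cut only one tensor factor changes, while your version makes explicit the multiplicativity of $\cA_I$ over the trees of a forest, which is precisely the mechanism behind the delayering corollary that follows the theorem. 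Your closing caution, that the factor-wise identity $\boldsymbol{\partial}^\ast_{\mathcal{T}_j}(\mathtt{X}|_{S_j})=(\mathtt{X}^{\cA_I(\mathcal{F})})|_{S_j}$ fails for $k\geq 2$ (indeed the right side vanishes after linear extension of the projection, since $\sum_{\mathcal{T}'_i}\epsilon(\mathcal{T}'_i)=0$ for any tree with a node), is accurate and shows you have placed the factorization step at the right point of the argument. One small piece of bookkeeping you should make explicit if you write this up: when iterating the single-cut statement along the cut decomposition, the intermediate partitions are finer than $P$ but coarser than $Q$, so the projection and reduction identities must be stated for a general intermediate partition, not just for $P$ itself; the paper has the mirror-image of this issue in its reduction to a single cut, and both are routine.
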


\begin{proof}
It follows directly from the definition of $\otimes \boldsymbol{\partial} _\mathcal{F}$ that
\[         \otimes \boldsymbol{\partial} _ {\mathcal{F}_1 \circ  \mathcal{F}_2} = \otimes \boldsymbol{\partial} _{\mathcal{F}_1}  \circ \otimes \boldsymbol{\partial} _{\mathcal{F}_2}                  . \]	
Therefore, since every forest is a composition of cuts, it is enough to consider the case where $\mathcal{F}$ is a cut $\mathcal{V}=[C,D]:P \leftarrow Q$ of a block $S_m$ of $P$. Let
\[\textbf{\textit{f}}=  f_1\otimes \dots \otimes f_k  \in  \bigotimes_j {\textbf{\textsf{L}}^\vee}^\ast[^{I\! } S_j]. \]
Put $f=\textbf{prod}_P(\textbf{\textit{f}})$. Then, for each adjoint face $\mathtt{X}\in \textbf{\textsf{L}}^\vee[Q]$, we have
\[    \boldsymbol{\partial} _\mathcal{V}  \big(  \textbf{prod}_P(\textbf{\textit{f}})\big)  (\mathtt{X})=    \boldsymbol{\partial} _\mathcal{V}  f(\mathtt{X})= f(\mathtt{X}^\mathcal{V})- f (\mathtt{X}^{\overline{\mathcal{V}}}) =  \prod_j f_j ( \mathtt{X}^{\mathcal{V}}|_{S_j})-\prod_j f_j (\mathtt{X}^{\overline{\mathcal{V}}}|_{S_j})      .     \]
For $j\neq m$, we have
\[\mathtt{X}^{\mathcal{V}}|_{S_j}=\mathtt{X}|_{S_j}=\mathtt{X}^{\overline{\mathcal{V}}}|_{S_j}.\] 
Therefore we can factor out the terms $j\neq m$, to obtain our required result,
\begin{align*}
    \boldsymbol{\partial} _\mathcal{V}  \big(  \textbf{prod}_P(\textbf{\textit{f}})\big)  (\mathtt{X})=    \boldsymbol{\partial} _\mathcal{V}  f(\mathtt{X}) &=\prod_{j\neq m} f_j(\mathtt{X}|_{S_j})\cdot \big ( f_m(\mathtt{X}^\mathcal{V}|_{S_m})-f_m (\mathtt{X}^{\overline{\mathcal{V}}}|_{S_m})\big ) \\
&=\textbf{prod}_P\Big ( \bigotimes_{j\neq m}  f_j \otimes \boldsymbol{\partial} _{m} f_m \Big ) (\mathtt{X}) \\
&= \textbf{prod}_P\big ( \otimes \boldsymbol{\partial}_\mathcal{V}  \textbf{\textit{f}}\ \big ) (\mathtt{X}) .\qedhere
\end{align*}
\end{proof}


\begin{definition}
A functional $f\in {\textbf{\textsf{L}}^\vee}^\ast[P]$ is called \emph{semisimply differentiable} if the derivative $\boldsymbol{\partial}_\mathcal{F} f$ is a semisimple functional for all forests $\mathcal{F}$ over $P$. 
\end{definition}

Notice that a semisimply differentiable functional is semisimple since, for $\mathcal{F}$ a forest of sticks, the derivative with respect to $\mathcal{F}$ is the identity. Let $\oS^\ast[P]$ denote the subspace of ${\textbf{\textsf{L}}^\vee}^\ast[P]$ of semisimply differentiable functionals. We denote by $\oS[P]$ the quotient of $\textbf{\textsf{L}}^\vee[P]$ which is the linear dual space of $\oS^\ast[P]$. We let
\[      
\oS^\ast[I]:= \oS^\ast\big [(I)\big ] 
\qquad \text{and} \qquad      
\oS[I]:= \oS\big [(I)\big ] 
.\]
Given a subset $S\subseteq I$, we have natural isomorphisms $\oS^\ast[^{I\! } S ] \cong \oS^\ast[S]$ and $\oS[^{I\! } S ] \cong \oS[S]$. 

\begin{cor}\label{cor:delay}
Let $P$ be a partition of $I$. Let $f\in \oS^\ast[P]$ be a semisimply differentiable functional, and let $\mathcal{F}$ be a forest over $P$. Then the forest derivative $\boldsymbol{\partial} _\mathcal{F} f$ does not depend upon the layering of $\mathcal{F}$. 
\end{cor}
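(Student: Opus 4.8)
The plan is to deduce the corollary from \autoref{prop:delayer} by reducing the independence of layering to the commutativity of two ``parallel'' cut derivatives. The starting observation is that a semisimply differentiable functional is in particular semisimple, and, more importantly, that \emph{every} forest derivative $\boldsymbol{\partial}_\mathcal{G} f$ is again semisimple (this is precisely what semisimple differentiability provides). Consequently every intermediate functional arising while we differentiate $f$ cut-by-cut is semisimple, hence lies in the image of the corresponding $\boldsymbol{\mu}$.

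Next I would encode a layering of the fixed underlying delayered forest $\mathcal{F}$ as a linear extension of the partial order on its nodes (ancestor before descendant), which records the order in which the cuts $\mathcal{V}_1\circ\dots\circ\mathcal{V}_l$ are performed. Since any two linear extensions of a finite poset are connected by a sequence of transpositions of two consecutive, incomparable elements, it suffices to show that swapping two adjacent incomparable cuts $\mathcal{V}_i,\mathcal{V}_{i+1}$ leaves $\boldsymbol{\partial}_\mathcal{F} f$ unchanged. Because these cuts are incomparable, their subtrees are disjoint; writing $Q_{i-1}$ for the partition reached after $\mathcal{V}_1,\dots,\mathcal{V}_{i-1}$, all ancestors of each of the two cuts have already been performed and none of the first $i-1$ cuts lies inside either subtree, so the sets cut by $\mathcal{V}_i$ and $\mathcal{V}_{i+1}$ are two \emph{distinct} blocks $A,B$ of $Q_{i-1}$.

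The analytic heart is then a direct application of \autoref{prop:delayer}. Set $g:=\boldsymbol{\partial}_{\mathcal{V}_{i-1}}\cdots\boldsymbol{\partial}_{\mathcal{V}_1} f$, which is semisimple over $Q_{i-1}$, so $g=\boldsymbol{\mu}_{Q_{i-1}}(\textbf{\textit{g}})$. Apply \autoref{prop:delayer} to the forest over $Q_{i-1}$ whose only nontrivial trees are the cut $\mathcal{V}_i$ on block $A$ and the cut $\mathcal{V}_{i+1}$ on block $B$ (sticks elsewhere): the derivative of $g$ with respect to this two-cut forest equals $\boldsymbol{\mu}_{Q_{i-1}}$ applied to $\otimes\boldsymbol{\partial}_\mathcal{F}(\textbf{\textit{g}})$, which carries $\boldsymbol{\partial}_{\mathcal{V}_i}$ on the $A$-factor, $\boldsymbol{\partial}_{\mathcal{V}_{i+1}}$ on the $B$-factor, and the identity elsewhere. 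This tensor product is manifestly independent of the order in which the two cuts are listed, so $\boldsymbol{\partial}_{\mathcal{V}_{i+1}}\boldsymbol{\partial}_{\mathcal{V}_i} g=\boldsymbol{\partial}_{\mathcal{V}_i}\boldsymbol{\partial}_{\mathcal{V}_{i+1}} g$. Composing with the remaining, unchanged cuts $\boldsymbol{\partial}_{\mathcal{V}_{i+2}},\dots,\boldsymbol{\partial}_{\mathcal{V}_l}$ then shows that the two layerings produce the same forest derivative.

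The main obstacle is the combinatorial bookkeeping of the second step: one must verify carefully that two incomparable consecutive cuts act on two genuinely distinct blocks of the intermediate partition $Q_{i-1}$ (so that \autoref{prop:delayer} places them in different tensor factors) and that the swap yields another valid layering. Once this is established, all of the nontrivial content is supplied by \autoref{prop:delayer} — the factorization of the derivative of a semisimple functional across blocks — while the reduction to adjacent swaps is just the standard connectivity of the linear extensions of a finite poset.
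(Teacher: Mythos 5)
Your proof is correct, and it rests on the same key lemma as the paper, namely \autoref{prop:delayer}, but the reduction is organized differently. The paper applies \autoref{prop:delayer} once to the whole forest $\mathcal{F}=(\mathcal{T}_1|\dots|\mathcal{T}_k)$: writing $f=\boldsymbol{\mu}_P(f_1\otimes\dots\otimes f_k)$, the derivative becomes $\boldsymbol{\mu}_P(\boldsymbol{\partial}_{1}f_1\otimes\dots\otimes\boldsymbol{\partial}_{k}f_k)$, which is visibly independent of the interleaving of nodes \emph{between} distinct trees; independence of the layering \emph{within} each tree is then obtained by recursing on the forest of two subtrees below each root cut, a step the paper leaves implicit. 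You instead reduce to adjacent transpositions of incomparable cuts in a linear extension of the node poset and apply \autoref{prop:delayer} only to two-cut forests. This buys you two things: the intra-tree versus inter-tree distinction disappears, since every pair of incomparable nodes is treated uniformly, and the only input you need from semisimple differentiability is that each truncated derivative $\boldsymbol{\partial}_{\mathcal{V}_{i-1}}\cdots\boldsymbol{\partial}_{\mathcal{V}_1}f$ is semisimple, which is immediate from the definition (no appeal to the factorization of $\oS^\ast_P[I]$ is needed). Your bookkeeping also checks out: the first $i-1$ cuts form an order ideal of the node poset, so the truncation is a genuine layered forest over $P$, and incomparability of $\mathcal{V}_i$ and $\mathcal{V}_{i+1}$ forces them to cut disjoint, hence distinct, blocks of $Q_{i-1}$, so they land in different tensor factors exactly as \autoref{prop:delayer} requires.
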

\begin{proof}
We may calculate $\boldsymbol{\partial} _\mathcal{F} f$ according to \autoref{prop:delayer}. The result then follows from the fact that the operator $\otimes \boldsymbol{\partial} _ \mathcal{F}$ does not depend upon the layering between the trees of $\mathcal{F}$.   
\end{proof}


Let $\textsf{Lie}_I$ be as in \autoref{sec:lunes}, that is $\textsf{Lie}_I$ is the linear category which is the quotient of $\textsf{Lay}_I$ by the Lie axioms of antisymmetry and the Jacobi identity, and identifying forests which differ only by their layerings.

\begin{cor}\label{imortcor}
Let $\mathcal{F}$ also denote the image of $\mathcal{F}\in \textsf{Lay}_I$ in the quotient $\textsf{Lie}_I$. Then
\[   \textsf{Lie}^{\text{op}}_I \to \textsf{Vec}, \qquad            P\mapsto \oS^\ast[P]     ,\quad     \mathcal{F} \mapsto \boldsymbol{\partial}_\mathcal{F}   \] 
is a well-defined linear functor, and
\[   \textsf{Lie}_I \to \textsf{Vec}, \qquad             P\mapsto \oS[P]     ,\quad    \mathcal{F} \mapsto \boldsymbol{\partial}^\ast_\mathcal{F}   \] 
is a well-defined linear functor.
\end{cor}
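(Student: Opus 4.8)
The plan is to first exhibit the assignment $P \mapsto \oS^\ast_P[I]$, $\mathcal{F} \mapsto \boldsymbol{\partial}_\mathcal{F}$ as a restriction of the contravariant functor $\textsf{Lay}^\text{op}_I \to \textsf{Vec}$ already constructed in \autoref{sec:deriv}, then to argue that this restriction factors through the quotient $\textsf{Lay}_I \twoheadrightarrow \textsf{Lie}_I$. The second functor will then be obtained for free by applying linear duality.

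First I would check that the forest derivative preserves semisimple differentiability, i.e. that $\boldsymbol{\partial}_\mathcal{F}$ maps the subspace $\oS^\ast_P[I]$ into $\oS^\ast_Q[I]$ whenever $\mathcal{F}:P\leftarrow Q$. This is the one genuinely new verification, and it follows immediately from the composition law $\boldsymbol{\partial}_{\mathcal{F}_1\circ \mathcal{F}_2}=\boldsymbol{\partial}_{\mathcal{F}_2}\circ \boldsymbol{\partial}_{\mathcal{F}_1}$: for $f\in \oS^\ast_P[I]$ and any forest $\mathcal{H}:Q\leftarrow R$, one has $\boldsymbol{\partial}_\mathcal{H}(\boldsymbol{\partial}_\mathcal{F} f)=\boldsymbol{\partial}_{\mathcal{F}\circ \mathcal{H}}f$, and since $\mathcal{F}\circ \mathcal{H}$ is a forest over $P$, this is semisimple by the definition of semisimple differentiability. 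Hence $\boldsymbol{\partial}_\mathcal{F} f$ is itself semisimply differentiable. Together with the facts that the derivative respects composition and sends forests of sticks to the identity, this shows that $P\mapsto \oS^\ast_P[I]$, $\mathcal{F}\mapsto \boldsymbol{\partial}_\mathcal{F}$ defines a well-defined functor $\textsf{Lay}^\text{op}_I \to \textsf{Vec}$.

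Next I would show this functor descends to $\textsf{Lie}^\text{op}_I$. Since $\textsf{Lie}_I$ is the quotient of $\textsf{Lay}_I$ by antisymmetry, the Jacobi identity, and delayering, and since the derivative respects composition, it suffices to check that $\boldsymbol{\partial}$ respects these three families of generating relations once restricted to semisimply differentiable functionals. Antisymmetry and the Jacobi identity hold already as identities of linear maps on all of $\Hom(\textbf{L}_P^\vee[I],\Bbbk)$ by \autoref{thm:lie}, hence in particular on $\oS^\ast_P[I]$. The delayering relation is exactly the content of \autoref{cor:delay}: on semisimply differentiable functionals, $\boldsymbol{\partial}_\mathcal{F}$ depends only on the underlying delayered forest. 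Thus all three relations are respected, and the functor factors through $\textsf{Lay}^\text{op}_I \twoheadrightarrow \textsf{Lie}^\text{op}_I$, giving the first claim.

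Finally, the second functor is the linear dual of the first. Applying the contravariant functor $\Hom(-,\Bbbk)\colon \textsf{Vec}\to\textsf{Vec}$ turns the contravariant functor on $\textsf{Lie}_I$ into a covariant one, sending $P\mapsto \Hom(\oS^\ast_P[I],\Bbbk)=\oS_P[I]$ and $\mathcal{F}\mapsto (\boldsymbol{\partial}_\mathcal{F})^\ast = \boldsymbol{\partial}^\ast_\mathcal{F}$; concretely, that $\boldsymbol{\partial}^\ast_\mathcal{F}$ descends to the quotient $\oS_Q[I]$ of $\textbf{L}_Q^\vee[I]$ is dual to the preservation statement of the first step, since $g(\boldsymbol{\partial}^\ast_\mathcal{F}\mathtt{X})=(\boldsymbol{\partial}_\mathcal{F} g)(\mathtt{X})$ for $g\in\oS^\ast_P[I]$, and $\boldsymbol{\partial}_\mathcal{F} g$ again lies in $\oS^\ast_Q[I]$. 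I expect the main (and essentially only) obstacle to be the preservation step, though it reduces to a one-line application of the composition law; everything else is an assembly of \autoref{thm:lie} and \autoref{cor:delay} together with formal duality.
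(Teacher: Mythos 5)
Your proposal is correct and follows the same route the paper intends: the corollary is stated there without proof, as an immediate assembly of \autoref{thm:lie} (antisymmetry and Jacobi), \autoref{cor:delay} (independence of layering on semisimply differentiable functionals), and formal duality. Your explicit verification that $\boldsymbol{\partial}_\mathcal{F}$ carries $\oS^\ast_P[I]$ into $\oS^\ast_Q[I]$ via the composition law $\boldsymbol{\partial}_{\mathcal{F}\circ\mathcal{H}}f=\boldsymbol{\partial}_\mathcal{H}(\boldsymbol{\partial}_\mathcal{F}f)$ is a point the paper leaves implicit, and it is exactly right.
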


The functor $\mathcal{F} \mapsto \boldsymbol{\partial}_\mathcal{F}$ provides the structure for a Lie coalgebra in species, and the functor $ \mathcal{F}\mapsto \boldsymbol{\partial}^\ast_\mathcal{F}$ provides the structure for its dual Lie algebra in species (see \autoref{sec:coalg}).

\begin{thm}\label{cor1}
Let $P=(S_1|\dots |S_k)$ be a partition of $I$. Then the restricted external product
\[             \oS^\ast[^{I\! }  S_1]\otimes \dots \otimes  \oS^\ast[^{I\! }  S_k]\to             \oS^\ast[P]  
, \qquad  
\textbf{\textit{f}}\mapsto \textbf{prod}_P(\textbf{\textit{f}})  \]
is a well-defined isomorphism of vector spaces.
\end{thm}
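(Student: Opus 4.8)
The plan is to leverage the injectivity of $\boldsymbol{\mu}_P$ together with the compatibility between the forest derivative and the product recorded in \autoref{prop:delayer}. Since $\boldsymbol{\mu}_P$ is already injective (being the linear dual of the surjection $\boldsymbol{\Delta}_P$ of \autoref{prop:sur}), and visibly linear, only two things remain: well-definedness, i.e. that $\boldsymbol{\mu}_P$ carries $\oS_{S_1}^\ast[I]\otimes\dots\otimes\oS_{S_k}^\ast[I]$ into $\oS^\ast_P[I]$; and surjectivity onto $\oS^\ast_P[I]$. Throughout I abbreviate $V_j:=\Hom(\textbf{L}^\vee[S_j],\Bbbk)$, and for a forest $\mathcal{F}=(\mathcal{T}_1|\dots|\mathcal{T}_k):P\leftarrow Q$ I keep the notation $Q_j$ of \autoref{prop:delayer} for the completion with singletons of the restriction of $Q$ to $S_j$, writing $W_j\subseteq\Hom(\textbf{L}_{Q_j}^\vee[I],\Bbbk)$ for the subspace of $Q_j$-semisimple functionals.

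The one structural lemma I would isolate first is the coassociativity of products, $\boldsymbol{\mu}_Q=\boldsymbol{\mu}_P\circ(\boldsymbol{\mu}_{Q_1}\otimes\dots\otimes\boldsymbol{\mu}_{Q_k})$, where $\boldsymbol{\mu}_Q$ and $\boldsymbol{\mu}_{Q_j}$ denote the full factorizations into simple functionals. This follows directly from the definition of products, since for a shard $\mathtt{X}\in\textbf{L}_Q^\vee[I]$ and a block $B$ of $Q$ lying inside $S_j$ one has $(\mathtt{X}|_{S_j})|_B=\mathtt{X}|_B$, so projecting onto the simple factors in two stages agrees with projecting all at once. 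Combined with the injectivity of all the maps involved, this yields the dictionary that is the technical heart of the argument: the $Q$-semisimple functionals on $\textbf{L}_Q^\vee[I]$ are exactly $\boldsymbol{\mu}_P(W_1\otimes\dots\otimes W_k)$. Indeed every $Q$-semisimple functional is $\boldsymbol{\mu}_Q$ of a tensor of simple functionals, hence lies in $\boldsymbol{\mu}_P(W_1\otimes\dots\otimes W_k)$; conversely $\boldsymbol{\mu}_P(W_1\otimes\dots\otimes W_k)\subseteq\mathrm{image}(\boldsymbol{\mu}_Q)$ is $Q$-semisimple.

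For well-definedness it suffices, by linearity and since $\boldsymbol{\mu}_P^{-1}(\oS^\ast_P[I])$ is a subspace, to treat a pure tensor $f_1\otimes\dots\otimes f_k$ with each $f_j\in\oS_{S_j}^\ast[I]$. For an arbitrary forest $\mathcal{F}:P\leftarrow Q$, \autoref{prop:delayer} gives $\boldsymbol{\partial}_\mathcal{F}\boldsymbol{\mu}_P(f_1\otimes\dots\otimes f_k)=\boldsymbol{\mu}_P(\boldsymbol{\partial}_{\mathcal{T}_1}f_1\otimes\dots\otimes\boldsymbol{\partial}_{\mathcal{T}_k}f_k)$; each $\boldsymbol{\partial}_{\mathcal{T}_j}f_j$ lies in $W_j$ by definition of $\oS_{S_j}^\ast[I]$, so by the dictionary the derivative is $Q$-semisimple. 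As this holds for every $\mathcal{F}$, the image is semisimply differentiable. For surjectivity I would start from $f\in\oS_P^\ast[I]$, which is semisimple (apply the definition to the forest of sticks), so $f=\boldsymbol{\mu}_P(\textbf{\textit{f}})$ for a unique $\textbf{\textit{f}}\in\bigotimes_j V_j$; the goal is $\textbf{\textit{f}}\in\bigotimes_j\oS_{S_j}^\ast[I]$. Fixing $j$ and a tree $\mathcal{T}_j$ over $S_j$, and differentiating along the forest carrying $\mathcal{T}_j$ in slot $j$ and sticks elsewhere, \autoref{prop:delayer} shows $\boldsymbol{\partial}_{\mathcal{F}}f=\boldsymbol{\mu}_P\big((\mathrm{id}\otimes\dots\otimes\boldsymbol{\partial}_{\mathcal{T}_j}\otimes\dots\otimes\mathrm{id})(\textbf{\textit{f}})\big)$, which is $Q$-semisimple; since here $W_i=V_i$ for $i\neq j$ (those blocks are not cut), the dictionary and injectivity of $\boldsymbol{\mu}_P$ force $(\mathrm{id}\otimes\dots\otimes\boldsymbol{\partial}_{\mathcal{T}_j}\otimes\dots\otimes\mathrm{id})(\textbf{\textit{f}})\in\bigotimes_{i\neq j}V_i\otimes W_j$. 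Using the elementary fact that over a field $\ker(\mathrm{id}\otimes\psi)=(\bigotimes_{i\neq j}V_i)\otimes\ker\psi$, and intersecting over all trees $\mathcal{T}_j$, I obtain $\textbf{\textit{f}}\in\bigotimes_{i\neq j}V_i\otimes\oS_{S_j}^\ast[I]$; intersecting over all $j$ (using $\bigcap_j V_1\otimes\dots\otimes W_j\otimes\dots\otimes V_k=\bigotimes_j W_j$, valid over a field) gives $\textbf{\textit{f}}\in\bigotimes_j\oS_{S_j}^\ast[I]$.

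I expect the main obstacle to be setting up the dictionary cleanly, namely verifying the coassociativity of the product maps and checking that the several distinct notions of semisimplicity in play ($P$-semisimple relative to $Q$, versus $Q$-semisimple, versus factorwise $Q_j$-semisimple) line up correctly through the chain of injective maps. Once this bookkeeping is in place, both inclusions reduce to the routine tensor-product facts about kernels and intersections of subspaces over a field; all spaces are finite-dimensional since $I$ is finite, so no care about infinite intersections is needed.
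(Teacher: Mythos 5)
Your proof is correct, and the surjectivity half takes a genuinely different route from the paper. The well-definedness argument is the same in both: apply \autoref{prop:delayer} to a pure tensor and observe that a product of semisimple functionals is semisimple (your ``dictionary,'' resting on the coassociativity $\boldsymbol{\mu}_Q=\boldsymbol{\mu}_P\circ(\boldsymbol{\mu}_{Q_1}\otimes\dots\otimes\boldsymbol{\mu}_{Q_k})$ and on $\mathrm{image}(\otimes_j\boldsymbol{\mu}_{Q_j})=\otimes_j\,\mathrm{image}(\boldsymbol{\mu}_{Q_j})$, is exactly the bookkeeping the paper leaves implicit, and it does check out: the shard identity $(\mathtt{X}|_{S_j})|_B=\mathtt{X}|_B$ follows from the sign formula for projections). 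For surjectivity, the paper first reduces to the case where $f$ is a \emph{single} product of simple functionals and then argues by contradiction, using \autoref{prop:sur} to manufacture explicit shards $\mathtt{Y}_1,\mathtt{Y}_2$ with equal projections on which $\boldsymbol{\partial}_{\mathcal{T}_m}f$ takes different values. You instead keep the full (possibly non-pure) tensor $\textbf{\textit{f}}=\boldsymbol{\mu}_P^{-1}(f)$ and extract $\textbf{\textit{f}}\in\bigotimes_j\oS^\ast_{S_j}[I]$ by purely linear-algebraic means, via $(\mathrm{id}\otimes\psi)^{-1}(U\otimes W)=U\otimes\psi^{-1}(W)$ and $\bigcap_j V_1\otimes\dots\otimes W_j\otimes\dots\otimes V_k=\bigotimes_j W_j$. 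Your version buys something real: the paper's reduction ``we may assume that $f$ is a product of simple functionals'' is not automatic, since semisimple differentiability of a sum of products does not obviously pass to the individual summands, and your argument sidesteps this entirely (and also dispenses with the nonvanishing hypothesis and the choice of witness shards $\mathtt{Z}_j$). What the paper's approach buys in exchange is geometric concreteness --- it exhibits the failure of semisimplicity on actual faces of the arrangement, in the same style as the proof of \autoref{main}. One small presentational point: when you invoke the dictionary for the forest with sticks in the slots $i\neq j$, it is worth saying explicitly that $Q_i$ is then the completion of the single block $S_i$ with singletons, so that $\boldsymbol{\mu}_{Q_i}$ is the identity and $W_i=V_i$ holds for the trivial reason that every functional is semisimple with respect to a one-block partition; you do note this, and it is the one place where the several notions of semisimplicity could otherwise be conflated.
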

\begin{proof}
The map is well-defined since for any $\textbf{\textit{f}}\in \bigotimes_j \oS^\ast[^{I\! } S_j ]$, the product $\textbf{prod}_P(\textbf{\textit{f}})$ is semisimply differentiable by \autoref{prop:delayer}. We have already seen that $\textbf{prod}_P$ is injective. 

For surjectivity, let $f\in \oS^\ast[P]$ be any nonzero functional. In particular, $f$ is semisimple. Since semisimple functionals are superpositions of external products of simple functionals, we may assume that $f$ is an external product of simple functionals. Thus, let 
\[       
\textbf{\textit{f}} = f_1\otimes \dots \otimes f_k \in   {\textbf{\textsf{L}}^\vee}^\ast[^{I\! }S_1]    \otimes \dots \otimes  {\textbf{\textsf{L}}^\vee}^\ast[^{I\! }S_k] 
\qquad \text{such that} \qquad \textbf{prod}_P(\textbf{\textit{f}})=f 
. \]
Simple does not imply semisimply differentiable, so we still need to show that 
\[
f_m\in \oS^\ast[^{I\! } S_m] \qquad \text{for each}\quad  1\leq m \leq k     
.\] 
Let $\mathcal{T}_m$ be a tree over the block $S_m$ of $P$. We have
\[  
\boldsymbol{\partial} _{\mathcal{T}_m} f=    \underbrace{\boldsymbol{\partial} _{\mathcal{T}_m} \textbf{prod}_P(    \textbf{\textit{f}} )=  \textbf{prod}_P( \otimes \boldsymbol{\partial}_{\mathcal{T}_m}   \textbf{\textit{f}} )}_{\text{\autoref{prop:delayer}}}=    \textbf{prod}_P  (  f_1\otimes \dots \otimes f_{m-1} \otimes \boldsymbol{\partial}_{\mathcal{T}_m } f_m \otimes f_{m+1} \otimes \dots \otimes f_k) 
.\]
Since $f$ is semisimply differentiable, we have that $ \boldsymbol{\partial} _{\mathcal{T}_m} f$ must be semisimple. Towards a contradiction, suppose that $\boldsymbol{\partial}_{\mathcal{T}_m}  f_m$ is not semisimple. Let $^{I\! }P_m$ denote the partition of $I$ which is the completion of the labels of the leaves of $\mathcal{T}_m$ with singletons, so that 
\[
\boldsymbol{\partial}_{\mathcal{T}_m}  f_m\in    {\textbf{\textsf{L}}^\vee}^\ast[^{I\! }P_m]
.\]  
If $\boldsymbol{\partial}_{\mathcal{T}_m}  f_m$ is not semisimple, there exist adjoint faces $\mathtt{X}_{1},\mathtt{X}_{2}\in \textbf{\textsf{L}}^\vee[^{I\! }P_m]$ with 
\[   \textbf{proj}_{ P_m }(\mathtt{X}_{1})= \textbf{proj}_{ P_m }(\mathtt{X}_{2}) \qquad   \text{but} \quad  \ \     \boldsymbol{\partial}_{\mathcal{T}_m}  f_m(\mathtt{X}_{1})\neq \boldsymbol{\partial}_{\mathcal{T}_m}  f_m(\mathtt{X}_{2}).\] 
We have $f_j\neq 0$ for $1\leq j \leq k$, since we assumed that $f$ is nonzero. Therefore there exists a family of adjoint faces 
\[\mathtt{Z}_j\in \textbf{\textsf{L}}^\vee[^{I\! } S_j],\quad  1\leq j \leq k,  \qquad   \text{such that} \qquad    f_j(\mathtt{Z}_j)\neq 0.\] 
Let $Q$ denote the partition of $I$ which is the completion of the labels of the leaves of $\mathcal{T}_m$ with the blocks of $P$. By \autoref{prop:sur}, there exists an adjoint face $\mathtt{Y}_1 \in \textbf{\textsf{L}}^\vee[Q]$ such that
\[     \textbf{proj}_P(\mathtt{Y}_1)= \mathtt{Z}_1\otimes \dots \otimes  \mathtt{Z}_{m-1}\otimes \mathtt{X}_1\otimes  \mathtt{Z}_{m+1} \otimes   \dots \otimes \mathtt{Z}_m  .   \] 
Similarly, there exists an adjoint face $\mathtt{Y}_2 \in \textbf{\textsf{L}}^\vee[Q]$ such that
\[\textbf{proj}_P(   \mathtt{Y}_2)=\mathtt{Z}_1\otimes \dots \otimes  \mathtt{Z}_{m-1}\otimes \mathtt{X}_2\otimes  \mathtt{Z}_{m+1} \otimes   \dots \otimes \mathtt{Z}_m    .            \] 
Then 
\begin{align*}
 \textbf{proj}_Q(\mathtt{Y}_1) =&\mathtt{Z}_1\otimes \dots \otimes  \mathtt{Z}_{m-1}\otimes \textbf{proj}_{P_m}( \mathtt{X}_1)\otimes  \mathtt{Z}_{m+1} \otimes   \dots \otimes \mathtt{Z}_m\\[4pt]
=& \mathtt{Z}_1\otimes \dots \otimes  \mathtt{Z}_{m-1}\otimes \textbf{proj}_{P_m}(\mathtt{X}_2)\otimes  \mathtt{Z}_{m+1} \otimes   \dots \otimes \mathtt{Z}_m=  \textbf{proj}_Q(\mathtt{Y}_2)
\end{align*}
and  
\begin{align*} 
 \boldsymbol{\partial} _{\mathcal{T}_m} f(\mathtt{Y}_1)= &f_1(\mathtt{Z}_1) \dots  f_{m-1}(\mathtt{Z}_{m-1})\cdot  \boldsymbol{\partial}_{\mathcal{T}_m}  f_m(\mathtt{X}_{1}) \cdot f_{m+1} (\mathtt{Z}_{m+1}) \dots  f_k(\mathtt{Z}_k)  \\[4pt]
\neq &f_1(\mathtt{Z}_1) \dots f_{m-1}(\mathtt{Z}_{m-1})\cdot \boldsymbol{\partial}_{\mathcal{T}_m}  f_m(\mathtt{X}_{2}) \cdot f_{m+1} (\mathtt{Z}_{m+1}) \dots  f_k(\mathtt{Z}_k)= \boldsymbol{\partial} _{\mathcal{T}_m} f(\mathtt{Y}_2).
\end{align*}
This contradicts the fact that $ \boldsymbol{\partial} _{\mathcal{T}_m} f$ is semisimple. Therefore $f_m\in \oS^\ast[^{I\! } S_m]$ for all $1\leq m \leq k$, and so 
\[
f\in \textbf{prod}_P\big (  \oS^\ast[^{I\! } S_1]\otimes  \cdots \otimes  \oS^\ast[^{I\! } S_k] \big )
. \qedhere \]
\end{proof}



\begin{cor}
Let $P=(S_1|\dots |S_k)$ be a partition of $I$. Then
\[       
 \oS[P] \to     \oS[ ^{ I\! } S_1 ]\otimes \dots \otimes  \oS[^{ I\! } S_k] 
, \qquad    
\mathtt{D}\mapsto  \textbf{proj}_P(\mathtt{D})  
\]
is well-defined and is an isomorphism.
\end{cor}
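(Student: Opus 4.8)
The plan is to observe that this corollary is simply the linear dual of Theorem~\ref{cor1}, and to transport the isomorphism there across the duality that defines $\oS_P[I]$ and the $\oS_{S_j}[I]$. First I would record that every $\textbf{L}_Q^\vee[I]$ is finite-dimensional, being spanned by the finite set of shards of the appropriate support; hence all spaces below are finite-dimensional and canonically identified with their double duals. Then I would recall the two structural facts already in place: by construction $\boldsymbol{\mu}_P$ is the linear dual of $\boldsymbol{\Delta}_P$, so that $\boldsymbol{\mu}_P^\ast = \boldsymbol{\Delta}_P$; and $\oS_P[I]$ (respectively $\oS_{S_j}[I]$) is by definition the dual of the subspace $\oS^\ast_P[I]\hookrightarrow \Hom(\textbf{L}_P^\vee[I],\Bbbk)$ (respectively $\oS^\ast_{S_j}[I]\hookrightarrow \Hom(\textbf{L}_{S_j}^\vee[I],\Bbbk)$), realized as a quotient map $q_P\colon \textbf{L}_P^\vee[I]\twoheadrightarrow \oS_P[I]$ (respectively $q_{S_j}$) which is the linear dual of that inclusion.

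Next I would phrase Theorem~\ref{cor1} as the statement that the isomorphism $\Phi\colon \bigotimes_j \oS^\ast_{S_j}[I]\xrightarrow{\ \sim\ } \oS^\ast_P[I]$ is the restriction of $\boldsymbol{\mu}_P$, i.e.\ that $\iota_P\circ \Phi = \boldsymbol{\mu}_P\circ(\bigotimes_j \iota_{S_j})$ for the inclusions $\iota$. Dualizing this commuting square reverses every arrow and, using that finite-dimensional duality carries a tensor product to the tensor product of duals, sends the inclusions to the quotient maps $q_P$ and $\bigotimes_j q_{S_j}$, sends $\boldsymbol{\mu}_P$ to $\boldsymbol{\Delta}_P$, and sends the isomorphism $\Phi$ to an isomorphism $\Phi^\ast$. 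This yields the commuting square
\[
\begin{tikzcd}
\textbf{L}_P^\vee[I] \arrow[r, "\boldsymbol{\Delta}_P"] \arrow[d, twoheadrightarrow, "q_P"'] & \bigotimes_j \textbf{L}_{S_j}^\vee[I] \arrow[d, twoheadrightarrow, "\bigotimes_j q_{S_j}"] \\
\oS_P[I] \arrow[r, dashed, "\Phi^\ast"'] & \bigotimes_j \oS_{S_j}[I].
\end{tikzcd}
\]

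I would then read the conclusion off this square. Its commutativity, $\Phi^\ast\circ q_P = (\bigotimes_j q_{S_j})\circ \boldsymbol{\Delta}_P$, says exactly that $(\bigotimes_j q_{S_j})\circ \boldsymbol{\Delta}_P$ kills $\ker q_P$ and therefore descends to the quotient; this is precisely the well-definedness of $\mathtt{D}\mapsto \boldsymbol{\Delta}_P(\mathtt{D})$, which makes sense only after the representative-independent passage to $\bigotimes_j \oS_{S_j}[I]$. The descended map is $\Phi^\ast$, and it is an isomorphism because the transpose of an isomorphism of finite-dimensional spaces is again one. I do not anticipate a genuine obstacle here, since all the mathematical content is already carried by Theorem~\ref{cor1}; the only point demanding care is the bookkeeping of the duality identifications, in particular verifying that $(\bigotimes_j \iota_{S_j})^\ast = \bigotimes_j q_{S_j}$ under the natural isomorphism $(A\otimes B)^\ast\cong A^\ast\otimes B^\ast$, which is standard in finite dimensions.
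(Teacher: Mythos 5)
Your proposal is correct and is essentially the paper's own proof, which consists of the single sentence that the corollary is the linear dual of the preceding theorem; you have merely made the duality bookkeeping (finite-dimensionality, $\boldsymbol{\mu}_P^\ast=\boldsymbol{\Delta}_P$, and the identification of the dual of an inclusion with the corresponding quotient map) explicit.
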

\begin{proof}
This is the linear dual of \autoref{cor1}.
\end{proof}



\subsection{The Steinmann Relations} \label{sec:The Steinmann Relations}

\hyperlink{foo}{We} now characterize the subspace of semisimply differentiable functionals 
\[\oS^\ast[I]\hookrightarrow {\textbf{\text{L}}^\vee}^\ast[I]\] 
by describing a set of relations which generate the kernel of its linear dual map $\textbf{L}^\vee[I]\twoheadrightarrow \oS[I]$.


\begin{definition}\label{def:stein}
Let $\mathcal{V}:(I) \leftarrow (S|T)$ be a cut of $I$ into two blocks, and let $\mathtt{X}_1,\mathtt{X}_2\in \textbf{\textsf{L}}^\vee[S|T]$ be Steinmann adjacent adjoint faces. We call a relation of the form
\[\mathtt{X}_1^\mathcal{V} -\mathtt{X}_1^{\overline{\mathcal{V}}} +\mathtt{X}_2^{\overline{\mathcal{V}}} -\mathtt{X}_2^\mathcal{V}=0\]
a \emph{Steinmann relation} over $I$. 
\end{definition}

This coincides with the definition of Steinmann relations in axiomatic quantum field theory, e.g. \cite[p. 827-828]{streater1975outline}, \cite[Section 4.3]{epstein1976general}. Ocneanu discusses the Steinmann relations in \cite[\href{https://youtu.be/mqHXVDwTGlI?t=2715}{Lecture 36, 45:15}]{oc17}. 

For $f\in {\textbf{\text{L}}^\vee}^\ast[I]$, directly from the definitions we see that $\boldsymbol{\partial} _\mathcal{V} f$ is semisimple if and only if
\[      f(   \mathtt{X}_1^\mathcal{V} -\mathtt{X}_1^{\overline{\mathcal{V}}} +\mathtt{X}_2^{\overline{\mathcal{V}}} -\mathtt{X}_2^\mathcal{V})  =   0  \]
for all Steinmann adjacent adjoint faces $\mathtt{X}_1,\mathtt{X}_2\in  \textbf{\textsf{L}}^\vee[S|T]$. Let 
\[    \textbf{Stein}[I]:=  \big \la   \mathtt{X}_1^\mathcal{V} -\mathtt{X}_1^{\overline{\mathcal{V}}} +\mathtt{X}_2^{\overline{\mathcal{V}}} -\mathtt{X}_2^\mathcal{V} \big     \ra \subset   \textbf{L}^\vee[I]   ,         \]
where $\mathcal{V}:(I) \leftarrow (S|T)$ ranges over cuts of $I$, and $\mathtt{X}_1,\mathtt{X}_2\in  \textbf{\textsf{L}}^\vee[S|T]$ range over Steinmann adjacent adjoint faces. Then $f\in  {\textbf{\text{L}}^\vee}^\ast[I]$ has semisimple first derivatives if and only if
\[
f\in  \Hom \Big ( \bigslant{ \textbf{L}^\vee[I] }{  \textbf{Stein}[I]    } \,    ,  \, \Bbbk \Big)    
 .\] 
The following result shows that this is sufficient to conclude that $f$ is semisimply differentiable. 

\begin{thm}\label{main}
Let $f \in {\textbf{\text{L}}^\vee}^\ast[I]$ be a functional on adjoint chambers. Then $f$ is semisimply differentiable if (and only if) the first derivatives of $f$ are semisimple. Thus,
\[\oS[I] =   \bigslant{ \textbf{L}^\vee[I] }{  \textbf{Stein}[I]     }    .\]  
\end{thm}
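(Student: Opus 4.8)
The plan is to establish the nontrivial implication: if every first derivative $\boldsymbol{\partial}_\mathcal{V} f$ is semisimple, then $\boldsymbol{\partial}_\mathcal{T} f$ is semisimple for every tree $\mathcal{T}$ over $I$ (these being exactly the forests over $(I)$). I would argue by induction on $n=|I|$, the cases $n\le 1$ being vacuous. Given $\mathcal{T}=[\mathcal{T}_S,\mathcal{T}_T]$ with root cut $\mathcal{V}_1=[S,T]$, write $\boldsymbol{\partial}_\mathcal{T} f=\boldsymbol{\partial}_{(\mathcal{T}_S\mid \mathcal{T}_T)}(\boldsymbol{\partial}_{\mathcal{V}_1}f)$. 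By hypothesis $\boldsymbol{\partial}_{\mathcal{V}_1}f$ is semisimple, so $\boldsymbol{\partial}_{\mathcal{V}_1}f=\boldsymbol{\mu}_{(S|T)}(\mathbf{g})$ for some $\mathbf{g}=\sum_\alpha g_\alpha\otimes h_\alpha$; passing to a minimal (rank) decomposition I may assume both $\{g_\alpha\}$ and $\{h_\alpha\}$ are linearly independent. Then \autoref{prop:delayer} gives $\boldsymbol{\partial}_\mathcal{T} f=\boldsymbol{\mu}_{(S|T)}\big(\sum_\alpha \boldsymbol{\partial}_{\mathcal{T}_S}g_\alpha\otimes \boldsymbol{\partial}_{\mathcal{T}_T}h_\alpha\big)$. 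Since $\boldsymbol{\mu}_{(S|T)}$ of semisimple factors is again semisimple, it suffices to show every $g_\alpha$ and every $h_\alpha$ is semisimply differentiable; as $|S|,|T|<n$, the inductive hypothesis reduces this to showing that these factors have semisimple first derivatives.

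So the heart of the matter is a sub-claim: the factors of a semisimple first derivative themselves have semisimple first derivatives. Fix a cut $[S_1,S_2]$ of $S$. Differentiating in the $S$-slot and using \autoref{prop:delayer}, injectivity of $\boldsymbol{\mu}$, and the linear independence of $\{h_\alpha\}$, I can reduce the assertion ``each $\boldsymbol{\partial}_{[S_1,S_2]}g_\alpha$ is semisimple'' to the single statement that the second derivative $\boldsymbol{\partial}_{[[S_1,S_2],T]}f$ is $(S_1|S_2|T)$-semisimple.

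To prove this I would combine two observations with a meet lemma. First, $\boldsymbol{\partial}_{[[S_1,S_2],T]}f=\boldsymbol{\partial}_{([S_1,S_2]\mid \mathrm{stick}_T)}(\boldsymbol{\partial}_{[S,T]}f)$ is $(S|T)$-semisimple, because $\boldsymbol{\partial}_{[S,T]}f$ is semisimple and differentiating along a forest that respects the blocks $S,T$ preserves this factorization (\autoref{prop:delayer}). Second, by the pre-Lie relation $\boldsymbol{\partial}_{[[S_1,S_2],T]}=\boldsymbol{\partial}_{[[S_1,T],S_2]}$ recorded in the proof of \autoref{thm:lie}, the same functional equals $\boldsymbol{\partial}_{([S_1,T]\mid \mathrm{stick}_{S_2})}(\boldsymbol{\partial}_{[S_1\cup T,\,S_2]}f)$, and since $\boldsymbol{\partial}_{[S_1\cup T,\,S_2]}f$ is a semisimple first derivative, the same argument makes it $(S_1\cup T\mid S_2)$-semisimple. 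The meet lemma I will use is: a functional that is simultaneously $R_a$- and $R_b$-semisimple is $(R_a\wedge R_b)$-semisimple. This follows from \autoref{cor}, since any wall that is not $(R_a\wedge R_b)$-semisimple must fail to be $R_a$-semisimple or $R_b$-semisimple (its reduction cannot lie in a single block of both $R_a$ and $R_b$); hence a functional constant across all non-$R_a$- and all non-$R_b$-semisimple walls is constant across every non-$(R_a\wedge R_b)$-semisimple wall. As $(S|T)\wedge(S_1\cup T\mid S_2)=(S_1|S_2|T)$, this gives precisely the required semisimplicity and closes the induction.

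The displayed identity then follows formally: the theorem identifies $\oS^\ast[I]$ with the space of functionals having semisimple first derivatives, which by the discussion preceding the theorem is the annihilator of $\textbf{Stein}[I]$ in $\Hom(\textbf{L}^\vee[I],\Bbbk)$; dualizing and using the double-annihilator equality in finite dimensions yields $\oS[I]=\textbf{L}^\vee[I]/\textbf{Stein}[I]$. I expect the main obstacle to be the meet lemma together with the bookkeeping around it---verifying that differentiating a semisimple functional along a block-respecting forest preserves the relevant factorization, that the two expressions for $\boldsymbol{\partial}_{[[S_1,S_2],T]}f$ supplied by the pre-Lie relation genuinely witness semisimplicity for the two partitions whose meet is $(S_1|S_2|T)$, and that the reduction to a single second derivative via linear independence of $\{h_\alpha\}$ is carried out correctly.
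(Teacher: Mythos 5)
Your overall architecture is close in spirit to the paper's: reduce to second derivatives, factor the first derivative $\boldsymbol{\partial}_{[S,T]}f$ through $\boldsymbol{\mu}_{(S|T)}$ via \autoref{prop:delayer}, and exploit the other coarsenings of $(S_1|S_2|T)$. But the step that does all the work is false: the operator identity $\boldsymbol{\partial}_{[[S_1,S_2],T]}=\boldsymbol{\partial}_{[[S_1,T],S_2]}$ does not hold. The pre-Lie relations recorded in the proof of \autoref{thm:lie} are identities for the \emph{unantisymmetrized} operation $\mathtt{X}\mapsto\mathtt{X}^{\mathcal{F}}$, e.g.\ $\mathtt{X}^{[[S,T],U]}=\mathtt{X}^{[[S,U],T]}$, whereas the derivative is $\boldsymbol{\partial}_{\mathcal{F}}f(\mathtt{X})=f(\mathtt{X}^{\cA_I(\mathcal{F})})$, and of the four terms of $\cA_I([[S,T],U])$ only the leading one is matched under the pre-Lie exchange. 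Concretely, for $I=\{1,2,3\}$ and $\mathtt{X}$ the origin one computes
\[\boldsymbol{\partial}^\ast_{[[1,2],3]}\mathtt{X}=\mathtt{X}_{(+,-,-)}-\mathtt{X}_{(-,+,-)}-\mathtt{X}_{(+,-,+)}+\mathtt{X}_{(-,+,+)},\qquad \boldsymbol{\partial}^\ast_{[[1,3],2]}\mathtt{X}=\mathtt{X}_{(+,-,-)}-\mathtt{X}_{(-,-,+)}-\mathtt{X}_{(+,+,-)}+\mathtt{X}_{(-,+,+)},\]
which share only two of their four chambers. More structurally: if your identity held, then combined with antisymmetry ($\boldsymbol{\partial}_{[[S_1,T],S_2]}=-\boldsymbol{\partial}_{[[T,S_1],S_2]}$) and the Jacobi identity of \autoref{thm:lie} it would force every second derivative to vanish identically, which is absurd. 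Consequently your second observation --- that $\boldsymbol{\partial}_{[[S_1,S_2],T]}f$ is $(S_1\cup T\,|\,S_2)$-semisimple --- is unsupported, and with it the application of your (correct) meet lemma.

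What the paper does at this point is genuinely weaker and is all that is true. It writes $\boldsymbol{\partial}_{[[S,T],U]}=-\boldsymbol{\partial}_{[[U,S],T]}-\boldsymbol{\partial}_{[[T,U],S]}$ by Jacobi, supposes for contradiction that $\boldsymbol{\partial}_{[[S,T],U]}f$ separates two specific lifted shards $\mathtt{Y}_1,\mathtt{Y}_2$ with $\boldsymbol{\Delta}_P(\mathtt{Y}_i)=\mathtt{X}_i\otimes\mathtt{Z}$, and then shows that each of the two terms on the right takes \emph{equal values on that particular pair}, because $\boldsymbol{\partial}_{[U\sqcup S,T]}f$ and $\boldsymbol{\partial}_{[T\sqcup U,S]}f$ are semisimple first derivatives and the relevant projections of $\mathtt{Y}^{[U,S]}_1,\mathtt{Y}^{[U,S]}_2$ (and likewise for $[T,U]$) coincide. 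Neither summand is claimed to be globally semisimple for a coarser partition; only the sum is controlled, and only on the chosen pair of test shards. To rescue your route you would need an additional lemma of the shape: a functional that is $(S|T)$-semisimple and equals a sum of an $(S_1\cup T|S_2)$-semisimple and an $(S_2\cup T|S_1)$-semisimple functional is $(S_1|S_2|T)$-semisimple. Your proposal does not supply this, so as written the induction does not close.
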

\begin{proof}
Let us assume that $f \in {\textbf{\text{L}}^\vee}^\ast[I]$ has semisimple first derivatives, i.e. $\boldsymbol{\partial} _\mathcal{V} f$ is semisimple for all cuts $\mathcal{V}$ of $I$. Consider a second derivative of $f$, i.e. a first derivative of some $\boldsymbol{\partial} _\mathcal{V} f$. By antisymmetry, up to sign this second derivative will be of the form 
\[\boldsymbol{\partial} _{ [[S,T ], U  ]  } f, \qquad    \text{for some} \quad  S\sqcup T\sqcup U=I.\] 
Let $Q$ and $P$ be the partitions
\[        Q = (S|T|U) \qquad \text{and}\qquad     P=           (S\sqcup T|U)  .        \]
By linearity, it is enough to consider the case when the first derivative $\boldsymbol{\partial} _{     [ S\sqcup T, U]    } f$ is a product, so let 
\[\boldsymbol{\partial} _{     [ S\sqcup T, U]    } f=\textbf{prod}_P(f_1\otimes f_2).\] 
Then,
\[\boldsymbol{\partial} _{ [[S,T ], U  ]  } f  = \underbrace{\boldsymbol{\partial} _{[S,T]}\ \textbf{prod}_P (f_1 \otimes f_2)  = \textbf{prod}_P(  \boldsymbol{\partial} _{[S,T]} f_1  \otimes f_2 )}_{ \text{\autoref{prop:delayer}} }   .\] 
Let $P_{S|T}$ be the partition of $I$ which is the completion of the blocks $S$ and $T$ with singletons, so that
\[
\boldsymbol{\partial} _{[S,T]} f_1\in  {\textbf{\textsf{L}}^\vee}^\ast[P_{S|T}]    
.\] 
Towards a contradiction, suppose that $\boldsymbol{\partial} _{ [[S,T ], U  ]  } f$ is not semisimple. A product of semisimple functionals is clearly semisimple. Therefore, since $f_2$ is simple, we have that $\boldsymbol{\partial} _{[S,T]} f_1$ is not semisimple. So there exist adjoint faces $\mathtt{X}_1,\mathtt{X}_2\in \textbf{\textsf{L}}^\vee[P_{S|T}]$ with
\[          \textbf{proj}_{P_{S|T}} (\mathtt{X}_1)   =  \textbf{proj}_{P_{S|T}} (\mathtt{X}_2)  \qquad \text{and} \qquad  \boldsymbol{\partial} _{[S,T]} f_1(\mathtt{X}_1) \neq \boldsymbol{\partial} _{[S,T]} f_1(\mathtt{X}_2).            \] 
We must have $f_2\neq 0$, because otherwise $\boldsymbol{\partial} _{ [[S,T ], U  ]  } f=0$, which is trivially semisimple. So let $\mathtt{Z}$ be any adjoint face such that $f_2(\mathtt{Z})\neq 0$. Let $\mathtt{Y}_1,\mathtt{Y}_2\in \textbf{\textsf{L}}^\vee[Q]$ be adjoint faces such that
\[     \textbf{proj}_P(\mathtt{Y}_1)= \mathtt{X}_1 \otimes \mathtt{Z}   \qquad \text{and} \qquad  \textbf{proj}_P(\mathtt{Y}_2)= \mathtt{X}_2 \otimes \mathtt{Z} .    \]  
Then  
\[ \boldsymbol{\partial} _{ [[S,T ], U  ]  } f(\mathtt{Y}_1)=   \boldsymbol{\partial} _{[S,T]} f_1(\mathtt{X}_1) \cdot f_2(\mathtt{Z})\neq  \boldsymbol{\partial} _{[S,T]} f_1(\mathtt{X}_2) \cdot f_2(\mathtt{Z})=\boldsymbol{\partial} _{ [[S,T ], U  ]  } f(\mathtt{Y}_2).\] 
Recall that the derivative satisfies the Jacobi identity, and so
\[               \boldsymbol{\partial} _{ [[S,T ], U  ]  } =-\boldsymbol{\partial} _{ [[U,S ], T  ] } -\boldsymbol{\partial} _{ [[T,U], S  ]  }   .         \]
Therefore, we have
\begin{equation} \label{eqst} \tag{$1$}
 ( -\boldsymbol{\partial} _{ [[U,S ], T  ]  } -\boldsymbol{\partial} _{ [[T,U ], S ]  } )f (\mathtt{Y}_1)  \neq ( -\boldsymbol{\partial} _{ [[U,S ], T  ] } -\boldsymbol{\partial} _{ [[T,U ], S ]  } )f (\mathtt{Y}_2) . 
\end{equation}
However, by the definition of the derivative, for $\boldsymbol{\partial} _{ [[U,S ], T  ]  }$ we have
\begin{equation} \label{eqst2} \tag{$2$}   
\boldsymbol{\partial} _{ [[U,S ], T ]  } f (\mathtt{Y}_1) =     \boldsymbol{\partial} _{ [  U\sqcup S , T  ]  } f (\mathtt{Y}^{[U,S ]}_1) -  \boldsymbol{\partial} _{ [  U\sqcup S , T  ]  } f (\mathtt{Y}^{[S,U ]}_1)     
\end{equation}
and 
\begin{equation} \label{eqst3} \tag{$3$}  
\boldsymbol{\partial} _{ [[U,S ], T  ]  } f (\mathtt{Y}_2) =     \boldsymbol{\partial} _{ [  U\sqcup S , T  ]  } f (\mathtt{Y}^{[U,S ]}_2) -  \boldsymbol{\partial} _{ [  U\sqcup S , T  ]  } f (\mathtt{Y}^{[S,U]}_2).      
\end{equation}
Let $P_{US}$ denote the partition $(U\sqcup S|T)$. In particular, we have $\boldsymbol{\partial} _{ [ U\sqcup S , T  ]  } f\in    {\textbf{\textsf{L}}^\vee}^\ast[P_{US}]$. Notice that
\[      \textbf{proj}_{P_{US}}(  \mathtt{Y}^{[U,S ]}_1)= \textbf{proj}_{P_{US}}  (      \mathtt{Y}^{[U,S ]}_2   ) \qquad \text{and} \qquad   \textbf{proj}_{P_{US}} ( \mathtt{Y}^{[S,U ]}_1)=  \textbf{proj}_{P_{US}}(\mathtt{Y}^{[S,U ]}_2).  \]
Then, since $\boldsymbol{\partial} _{ [ U\sqcup S , T  ]  } f$ is a first derivative of $f$ and so must be semisimple, we have
\[    \boldsymbol{\partial} _{ [ U\sqcup S , T  ]  } f(\mathtt{Y}^{[U,S ]}_1 ) = \boldsymbol{\partial} _{ [ U\sqcup S , T  ]  } f(\mathtt{Y}^{[U,S ]}_2) \qquad \text{and} \qquad    \boldsymbol{\partial} _{ [ U\sqcup S , T  ]  } f(\mathtt{Y}^{[S,U ]}_1 ) = \boldsymbol{\partial} _{ [ U\sqcup S , T  ]  } f(\mathtt{Y}^{[S,U ]}_2) .       \]
Together with (\ref{eqst2}) and (\ref{eqst3}), this implies
\begin{equation} \label{eqst4} \tag{$\neg 1a$}  
\boldsymbol{\partial} _{ [[U,S ], T  ]  } f (\mathtt{Y}_1)= \boldsymbol{\partial} _{ [[U,S ], T  ]  } f (\mathtt{Y}_2).            
\end{equation}
The following similar equality for $\boldsymbol{\partial} _{ [[T,U ], S  ]  } $ is obtained by the same method,
\begin{equation} \label{eqst5} \tag{$\neg 1b$}  
\boldsymbol{\partial} _{ [[T,U ], S  ]  } f (\mathtt{Y}_1)= \boldsymbol{\partial} _{ [[T,U ], S  ]  }  f (\mathtt{Y}_2).    
\end{equation}
Then (\ref{eqst4}) and (\ref{eqst5}) contradict (\ref{eqst}), and so $\boldsymbol{\partial} _{ [[S,T ], U  ]  } f$ must be semisimple. Thus, we have shown that if all the first derivatives of $f$ are semisimple, then all the second derivatives of $f$ are semisimple. The result then follows by induction on the order of the derivative. 
\end{proof}   

In \cite{oc17}, Ocneanu gave an interesting alternative proof of this result for the special case $n\leq 5$, which features an analysis of the structure of adjoint faces in five coordinates. This proof may generalize to all $n$. 

\begin{cor}
Let $P$ be a partition of $I$, and let $f \in {\textbf{\textsf{L}}^\vee}^\ast[P]$ be a functional on adjoint faces. Then $f$ is semisimply differentiable if (and only if) $f$ is semisimple and has semisimple first derivatives. 
\end{cor}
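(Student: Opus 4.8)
The plan is to deduce this from \autoref{main} (the single-block case) together with the factorization isomorphism of \autoref{cor1}, so that all the real content is localized in each block of $P$. The forward implication requires no work: if $f$ is semisimply differentiable then $\boldsymbol{\partial}_\mathcal{F} f$ is semisimple for every forest $\mathcal{F}$ over $P$; taking $\mathcal{F}$ to be a forest of sticks recovers that $f$ itself is semisimple, and taking $\mathcal{F}$ to be a cut recovers that every first derivative of $f$ is semisimple.

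For the converse I would begin from the hypothesis that $f$ is semisimple and write $f=\boldsymbol{\mu}_P(\textbf{\textit{f}})$ for the unique $\textbf{\textit{f}}\in\bigotimes_j\Hom(\textbf{L}^\vee[S_j],\Bbbk)$ (using injectivity of $\boldsymbol{\mu}_P$). The goal is to promote $\textbf{\textit{f}}$ into $\bigotimes_j\oS^\ast[S_j]$, for then $f=\boldsymbol{\mu}_P(\textbf{\textit{f}})$ is semisimply differentiable by \autoref{cor1}. Fix a block $S_m$ and a cut $\mathcal{V}=[C,D]$ of $S_m$, and let $Q$ be the refinement of $P$ that splits $S_m$ into $C$ and $D$. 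By \autoref{prop:delayer}, $\boldsymbol{\partial}_\mathcal{V} f=\boldsymbol{\mu}_P(\otimes\boldsymbol{\partial}_\mathcal{V}\,\textbf{\textit{f}})$, where $\otimes\boldsymbol{\partial}_\mathcal{V}$ acts by $\boldsymbol{\partial}_{[C,D]}$ in the $m$-th slot and by the identity elsewhere; and, through the factorization isomorphism for $\boldsymbol{\mu}_P$, semisimplicity of $\boldsymbol{\partial}_\mathcal{V} f$ is precisely the condition that this $m$-th slot be a semisimple functional. The shard-separation argument already carried out in the proof of \autoref{cor1} — build $\mathtt{X}_1,\mathtt{X}_2$ with equal projection but unequal value, then lift them through nonzero factors $f_j$ to $\mathtt{Y}_1,\mathtt{Y}_2\in\textbf{L}_Q^\vee[I]$ with $\boldsymbol{\Delta}_Q(\mathtt{Y}_1)=\boldsymbol{\Delta}_Q(\mathtt{Y}_2)$ yet $\boldsymbol{\partial}_\mathcal{V} f(\mathtt{Y}_1)\neq\boldsymbol{\partial}_\mathcal{V} f(\mathtt{Y}_2)$ — shows this forces $\boldsymbol{\partial}_{[C,D]}$, applied in slot $m$, already to be semisimple. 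Ranging over all cuts of every block then confines $\textbf{\textit{f}}$ to $\bigotimes_j W_j$, where $W_j\subseteq\Hom(\textbf{L}^\vee[S_j],\Bbbk)$ is the subspace of functionals all of whose first derivatives are semisimple. Finally \autoref{main}, applied with ground set $S_j$, identifies $W_j$ with $\oS^\ast[S_j]$, yielding $\textbf{\textit{f}}\in\bigotimes_j\oS^\ast[S_j]$ and completing the argument.

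The hard part will be the tensor bookkeeping hidden in the phrase ``confines $\textbf{\textit{f}}$ to $\bigotimes_j W_j$'', since $\textbf{\textit{f}}$ is in general a superposition of products while the \autoref{cor1} argument is phrased for a single product. I would make this precise using exactness of $-\otimes-$ over the field $\Bbbk$: encoding the non-semisimple component of $\boldsymbol{\partial}_{[C,D]}$ as a linear map $\phi_{[C,D]}$ into a cokernel, the condition ``$\boldsymbol{\partial}_\mathcal{V} f$ semisimple'' becomes $\textbf{\textit{f}}\in\ker(\id\otimes\cdots\otimes\phi_{[C,D]}\otimes\cdots\otimes\id)=\bigotimes_{j\neq m}\Hom(\textbf{L}^\vee[S_j],\Bbbk)\otimes\ker\phi_{[C,D]}$, and the elementary identity $\bigcap_m\big(\bigotimes_{j\neq m}V_j\otimes W_m\big)=\bigotimes_j W_j$ for subspaces $W_j\subseteq V_j$ (proved by choosing complements) collapses the intersection over all cuts to exactly $\bigotimes_j W_j$. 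Everything else is a direct appeal to the results already established.
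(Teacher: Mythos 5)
Your argument is correct and takes the same route as the paper, whose entire proof of this corollary is the remark that it follows from \autoref{cor1} and \autoref{main}. You have in fact supplied more detail than the paper does --- notably the tensor-algebra bookkeeping (exactness of $-\otimes-$ over $\Bbbk$ and the identity $\bigcap_m\bigl(\bigotimes_{j\neq m}V_j\otimes W_m\bigr)=\bigotimes_j W_j$) needed to pass from a superposition of products to $\bigotimes_j \oS^\ast[S_j]$ --- and that extra care is sound.
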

\begin{proof}
This follows from \autoref{cor1} and \autoref{main}.
\end{proof}

 
\section{A Lie Algebra in Species} \label{sec:coalg}

\noindent \hyperlink{foo}{We} now show that the forest derivative of semisimply differentiable functionals has the structure of a comodule of the Lie cooperad, internal to the category of species. Dually, this endows the adjoint braid arrangement modulo the Steinmann relations with the structure of a Lie algebra in species. Throughout this section, we follow \cite[Appendix B]{aguiar2010monoidal} for species and operads. See also \cite[Section 1.8]{operads}, \cite[Section 5.2]{lodaybook}.

\subsection{The Structure Maps}

By \autoref{cor1}, we can restrict the product $\text{prod}_P$ from \autoref{sec:Functions on Shards} to obtain a bijection
\[           \text{prod}_P |_{\oS} :  \oS^\ast[S_1]\otimes \dots \otimes  \oS^\ast[S_k]\to             \oS^\ast[P]  , \qquad    f\mapsto  \text{prod}_P(f) . \]
We may view this as a geometric realization of the abstract tensor product, so that $\text{T}^\vee[I]$ simultaneously interprets $\oS^\ast[I]$ and $ \oS^\ast[S_1]\otimes \dots \otimes  \oS^\ast[S_k]$. Dually, we have the bijection
\[           \text{proj}_P |_{\oS} :    \oS[P] \to  \oS[S_1]\otimes \dots \otimes  \oS[S_k]       , \qquad    f\mapsto  \text{proj}_P(f) . \]
Similarly, we may view the inverse $(\text{proj}_P |_{\oS})^{-1}$ as a geometric realization. Given a forest $\mathcal{F}:P\leftarrow  Q$ where $P=(S_1|\dots |S_{k_1})$ and $Q=(T_1| \dots| T_{k_2})$, we define
\begin{align*}
  \partial_\mathcal{F}:   \bigotimes_j \oS^\ast[S_j]      \to \bigotimes_j \oS^\ast[T_j]&, \qquad       &&\partial_\mathcal{F}=     (\text{prod}_Q |_{\oS})^{-1} \circ   \boldsymbol{\partial}_\mathcal{F} \circ \text{prod}_P |_{\oS}  .  \\ \intertext{This composition says; geometrically realize the abstract tensor product on the adjoint braid arrangement, perform the forest derivative of functionals on adjoint faces, and then undo the geometric realization to get back an abstract tensor product. The linear dual of $\partial_\mathcal{F}$ is given by }
 \partial^\ast_\mathcal{F}:   \bigotimes_j \oS[T_j]      \to \bigotimes_j \oS[S_j]&, \qquad       &&\partial^\ast_\mathcal{F}=   \text{proj}_P |_{\oS} \circ  \boldsymbol{\partial}^\ast_\mathcal{F} \circ   (\text{proj}_Q |_{\oS})^{-1} .\\ \intertext{In particular, if $\mathcal{F}=\mathcal{T}:I\leftarrow  Q$ is a tree, then we obtain} 
  \partial_\mathcal{T}:  \oS^\ast[I]      \to  \oS^\ast[T_1]\otimes \dots \otimes \oS^\ast[T_k]&  ,      &&\partial_\mathcal{T}=     (\text{prod}_Q |_{\oS})^{-1} \circ   \boldsymbol{\partial}_\mathcal{T}   \\ \intertext{ and }
 \partial^\ast_\mathcal{T}:  \oS[T_1]\otimes \dots\otimes  \oS[T_k]   \to \oS[I]&,    &&\partial^\ast_\mathcal{T}=  \boldsymbol{\partial}^\ast_\mathcal{T} \circ   (\text{proj}_Q |_{\oS})^{-1} .
\end{align*}

\subsection{A Lie Algebra in Species} For background on species, see \cite[Chapter 8]{aguiar2010monoidal}. Let $\textsf{S}$ denote the category of finite sets and bijections, and let $\textsf{Vec}$ denote the category of vector spaces over $\Bbbk$. We have the vector species $\Lay$ of layered trees, 
\[\textbf{Lay}: \textsf{S}  \to \textsf{Vec}, \qquad  I\mapsto  \textbf{Lay}[I].  \] 
We also have the vector species $\oS$ of adjoint chambers modulo the Steinmann relations,
\[\oS  :  \textsf{S} \to \textsf{Vec}, \qquad  I\mapsto  \oS[I] . \] 
We let both species be positive by convention, i.e. $\textbf{Lay}[\emptyset]=0$ and $\oS[\emptyset]=0$. We denote the respective dual species by $\textbf{Lay}^\ast$ and $\oS^\ast$. The category of species is equipped with a monoidal product called \emph{composition} or plethysm, denoted by $\circ$. Monoids internal to species, constructed with respect to composition, are operads by another name. Since $\oS^\ast$ is a positive species, the composition of $\textbf{Lay}^\ast$ with $\oS^\ast$ is given by
\[        \textbf{Lay}^\ast \circ \oS^\ast [I]=    \bigoplus_{P} \Big (       {\textbf{Lay}^\ast}[P] \otimes \bigotimes_j \oS^\ast[ S_j ]        \Big ) . \footnote{\ the direct sum is over all partitions $P$ of $I$}    \]
For each tree $\mathcal{T}\in \textbf{Lay} [P]$, let $\mathcal{T}^\ast\in \textbf{Lay}^\ast [P]$ be given by 
\[\mathcal{T}^\ast(\mathcal{T}'):= \delta_{\mathcal{T},\mathcal{T}'} \qquad  \text{for all trees}\quad \mathcal{T}'\in \textbf{Lay} [P]. \] 
For $f\in  \oS^\ast[I]$, let
\[   \gamma_P(  f   ) :=   \sum_{\mathcal{T} \in  \textbf{Lay}[P]    }  \mathcal{T}^\ast \otimes  \partial_\mathcal{T} f. \]
Let
\[        \gamma:     \oS^\ast  \to    \textbf{Lay}^\ast \circ \oS^\ast  ,\qquad     \gamma(f):= \bigoplus_{P} \gamma_P(f).\]
These linear maps are natural, and so define a morphism of species. Let $\Lie$ denote the (positive) Lie operad, represented using layered trees quotiented by the relations of antisymmetry, the Jacobi identity, and delayering. Our representation of $\Lie$ induces an embedding $\textbf{Lie}^\ast  \hookrightarrow                \textbf{Lay}^\ast$, which in turn induces an embedding $\textbf{Lie}^\ast \circ \oS^\ast \hookrightarrow                \textbf{Lay}^\ast \circ \oS^\ast$.

\begin{prop}
The image of $\gamma$ is contained in the image of $\textbf{Lie}^\ast \circ \oS^\ast \hookrightarrow \textbf{Lay}^\ast \circ \oS^\ast$. 
\end{prop}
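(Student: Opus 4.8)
The plan is to reduce the statement to the fact---already proved in \autoref{thm:lie} and \autoref{cor:delay}---that, on a semisimply differentiable functional, the forest derivative respects antisymmetry, the Jacobi identity, and delayering, and therefore descends along the quotient $\textbf{Lay}\twoheadrightarrow\textbf{Lie}$. Fix $I$ and $f\in\oS^\ast[I]$. Since $\gamma(f)=\bigoplus_P\gamma_P(f)$ and the map $\textbf{Lie}^\ast\circ\oS^\ast\hookrightarrow\textbf{Lay}^\ast\circ\oS^\ast$ is the direct sum over partitions $P=(S_1|\dots|S_k)$ of the maps $\textbf{Lie}^\ast[P]\otimes\bigotimes_j\oS^\ast[S_j]\hookrightarrow\textbf{Lay}^\ast[P]\otimes\bigotimes_j\oS^\ast[S_j]$, it suffices to show, for each $P$, that $\gamma_P(f)$ lies in the image of the $P$-summand. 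Writing $V:=\bigotimes_j\oS^\ast[S_j]$ and using that $\textbf{Lay}[P]$ is finite-dimensional, I would identify $\textbf{Lay}^\ast[P]\otimes V\cong\Hom(\textbf{Lay}[P],V)$, under which $\mathcal{T}^\ast\otimes v$ corresponds to $\mathcal{T}'\mapsto\delta_{\mathcal{T},\mathcal{T}'}\,v$; thus $\gamma_P(f)$ corresponds to the linear map $\Phi_f\colon\textbf{Lay}[P]\to V$, $\mathcal{T}\mapsto\partial_\mathcal{T}f$. Under the same identification, the subspace $\textbf{Lie}^\ast[P]\otimes V\cong\Hom(\textbf{Lie}[P],V)$ embeds as precomposition with the quotient $q\colon\textbf{Lay}[P]\twoheadrightarrow\textbf{Lie}[P]$, so its image is exactly $\{\Phi : \Phi|_{\ker q}=0\}$. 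Hence the claim is equivalent to $\Phi_f(\ker q)=0$, where $\ker q$ is, by the definition of $\textbf{Lie}$, the span of the antisymmetry, Jacobi, and delayering relations among layered trees over the blocks of $P$.

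Next I would reduce from $\partial_\mathcal{T}$ back to $\boldsymbol{\partial}_\mathcal{T}$. By the structure-map formula, $\partial_\mathcal{T}=(\mu_P|_{\oS})^{-1}\circ\boldsymbol{\partial}_\mathcal{T}$ for every $\mathcal{T}\in\textbf{Lay}[P]$, and crucially the isomorphism $(\mu_P|_{\oS})^{-1}$ depends only on $P$, not on the tree $\mathcal{T}$. Therefore $\Phi_f=(\mu_P|_{\oS})^{-1}\circ\Psi_f$, where $\Psi_f\colon\mathcal{T}\mapsto\boldsymbol{\partial}_\mathcal{T}f$, and since $(\mu_P|_{\oS})^{-1}$ is injective it is enough to show $\Psi_f(\ker q)=0$. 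For the antisymmetry and Jacobi generators this is precisely \autoref{thm:lie}, which gives $\boldsymbol{\partial}_{[\mathcal{T}_1,\mathcal{T}_2]}f+\boldsymbol{\partial}_{[\mathcal{T}_2,\mathcal{T}_1]}f=0$ and the three-term relation for all $f$; for the delayering generators it is \autoref{cor:delay}, which says that because $f$ is semisimply differentiable, $\boldsymbol{\partial}_\mathcal{F}f$ is unchanged under relayering, so $\Psi_f$ kills every difference of two layerings of a fixed underlying tree. Thus $\Psi_f$, and hence $\Phi_f$, annihilates $\ker q$; summing over $P$ yields $\gamma(f)\in\textbf{Lie}^\ast\circ\oS^\ast[I]$.

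The main obstacle---really the only point requiring care---is the linear-algebra identification in the first step: one must verify that the embedding $\textbf{Lie}^\ast[P]\otimes V\hookrightarrow\textbf{Lay}^\ast[P]\otimes V$ corresponds, under $\textbf{Lay}^\ast[P]\otimes V\cong\Hom(\textbf{Lay}[P],V)$, to the maps vanishing on $\ker q$. This uses the finite-dimensionality of the operadic factor $\textbf{Lay}[P]$ in an essential way (the analogous statement fails for the infinite-dimensional factor $V$, which is precisely why the relations are imposed only on the $\textbf{Lay}$-component). Once this identification is granted, no computation with shards is needed: the required vanishing is immediate from \autoref{thm:lie} together with the delayering invariance of \autoref{cor:delay}.
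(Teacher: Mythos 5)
Your proposal is correct and follows essentially the same route as the paper, whose proof consists of the single sentence that the claim is a direct consequence of \autoref{thm:lie} and \autoref{cor:delay}; you have simply made explicit the linear-algebra bookkeeping (identifying $\textbf{Lay}^\ast[P]\otimes V$ with $\Hom(\textbf{Lay}[P],V)$ and characterizing the image of the $\textbf{Lie}^\ast$-embedding as the maps killing $\ker q$) that the paper leaves implicit. The reduction to showing that $\mathcal{T}\mapsto\boldsymbol{\partial}_\mathcal{T}f$ annihilates the antisymmetry, Jacobi, and delayering relations is exactly what those two results supply.
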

\begin{proof}
This is a direct consequence of \autoref{thm:lie} and \autoref{cor:delay}.
\end{proof}


By restricting the image of $\gamma$, we obtain
\[        \gamma|_{\Lie}:     \oS^\ast  \to    \textbf{Lie}^\ast \circ \oS^\ast,\qquad    f\mapsto  \bigoplus_{P} \gamma_P(f)    .\]
We then take the dual of $\gamma|_{\Lie}$, to obtain
\[    \gamma|_{\Lie}^\ast:   \Lie \circ \oS\to \oS, \qquad      \mathcal{T}\otimes  \mathtt{D}\mapsto \partial_\mathcal{T}^\ast\,    \mathtt{D}   . \] 

\begin{thm}\label{mod}
The morphism $\gamma^\ast|_{\Lie}$ is a left $\Lie$-module.
\end{thm}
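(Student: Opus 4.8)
The plan is to verify that $\gamma|_{\Lie}^\ast$ satisfies the two axioms of a left module over the operad $\Lie$ — compatibility with the operadic multiplication $\Lie\circ\Lie\to\Lie$, and unitality. By linear duality it is equivalent, and far more natural given the explicit formula for $\gamma$, to show instead that $\gamma|_{\Lie}\colon\oS^\ast\to\textbf{Lie}^\ast\circ\oS^\ast$ is a left comodule of the cooperad $\textbf{Lie}^\ast$; dualizing a comodule over a cooperad yields a module over the operad. So I would check coassociativity and counitality of $\gamma|_{\Lie}$. The counit axiom is immediate: the counit of $\textbf{Lie}^\ast$ is dual to the operad unit, which in the layered-tree model is the stick, and $\boldsymbol{\partial}_{\text{stick}}=\id$ forces $\partial_{\text{stick}}=\id$, so the counit composite is the identity on $\oS^\ast$.

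The heart of the argument is coassociativity, which I would reduce to the functoriality of the forest derivative. Recall that $\partial_\mathcal{F}$ is $\boldsymbol{\partial}_\mathcal{F}$ conjugated by the product isomorphisms of \autoref{cor1}, that $\boldsymbol{\partial}$ respects forest composition, and that \autoref{prop:delayer} gives the factorization of the derivative of a product; combining these yields the key identity
\[
\partial_{\mathcal{T}\circ(\mathcal{T}_1|\dots|\mathcal{T}_k)}=(\partial_{\mathcal{T}_1}\otimes\dots\otimes\partial_{\mathcal{T}_k})\circ\partial_\mathcal{T}
\]
for a tree $\mathcal{T}$ over a partition $P=(S_1|\dots|S_k)$ and trees $\mathcal{T}_j$ over the blocks $S_j$. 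I would then unwind both legs of the coassociativity square on $f\in\oS^\ast[I]$. The composite $(\id\circ\,\gamma|_{\Lie})\circ\gamma|_{\Lie}$ applies $\gamma$ once to produce an outer tree $\mathcal{T}$ with $\partial_\mathcal{T}f$, then applies $\gamma$ inside each tensor factor to produce inner trees $\mathcal{T}_j$; by the identity above its $\oS^\ast$-component is $\partial_{\mathcal{T}\circ(\mathcal{T}_1|\dots|\mathcal{T}_k)}f$, indexed by pairs (outer tree, inner forest). The composite $(\Delta_{\textbf{Lie}^\ast}\circ\id)\circ\gamma|_{\Lie}$ applies $\gamma$ to produce a single tree $\mathcal{S}$ with $\partial_\mathcal{S}f$, then splits $\mathcal{S}^\ast$ via the cooperad comultiplication. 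Since this comultiplication is by definition dual to grafting, i.e.\ to forest composition, and each grafted tree $\mathcal{S}=\mathcal{T}\circ(\mathcal{T}_1|\dots|\mathcal{T}_k)$ arises from a unique such pair, the two index sets are identified; the $\oS^\ast$-components agree because $\partial_\mathcal{S}=\partial_{\mathcal{T}\circ(\mathcal{T}_1|\dots|\mathcal{T}_k)}$. This is precisely the associativity of composition in the category of partitions, transported through the functor $\mathcal{F}\mapsto\partial_\mathcal{F}$.

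The one point requiring care — and the main obstacle — is the passage to the Lie quotient. The formula for $\gamma$ sums over all \emph{layered} trees $\mathcal{T}\in\textbf{Lay}[P]$, and the identities above are stated for layered forests, whereas $\textbf{Lie}^\ast$ is the quotient of layered trees by antisymmetry, the Jacobi identity, and delayering. I would argue that $\gamma$ already factors through $\textbf{Lie}^\ast\circ\oS^\ast$ by the proposition preceding the theorem (which rests on \autoref{thm:lie} and \autoref{cor:delay}), so that both the inner and outer applications of $\gamma|_{\Lie}$ respect these relations and the matching of splittings descends to the quotient. Here \autoref{cor:delay} is essential: it guarantees that on semisimply differentiable functionals the forest derivative is independent of the layering, which is exactly what makes the delayering relation in $\Lie$ compatible with the structure map. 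Thus the combinatorial identification of the grafting decomposition with the cooperad comultiplication of $\textbf{Lie}^\ast$ is legitimate modulo the Lie relations, rather than only on the nose for layered trees, and coassociativity holds in $\textbf{Lie}^\ast\circ\textbf{Lie}^\ast\circ\oS^\ast$. Dualizing the resulting comodule then gives the claimed left $\Lie$-module structure on $\oS$.
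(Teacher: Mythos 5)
Your proposal is correct and takes essentially the same approach as the paper: both arguments rest on the facts that $\boldsymbol{\partial}_{\mathcal{F}}$ is the identity on forests of sticks and respects forest composition, combined with \autoref{prop:delayer} for the factorization across blocks and \autoref{cor:delay} for independence of layering. The paper simply verifies the module axioms directly on $\gamma^\ast|_{\Lie}$ in one displayed line, whereas you verify the dual comodule axioms for $\gamma|_{\Lie}$ and then dualize --- the same computation read in the opposite direction.
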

\begin{proof}
The unit of the Lie operad is the stick. The fact that $ \gamma^\ast|_{\Lie}$ is unital then follows from the fact that $\boldsymbol{\partial}_\mathcal{F}$ is the identity when $\mathcal{F}$ is a forest of sticks. The morphism $ \gamma^\ast|_{\Lie}$ is an action since
\[      \gamma^\ast|_{\Lie}\big(   (\mathcal{T} \circ  \mathcal{F}) \otimes   \mathtt{D}\big ) =              d^\ast_{ \mathcal{T} \circ \mathcal{F}  }\,   \mathtt{D}    =d^\ast_{  \mathcal{T}  }(   d^\ast_{  \mathcal{F}  }\,  \mathtt{D}  )             =\gamma^\ast|_{\Lie}\big (  \mathcal{T} \otimes (  d^\ast_{  \mathcal{F}  }\,  \mathtt{D} )\big).  \qedhere    \]
\end{proof}

\begin{cor}
The morphism $\gamma|_{\Lie}$ is a left $\Lie^\ast$-comodule. 
\end{cor}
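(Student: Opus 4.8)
The plan is to recognize this as the purely formal linear dual of \autoref{mod}, requiring no new geometric input. By construction $\gamma^\ast|_{\Lie}$ is the transpose of $\gamma|_{\Lie}$, and \autoref{mod} establishes that $\gamma^\ast|_{\Lie}$ is a left $\Lie$-module. A left $\Lie^\ast$-comodule structure on $\oS^\ast$ is a coaction $\oS^\ast \to \Lie^\ast \circ \oS^\ast$ subject to a counitality axiom and a coassociativity axiom, and these two conditions are precisely the arrow-reversals of the unitality and associativity axioms defining the left $\Lie$-module structure of $\gamma^\ast|_{\Lie}$. So the strategy is to transpose each of the two defining diagrams of \autoref{mod} and read off the comodule axioms.

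The one point that requires care, and which I would address first, is that linear duality must be compatible with the composition product $\circ$. I would note that every species involved, namely $\oS$, $\Lie$, and their duals, is finite-dimensional in each arity: $\oS[I]$ is a quotient of the finitely generated space $\textbf{L}^\vee[I]$, and $\Lie[I]$ is finite-dimensional. Because the plethysm $\Lie \circ \oS$ is built from finite direct sums over partitions and finite tensor products of these arity-finite pieces, linear duality is involutive and yields a natural isomorphism $(\Lie \circ \oS)^\ast \cong \Lie^\ast \circ \oS^\ast$. Under this isomorphism the cooperad comultiplication of $\Lie^\ast$ is exactly the transpose of the operadic composition of $\Lie$, and the counit of $\Lie^\ast$ is the transpose of the unit (the stick).

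With this compatibility in hand, the remaining steps are routine diagram transposition. Transposing the unit axiom for $\gamma^\ast|_{\Lie}$ -- the statement that precomposing the action with the operad unit recovers the identity -- yields the counit axiom for $\gamma|_{\Lie}$, which once more reduces to the fact that $\boldsymbol{\partial}_\mathcal{F}$ is the identity on forests of sticks. Transposing the associativity square, which expresses that the two ways of contracting $\Lie \circ \Lie \circ \oS \to \oS$ agree, yields the coassociativity square for $\gamma|_{\Lie}$. I expect the main (and essentially only) obstacle to be the verification that duality commutes with $\circ$; once arity-finiteness is invoked this is standard, and everything else is formal.
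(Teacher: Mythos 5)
Your proposal is correct and takes essentially the same route as the paper, which simply states that the corollary is the dual of \autoref{mod}. The extra care you take in justifying that linear duality commutes with the plethysm $\circ$ (via arity-finiteness of $\oS$ and $\Lie$) is a standard point the paper leaves implicit, and your transposition of the unit and associativity axioms matches the intended argument.
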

\begin{proof}
This is the dual of \autoref{mod}. 
\end{proof}

Left $\Lie$-modules in species with respect to composition are equivalent to Lie algebras in species with respect to the Day convolution, denoted $\otimes_{\text{Day}}$ \cite[Appendix  B.5]{aguiar2010monoidal}. The Lie algebra corresponding to $\gamma^\ast|_{\Lie}$ is given by
\[  \partial^\ast:  \oS \otimes_{\text{Day}} \oS\to  \oS     , \qquad      \mathtt{D} \mapsto \partial^\ast_{[S,T]}\,  \mathtt{D}   , \]
where $\mathtt{D}\in \oS[S]\otimes \oS[T]$. Its dual Lie coalgebra has cobracket the discrete differentiation of functionals across special hyperplanes,
\[   \partial: \oS^\ast \to \oS^\ast \otimes_{\text{Day}} \oS^\ast    , \qquad        f\mapsto    \partial_{ [S,T]}  f   .  \] 

\bibliographystyle{alpha}
\bibliography{steinmann}
\end{document}